\crefname{hypothesis}{Hypothesis}{Hypotheses}
\title{On Fractional Diffusion-Advection-Reaction Equation in $\mathbb{R}$\thanks{Submitted to the editors May, 2018.
\funding{Y. Li was partially supported by the UW Science Initiative Scholarship.}}}
\author{V. Ginting\thanks{Department of Mathematics \& Statistics, University of Wyoming, Laramie, WY 
  (\email{vginting@uwyo.edu}, \email{yli25@uwyo.edu}).}
\and Y. Li\footnotemark[2]}
\newtheorem{property}{Property}[section]
\newcommand{\RN}[1]{%
  \textup{\uppercase\expandafter{\romannumeral#1}}%
  }
\begin{document}
\begin{nolinenumbers}

\maketitle

% REQUIRED
\begin{abstract}
We present an analysis of existence, uniqueness, and smoothness of the solution to a class of fractional ordinary differential equations posed on the whole real line that models a steady state behavior of a certain anomalous diffusion, advection, and reaction. The anomalous diffusion is modeled by the fractional Riemann-Liouville differential operators. The strong solution of the equation is sought in a Sobolev space defined by means of Fourier Transform. The key component of the analysis hinges on a characterization of this Sobolev space with the Riemann-Liouville derivatives that are understood in a weak sense. The existence, uniqueness, and smoothness of the solution is demonstrated with the assistance of several tools from functional and harmonic analyses.
\end{abstract}

% REQUIRED
\begin{keywords}
Riemann-Liouville fractional operators, fractional diffusion, advection, reaction, weak fractional derivative, strong solution, regularity. 
\end{keywords}

% REQUIRED
\begin{AMS}
  26A33, 34A08, 46N20
\end{AMS}

%%%%%%%%%%%%%%%%%%%%%%
\section{Introduction}\label{sec:introduction}
%%%%%%%%%%%%%%%%%%%%%%
Fractional integral and differential operators and fractional differential equations have gained increasingly crucial role as useful tools for modeling various anomalous and nonlocal phenomena. By no means exhaustive, some of the applications include conservation of fluid in a porous medium \cite{wmawr08}, anomalous diffusion \cite{METZLER20001}, atmospheric advection-dispersion of pollutants \cite{GOULART20179}, continuum mechanics \cite{Mainardi1997}, and dynamics in financial markets \cite{SCALAS2000376}. 

Recent years have seen very active investigations on theoretical and numerical analysis of fractional differential equations. The existence of solutions to many types of fractional differential equations have been widely studied by using functional analytic approaches with some aiming at finding analytical/closed form solutions of the problems (see, e.g. \cite{MR2218073}, \cite{MR1347689}, \cite{MR3586266}). Using unctional analytic framework and variational formulations, several numerical schemes for approximating boundary value problems involving fractional differential equations were derived and analyzed (see e.g. \cite{ernmpde06}, \cite{wyzsinum14}, and \cite{JinLPR15}). Moreover, there has been a renewed interest on investigation of fractional Sturm-Liouville boundary value problems on unbounded domains \cite{hmmjcp15}.

Among the recurring themes in the aforementioned works is on the wellposedness of the problems under investigation. When posed on a bounded domain, typically a fractional differential equation must be provided with a set of boundary conditions. However, fractional integral and differential operators are inherently nonlocal, and in this regard, the choice of suitable and correct boundary settings to accompany the equation is not immediately clear. Other related topic is on the stability and regularity of the solution, namely, questions about the smoothness of the solution and how it depends on the data. A variety of issues on the wellposedness of the problems and solutions regularity was for example addressed in \cite{doi:10.1137/120892295, BAEUMER2018408, FDBD15}.

The subject of this paper is on the existence, uniqueness, and regularity of stationary fractional ordinary differential equation modeling a certain anomalous diffusion, advection, and reaction on the whole real line, in which the anomalous diffusion is modeled by the fractional Riemann-Liouville derivatives. One can associate this equation as a study of steady state behavior of a time dependent problem containing spatial fractional derivatives (see e.g. \cite{IZSAK201738} and \cite{BENSON2013479}). In giving a proper response, there are several inquiries to address, among which are: 1) What is a suitable functional space inside of which the solution of the equation is to be sought? 2) What should be a good setting to analyze the existence, smoothness, stability of solution?

The central thesis of the current investigation is that a class of fractional Sobolev spaces is a suitable "sandbox" to search for the solutions of the said fractional ordinary differential equations. In particular, we heavily utilize the Sobolev space that is defined by means of Fourier Transform. One of the main results is an ability to relate functions in this Sobolev space to functions whose Riemann-Liouville derivatives are understood in a weak sense. In fact, we show that the Sobolev space is equal to space of functions whose Riemann-Liouville derivatives are square integrable. Once this is in place, several tools from functional and harmonic analyses are employed to certify the existence and uniqueness of the strong solution of the equation. Furthermore, under an assumption of increasing smoothness of the data, the smoothness of the solution may be revamped as well.

The rest of the paper is organized as follows. An introduction to fractional Riemann-Liouville integral and differential operators and some of their relevant properties are presented in \Cref{sec:FRLO}. After listing several well-established results on Sobolev spaces of real-valued functions in $\mathbb{R}$, discussion in \Cref{sec:CFSS} is concentrated on a characterization of $\widehat{H}^{s}(\mathbb{R})$, a Sobolev space that is defined using Fourier Transform. It is achieved through the notion of weak fractional Riemann-Liouville derivatives, whose corresponding functional spaces are shown to be identical to $\widehat{H}^{s}(\mathbb{R})$. An application of the preceding framework to demonstrate existence and uniqueness of a strong solution to a fractional diffusion-advection-reaction in $\mathbb{R}$ (see \cref{eq:FODE}) is presented in \Cref{sec:SFDARE}. The analysis in this section includes the stability and regularity estimates of the solution. Conclusion and future works is presented in \Cref{sec:concl}. A list of frequently invoked theorems is given in \Cref{app:thm}.

Several notations, conventions, definitions, and related facts to be used throughout the paper are collected in this paragraph. We assume all the functions are real valued unless otherwise specified. For a given set $\Omega \subset \mathbb{R}$, we use characteristic function $\mathbbm{1}_{\Omega}\, w(x)$ to denote $\mathbbm{1}_{\Omega}\, w(x)=
\begin{cases}
w(x), \, x\in \Omega,\\
0,\, x\in \mathbb{R}\backslash \Omega,
\end{cases}
$
for any function $w$ defined in $\Omega$ (even though $w$ may not be defined on $\mathbb{R}\backslash \Omega$).
Let
\[
\| w \|_{L^p(\Omega)} :=
\begin{cases}
\displaystyle \left( \int_\Omega | w(x) |^p \, {\rm d} x \right)^{1/p}, ~~ \text{for}~ 1 \le p < \infty, \\
\displaystyle  \text{ess sup} \{ |w(x)| : x \in \Omega \}, ~~ \text{for} ~ p = \infty.
 \end{cases}
\]
The Lebesgue spaces $L^p(\Omega)$ is defined as $\{ w: \Omega \rightarrow \mathbb{R} :  \| w \|_{L^p(\Omega)}<\infty \}$. We note that $L^2(\Omega)$ is a Hilbert space and $(\cdot,\cdot)_{L^2(\Omega)}$ denotes its usual inner product that generates its norm $\| \cdot \|_{L^2(\Omega)}$. To simplify presentation, we use $(\cdot, \cdot)$ when $\Omega = \mathbb{R}$. $C_0^\infty(\mathbb{R})$ denotes the space of all infinitely differentiable functions with compact support in $\mathbb{R}$. $\mathbb{N}_0$ denotes the set of all non-negative integers. Convolution of two functions $v$ and $w$ is defined as $[v*w](t) = \int_{\mathbb{R}} v(x) w(t-x) \, {\rm d} x=\int_{\mathbb{R}} v(t-x) w(x) \, {\rm d} x$. Given $w:\mathbb{R} \rightarrow \mathbb{R}$, $[\mathcal{F}(w)](\xi) =\int_{\mathbb{R}} e^{-2\pi i x\xi}w(x) \, {\rm d}x$, for $\xi \in \mathbb{R}$, denotes the Fourier Transform of $w$. The notation $\widehat{w}$ denotes the Plancherel Transform of $w$ defined in \Cref{thm:PAR}, which coincides with $\mathcal{F}(w)$ if $w\in L^1(\mathbb{R})\cap L^2(\mathbb{R})$. The notation $w^\vee$ denotes  the inverse of Plancherel Transform. Given $h\in \mathbb{R}$, define the translation operator $\tau_h$ as $\tau_h w(x) = w(x-h)$. Also, given $\kappa>0$, define the dilation operator $\Pi_\kappa$ as $\Pi_\kappa w(x) = w(\kappa x)$. By appropriate change of variable,
$[\mathcal{F}(\tau_h w)](\xi) = e^{-2\pi i h\xi}[\mathcal{F}(w)](\xi)$, $[\mathcal{F}(\Pi_{-1} w)](\xi) = \overline{[\mathcal{F}(w)](\xi)}$, and $[\mathcal{F}(\Pi_\kappa w)](\xi) = \kappa^{-1}[\mathcal{F}(w)](\kappa^{-1}\xi)$ for $0 \ne \kappa \in \mathbb{R}$. Here $\overline{z}$ is the usual complex conjugate of $z \in \mathbb{C}$.

%%%%%%%%%%%%%%%%%%%%%%%%%%%%%%%%
\section{Fractional Riemann-Liouville Operators}\label{sec:FRLO}
%%%%%%%%%%%%%%%%%%%%%%%%%%%%%%%%
Definitions and several well-established facts about Riemann-Liouville (in short R-L) integrals and derivatives are laid out in this section, most of them without providing rigorous proofs. They have been recorded in various literatures, for which interested readers may refer to the specific references cited in the statements of the results.
%
%---------------------------------------------------------------------------------------------
\subsection{Fractional Riemann-Liouville Integrals and Their Properties}
\label{ssec:FRLI}
%---------------------------------------------------------------------------------------------
\begin{definition} \label{def:RLI}
Let $w:(a,b) \rightarrow \mathbb{R}, (a,b) \subset \mathbb{R}$ and  $\sigma >0$. The left and right Riemann-Liouville fractional integrals of order $\sigma$ are, formally respectively, defined as
\begin{align}
{_a}D_{x}^{-\sigma} w(x) &:= \dfrac{1}{\Gamma(\sigma)}\int_{a}^{x}(x-s)^{\sigma -1}w(s) \, {\rm d}s, \label{eq:LRLI}\\
{_x}D_{b}^{-\sigma} w(x) &:= \dfrac{1}{\Gamma(\sigma)}\int_{x}^{b}(s-x)^{\sigma-1}w(s) \, {\rm d}s,  \label{eq:RRLI}
\end{align}
where $\Gamma(\sigma)$ is the usual Gamma function. For convenience, we set
\begin{equation}\label{eq:LRRLI}
\boldsymbol{D}^{-\sigma}w(x) :={_{-\infty}}D_{x}^{-\sigma} w(x) \text{ and }
\boldsymbol{D}^{-\sigma * }w(x) :={_{x}}D_{\infty}^{-\sigma} w(x). 
\end{equation}
\end{definition}
Various aspects of these operators have been investigated in \cite{MR1347689}.
\begin{remark}\label{rem:ConvolutionofIntegral}
Each $\boldsymbol{D}^{-\sigma}w$ and $\boldsymbol{D}^{-\sigma*}w$ can be expressed as a convolution (\cite{MR1347689}, p. 94), namely,
\[
\boldsymbol{D}^{-\sigma}w= \frac{1}{\Gamma(\sigma)} f_1*w,\quad \boldsymbol{D}^{-\sigma*}w=\frac{1}{\Gamma(\sigma)} f_2*w,
\]
where
\[
f_1= \mathbbm{1}_{(0,\infty)} t^{\sigma-1}
\text{ and }
f_2 = \mathbbm{1}_{(-\infty,0)}  |t|^{\sigma-1},
\]
In particular, if $0<\sigma<1$, $f_1, f_2$ can be identified as distributions since they are locally integrable (see, e.g. \cite{MR1157815}, p. 157).
\end{remark}
\begin{property} \label{cor:SG}
Let $\mu,\sigma>0$,  $w\in L^p(\mathbb{R})$ with $1\leq p\leq \infty$. For any fixed $a, b\in \mathbb{R}$, the following is true 
\begin{equation}
\begin{aligned}
{_aD_x^{-\mu}}{_aD_x^{-\sigma}}w(x) &= {_aD_x^{-\mu - \sigma}}w(x),  \quad x> a \\
{_xD_b^{-\mu}}{_xD_b^{-\sigma}}w(x)& = {_xD_b^{-\mu - \sigma}}w(x), \quad b< x.
\end{aligned}
\end{equation}
\end{property}
\begin{proof}
For a bounded interval $(a,b)$ and $w \in L^p(a,b)$ it has been shown in 
\cite{MR2218073} Lemma 2.3, p. 73 that
\begin{align} \label{eq:ZZZ}
{_aD_x^{-\mu}}{_aD_x^{-\sigma}}w(x) = {_aD_x^{-\mu - \sigma}}w(x)  \text{ and }~
{_xD_b^{-\mu}}{_xD_b^{-\sigma}}w(x) = {_xD_b^{-\mu - \sigma}}w(x).
\end{align}
The proof below is shown only for the first equality,  the second one follows similarly.
For any $x> a$, we could always pick a integer $n$, such that $x\in (a,a+n)$. Notice $w\in L^p(a,a+n)$, applying \cref{eq:ZZZ}, we have
 \[
 {_aD_x^{-\mu}}{_aD_x^{-\sigma}}w(x) = {_aD_x^{-\mu - \sigma}}w(x), ~~x\in (a,a+n).
 \]
Since $n$ is arbitrary, this means ${_aD_x^{-\mu}}{_aD_x^{-\sigma}}w(x) = {_aD_x^{-\mu - \sigma}}w(x)$, for $x> a$.
\end{proof}
The following is an immediate consequence of \Cref{cor:SG}.
\begin{corollary}\label{cor:SGInfinity}
Let $\mu,\sigma >0$, and $w\in C_0^\infty(\mathbb{R})$, then
\begin{equation}
\boldsymbol{D}^{-\mu} \boldsymbol{D}^{-\sigma} w=\boldsymbol{D}^{-(\mu+\sigma)}w  \quad \text{and} \quad \boldsymbol{D}^{-\mu*} \boldsymbol{D}^{-\sigma*}w = \boldsymbol{D}^{-(\mu+\sigma)*}w.
\end{equation}
\end{corollary}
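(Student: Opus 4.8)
The plan is to deduce \Cref{cor:SGInfinity} directly from \Cref{cor:SG} by observing that the infinite-domain operators $\boldsymbol{D}^{-\sigma}$ and $\boldsymbol{D}^{-\sigma*}$ are precisely the finite-domain operators with the endpoints sent to infinity, and that for $w\in C_0^\infty(\mathbb{R})$ nothing is lost in this limit because the support of $w$ is bounded. Concretely, since $w\in C_0^\infty(\mathbb{R})$ it has compact support, say $\operatorname{supp} w \subset (a,b)$ for some finite $a<b$, and moreover $w\in L^p(\mathbb{R})$ for every $1\le p\le\infty$, so the hypotheses of \Cref{cor:SG} are satisfied.

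The key observation I would make precise is that for a fixed evaluation point $x$ and any lower limit $a' \le \min\{a,x\}$ below the support, the integral defining ${_{a'}}D_x^{-\sigma}w(x)$ is unaffected by the choice of $a'$, because the integrand $(x-s)^{\sigma-1}w(s)$ vanishes for $s<a$. Hence ${_{a'}}D_x^{-\sigma}w(x) = \boldsymbol{D}^{-\sigma}w(x)$ for all such $a'$, and in particular the value stabilizes as $a'\to-\infty$. The same argument shows ${_{a'}}D_x^{-\mu}{_{a'}}D_x^{-\sigma}w(x)=\boldsymbol{D}^{-\mu}\boldsymbol{D}^{-\sigma}w(x)$, provided I also check that the intermediate function ${_{a'}}D_x^{-\sigma}w$ agrees with $\boldsymbol{D}^{-\sigma}w$ on the relevant range of the outer integration so that the outer operator sees the same input. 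This compatibility is the one point requiring a little care.

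First I would fix $x\in\mathbb{R}$ and choose $a'<\min\{a,x\}$. By the support argument, $\boldsymbol{D}^{-\sigma}w(y) = {_{a'}}D_y^{-\sigma}w(y)$ for every $y$ in the range $(a',x)$ over which the outer integral of the composition integrates, so the composite operator $\boldsymbol{D}^{-\mu}\boldsymbol{D}^{-\sigma}w(x)$ coincides with ${_{a'}}D_x^{-\mu}{_{a'}}D_x^{-\sigma}w(x)$. Applying \Cref{cor:SG} with this fixed $a'$ gives
\[
{_{a'}}D_x^{-\mu}{_{a'}}D_x^{-\sigma}w(x) = {_{a'}}D_x^{-(\mu+\sigma)}w(x),
\]
and invoking the support argument once more on the right-hand side yields ${_{a'}}D_x^{-(\mu+\sigma)}w(x)=\boldsymbol{D}^{-(\mu+\sigma)}w(x)$. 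Chaining these equalities produces $\boldsymbol{D}^{-\mu}\boldsymbol{D}^{-\sigma}w(x)=\boldsymbol{D}^{-(\mu+\sigma)}w(x)$, and since $x$ was arbitrary the first identity follows. The second identity, involving the right-handed operators $\boldsymbol{D}^{-\sigma*}$, is handled by the mirror-image argument: choose $b'>\max\{b,x\}$ above the support and use the second equality in \Cref{cor:SG}.

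The main obstacle, such as it is, lies in the bookkeeping of the previous paragraph: one must confirm that replacing $-\infty$ by a finite but sufficiently low limit $a'$ genuinely leaves every integral in the composition unchanged, including the outer application of ${_{a'}}D_x^{-\mu}$ whose integrand involves the already-computed fractional integral ${_{a'}}D_x^{-\sigma}w$. The compact support of $w$ is exactly what makes this work, and it is why the corollary is stated for $C_0^\infty(\mathbb{R})$ rather than for general $L^p$ functions. No genuine limiting or convergence argument is needed; the identity is purely a matter of the integrals being independent of the artificial endpoint once it is pushed past the support of $w$.
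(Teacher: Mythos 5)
Your proof is correct and takes essentially the same route as the paper, which presents the corollary as an immediate consequence of \Cref{cor:SG}: because $\text{supp}(w)$ is bounded, the lower limit $-\infty$ in each Riemann--Liouville integral can be replaced by a finite point below the support (and below $x$) without changing any of the integrals in the composition, after which the finite-endpoint semigroup property applies. The only difference is that you make explicit the bookkeeping step---that the intermediate function ${_{a'}}D_x^{-\sigma}w$ agrees with $\boldsymbol{D}^{-\sigma}w$ on the range seen by the outer operator---which the paper leaves implicit.
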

%
\begin{comment}
\begin{proof}
Again, the proof is shown only for the first equality, the other one could be established in a similar way.
%
Suppose $\text{supp}(u)\subset(a,b)$, where $(a,b)\subset \mathbb{R}$ is a bounded interval.
Then, by definition,  $\text{supp}(\boldsymbol{D}^{-\sigma} u)$ $\subset(a,\infty)$. Notice 
%
\begin{equation}
\boldsymbol{D}^{-\sigma} u =
\mathbbm{1}_{(a,\infty)} \, {_aD^{-\sigma}_xu} 
\quad
\text{and}
\quad
\boldsymbol{D}^{-\mu}\boldsymbol{D}^{-\sigma} u
=
\mathbbm{1}_{(a,\infty)} \,
{_aD^{-\mu}_x}{_aD^{-\sigma}_xu}
\end{equation}
%
Notice $u \in C_0^\infty(\mathbb{R})$, applying the above \Cref{cor:SG} gives
%
\begin{equation}
\boldsymbol{D}^{-\mu}\boldsymbol{D}^{-\sigma} u =
\mathbbm{1}_{\{ a<x \}} \, \, {_aD^{-\mu}_x}{_aD^{-\sigma}_xu}
= \mathbbm{1}_{\{ a<x \}} \, \, u =u.
\end{equation}
%
\end{proof}
%
\end{comment}
%
\begin{corollary}\label{cor:ADJInfinity}
Let $v,w\in C^\infty(\mathbb{R})$,  $\text{supp}(v)\subset(a,+\infty), \text{supp}(w)\subset(-\infty, b), \mu>0,b>a$. It is true that
\begin{equation}
(\boldsymbol{D}^{-\mu}v,w)=(v,\boldsymbol{D}^{-\mu*} w).
\end{equation}
\end{corollary}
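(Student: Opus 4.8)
The plan is to establish this adjoint identity by a single application of Fubini's theorem, the crucial observation being that the complementary one-sided support conditions collapse the relevant integration onto a bounded set. First I would unfold both inner products via the integral definitions in \Cref{def:RLI} together with the notation \cref{eq:LRRLI}, so that the left-hand side reads
\[
(\boldsymbol{D}^{-\mu}v, w) = \frac{1}{\Gamma(\mu)} \int_{\mathbb{R}} \left( \int_{-\infty}^{x} (x-s)^{\mu-1} v(s)\, {\rm d}s \right) w(x)\, {\rm d}x,
\]
and I would regard this as the integral of the kernel $(x-s)^{\mu-1} v(s) w(x)$ over the half-plane $\{(s,x) : s < x\}$.

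Next I would use the hypotheses $\text{supp}(v) \subset (a,+\infty)$ and $\text{supp}(w)\subset(-\infty,b)$ to localize. The integrand vanishes unless simultaneously $s > a$, $x < b$, and $s < x$, so the effective domain is the bounded triangle $T = \{(s,x) : a < s < x < b\}$, which is nonempty exactly because $b > a$. On the closure of $T$ both $v$ and $w$ are continuous and hence uniformly bounded, while $\int\!\!\int_T (x-s)^{\mu-1}\, {\rm d}s\, {\rm d}x < \infty$ because $\mu - 1 > -1$ renders the singularity along $s=x$ integrable. These two facts deliver absolute integrability of the kernel over $T$, which is precisely the hypothesis required to interchange the order of integration.

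Finally, interchanging the order and integrating in $x$ first over $(s,+\infty)$ produces
\[
\frac{1}{\Gamma(\mu)} \int_{\mathbb{R}} v(s) \left( \int_{s}^{+\infty} (x-s)^{\mu-1} w(x)\, {\rm d}x \right) {\rm d}s,
\]
in which the inner integral is by \cref{eq:RRLI} exactly $\Gamma(\mu)\,\boldsymbol{D}^{-\mu*}w(s)$; the expression therefore reduces to $(v, \boldsymbol{D}^{-\mu*}w)$, completing the argument. The only step needing genuine care is the justification of Fubini, since $C^\infty$ functions with merely one-sided bounded support need be neither integrable nor decaying; the main obstacle is thus resolved not by estimating growth but by observing that the complementary supports of $v$ and $w$, coupled with the causal structure of the R-L kernel, force the domain of integration to be bounded, so that no global integrability or decay assumptions are needed.
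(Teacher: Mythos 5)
Your argument is correct, but it follows a different route from the paper's. The paper does not integrate anything by hand: it observes that the support hypotheses force $\boldsymbol{D}^{-\mu}v\cdot w$ and $v\cdot\boldsymbol{D}^{-\mu*}w$ to live on the bounded interval $(a,b)$, where $\boldsymbol{D}^{-\mu}v$ coincides with ${_aD_x^{-\mu}}v$ and $\boldsymbol{D}^{-\mu*}w$ with ${_xD_b^{-\mu}}w$, and then invokes the known adjoint relation $({_aD_x^{-\sigma}}v,w)_{L^2(a,b)}=(v,{_xD_b^{-\sigma}}w)_{L^2(a,b)}$ from \cite{MR1347689} (the corollary of Theorem 3.5 there), i.e.\ \cref{eq:ADJJ}. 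You instead prove that bounded-interval identity from scratch: you unfold both sides via \Cref{def:RLI}, note that the complementary one-sided supports together with the causal kernel confine the double integral to the bounded triangle $\{a<s<x<b\}$, verify absolute integrability there (continuity of $v,w$ on the compact closure plus integrability of $(x-s)^{\mu-1}$ across the diagonal for $\mu>0$), and apply Fubini. The two proofs exploit exactly the same structural observation --- the localization to $(a,b)$ --- but yours is self-contained and makes the well-definedness of both inner products explicit, whereas the paper's is shorter by outsourcing the Fubini step to the literature. No gap in either.
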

\begin{proof}
For a bounded interval $(a,b)$ and $v,w \in L^2(a,b)$ and $\sigma>0$, it has been shown in the corollary of Theorem 3.5, p. 67 of \cite{MR1347689}, that
\begin{equation} \label{eq:ADJJ}
 (_aD_x^{-\sigma}v,w)_{L^2(a,b)} = (v,{_xD_b^{-\sigma}}w)_{L^2(a,b)}.
\end{equation}
Notice $\text{supp}(\boldsymbol{D}^{-\mu}v)\subset (a,+\infty)$ and  $ \text{supp}(\boldsymbol{D}^{-\mu*}w)\subset (-\infty, b)$. By \Cref{def:RLI} and \cref{eq:ADJJ},
\begin{equation}
(\boldsymbol{D}^{-\mu}v,w) = (_aD_x^{-\mu}v,w)_{L^2(a,b)} = (v,{_xD_b^{-\mu}}w)_{L^2(a,b)} =(v,\boldsymbol{D}^{-\mu*} w).
\end{equation}
\end{proof}
%
%
\begin{comment}
\begin{property}(\cite{MR1347689}, p. 96)\label{pro:ShiftingPower}
Let $\mu>0$, $u\in L^p(\mathbb{R}), v\in L^r(\mathbb{R}), p>1,r>1,1/p+1/r=\mu+1 $. Then
\begin{equation}
(u, \boldsymbol{D}^{-\mu} v)=(\boldsymbol{D}^{-\mu*} u,v).
\end{equation}
\end{property}
%
\end{comment}
%
\begin{property}[Fourier Transform of R-L Integrals,  \cite{MR1347689}, Theorem 7.1, p.138] \label{prop:FTFI} 
Under the assumption that $w \in L^1(\mathbb{R})$ and $0<\sigma<1$,
 \begin{equation}
  \mathcal{F}(\boldsymbol{D}^{-\sigma} w ) = (2\pi i\xi)^{-\sigma} \mathcal{F}(w) \text{ and }
  \mathcal{F}(\boldsymbol{D}^{-\sigma*}w )= (-2\pi i\xi)^{-\sigma} \mathcal{F}(w),\quad \xi \ne 0.
 \end{equation}
 \end{property}
 This property is equivalently given in \cite{MR1347689} Theorem 7.1, p. 138  with a different version of the definition of Fourier Transform up to a sign $-2\pi$ in exponentiation.

\begin{remark}\label{rem:ComplexPowerFunctions}
$(\mp i\xi)^{\sigma}$ is understood as equal to $|\xi|^\sigma e^{\mp  \sigma \pi i \cdot \text{sign} (\xi)/2}$.
\end{remark}
The following property is on the commutativity of R-L integrals with translation and dilation operators.
\begin{property}[\cite{MR1347689}, pp. 95, 96]\label{pro:Translation}
Under the assumption that $\boldsymbol{D}^{-\mu}w$ and $\boldsymbol{D}^{-\mu*}w$ are well-defined, the following is true:
\begin{equation}
\begin{aligned}
\tau_h(\boldsymbol{D}^{-\mu}w)=\boldsymbol{D}^{-\mu}(\tau_hw)&,\quad \tau_h(\boldsymbol{D}^{-\mu*}w)=\boldsymbol{D}^{-\mu*}(\tau_hw)\\
\Pi_\kappa(\boldsymbol{D}^{-\mu}w)=\kappa^\mu \boldsymbol{D}^{-\mu}(\Pi_\kappa w)&,\quad
\Pi_\kappa(\boldsymbol{D}^{-\mu*}w)=\kappa^\mu \boldsymbol{D}^{-\mu*}(\Pi_\kappa w).
\end{aligned}
\end{equation}
\end{property}
\subsection{Fractional Riemann-Liouville Derivatives and Their Properties}
\begin{definition}\label{def:RLD}
Let $(a,b) \subset \mathbb{R}$ and $w:(a,b) \rightarrow \mathbb{R}$. Assume $\mu >0$ and $n$ is the smallest integer greater than $\mu$ (i.e., $n-1 \leq \mu < n$). The left and right Riemann-Liouville fractional derivatives of order $\mu$ are, formally respectively, defined as
\begin{align}
{_a}D_x^{\mu}w := \frac{{\rm d}^n}{{\rm d}x^n} \left( {_a}D_x^{\mu-n} w(x) \right),  \text{ and }~
{_x}D_b^{\mu}w := (-1)^n \frac{{\rm d}^n}{{\rm d} x^n} \left( {_x}D_b^{\mu-n} w(x) \right).
\end{align}
For convenience of notation, we set
\begin{equation}
\boldsymbol{D}^{\mu} w = {_{-\infty}}D_{x}^{\mu}w \text{ and }
\boldsymbol{D}^{\mu*}w =  {_{x}}D^{\mu}_{\infty}w .\label{def:Infty}
\end{equation}
\end{definition}
\begin{property}[\cite{MR2218073}, Lemma 2.4, p. 74 ] \label{lem:IP1}
For any $\mu >0$ and $w \in L^p(a,b)$, where $(a,b) \subset \mathbb{R}$ is a bounded interval and $1\leq p\leq \infty$, then
\begin{equation}
\begin{aligned}
{_a}D_x^{\mu} {_aD_x^{-\mu}} w  = w(x) ~\text{ and }~
{_x}D_b^{\mu} {_xD_b^{-\mu}}w = w(x).
\end{aligned}
\end{equation}
\end{property}
Two immediate consequences of \Cref{lem:IP1} are stated below.
\begin{corollary}\label{cor:InverseProperty1}
Let $\mu >0$ and $w \in L^p(\mathbb{R})$ with $1\leq p\leq \infty$. For any fixed $a,b\in \mathbb{R}$,
\begin{equation}
_aD_x^{\mu} {_aD_x^{-\mu}} w = w(x), ~~\text{for }x> a, ~\text{ and }~
_xD_b^{\mu} {_xD_b^{-\mu}}w = w(x), ~~\text{for }x< b.
\end{equation}
\end{corollary}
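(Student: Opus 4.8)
The plan is to reduce the whole-line statement to the bounded-interval result of \Cref{lem:IP1} by a localization argument, exactly mirroring the proof of \Cref{cor:SG}. The crucial observation is that the left-sided operators ${}_aD_x^{-\mu}$ and ${}_aD_x^{\mu}$ are \emph{local} with respect to their right endpoint: for a fixed point $x>a$, the value ${}_aD_x^{-\mu}w$ at $x$ is obtained by integrating $w$ only over $(a,x)$, and the subsequent differentiation defining ${}_aD_x^{\mu}$ is a pointwise operation. Consequently, the composite ${}_aD_x^{\mu}{}_aD_x^{-\mu}w$ evaluated at $x$ depends only on the restriction of $w$ to $(a,x)$. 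This is immediate from \Cref{def:RLI,def:RLD}, and it is the fact that lets one pass freely between computing on a bounded interval and on all of $\mathbb{R}$.

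First I would fix $a\in\mathbb{R}$ and an arbitrary point $x>a$, then choose an integer $n$ large enough that $x\in(a,a+n)$. Since $w\in L^p(\mathbb{R})$, its restriction to the bounded interval $(a,a+n)$ lies in $L^p(a,a+n)$, so \Cref{lem:IP1} applies on that interval and gives
\begin{equation*}
{}_aD_x^{\mu}\,{}_aD_x^{-\mu}w = w(x), \qquad x\in(a,a+n).
\end{equation*}
By the locality noted above, the quantity ${}_aD_x^{-\mu}w$ and its derivative computed from $w$ on all of $\mathbb{R}$ coincide on $(a,a+n)$ with those computed from the restriction $w|_{(a,a+n)}$; hence the displayed identity is precisely the whole-line identity restricted to $(a,a+n)$. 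Since $n$ may be taken arbitrarily large, the identity holds for every $x>a$.

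The right-sided identity ${}_xD_b^{\mu}\,{}_xD_b^{-\mu}w = w(x)$ for $x<b$ follows by the symmetric argument, using the bounded intervals $(b-n,b)$ and the locality of the right-sided operators with respect to their left endpoint. I do not expect a genuine obstacle here; the only point requiring care is the explicit verification that both the fractional integral and the fractional derivative depend solely on the values of $w$ on $(a,x)$ (respectively on $(x,b)$), which justifies transferring \Cref{lem:IP1} from a bounded interval to $\mathbb{R}$.
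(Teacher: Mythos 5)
Your proposal is correct and matches the paper's intent: the paper states this corollary as an ``immediate consequence'' of \Cref{lem:IP1} without writing out a proof, and the localization argument you give (restrict to $(a,a+n)$, apply the bounded-interval identity, let $n$ be arbitrary, using that the left-sided operators at $x$ depend only on $w$ on $(a,x)$) is exactly the argument the paper itself uses for the parallel statement \Cref{cor:SG}. No issues.
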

\begin{property} \label{lem:IP2}
Let $\mu >0$ and $(a,b) \subset \mathbb{R}$ be a bounded interval. If  $w={_aD_x^{-\mu}\psi}$ for some $\psi \in L^p(a,b)$ with $  1\leq p\leq \infty$,  then
\begin{equation} \label{eq:IP2o}
_aD_x^{-\mu}{_aD_x^{\mu}}w=w(x), \forall x\in (a,b).
\end{equation}
Furthermore, if $w\in C^\infty(a, +\infty)$ and $\text{supp}(w)\subset (a, +\infty)$, then
\begin{equation} 
_aD_x^{-\mu}{_aD_x^{\mu}}w=w(x), \forall x\in (a,+\infty).
\end{equation}
Similarly, if $w={_xD_b^{-\mu} \psi}$ for some $\psi \in L^p(a,b)$ with $1\leq p\leq \infty$ , then
\begin{equation} \label{eq:IP2tw}
_xD_b^{-\mu}{_xD_b^\mu}w=w(x), \forall x\in (a,b).
\end{equation}
And furthermore, if $w\in C^\infty(-\infty, b)$ and $\text{supp}(w)\subset (-\infty, b)$, then
\begin{equation} 
_xD_b^{-\mu}{_xD_b^\mu}w=w(x), \forall  x\in (-\infty, b).
\end{equation}
\end{property}
This property is equivalently stated by \cite{{MR1347689}} (c.f. Theorem~2.3, p.~43 combined with Theorem~2.4, p.~44). As an immediate corollary, we have:
\begin{corollary}\label{cor:IPC}
Let $0< \mu$, and $w\in C_0^\infty(\mathbb{R})$, then
\begin{equation}
\boldsymbol{D}^{-\mu} \boldsymbol{D}^\mu w=w  \quad \text{and} \quad \boldsymbol{D}^{-\mu*} \boldsymbol{D}^{\mu*}w = w.
\end{equation}
\end{corollary}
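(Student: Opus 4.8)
The plan is to reduce the whole-line operators $\boldsymbol{D}^{\mu}$ and $\boldsymbol{D}^{-\mu}$ to their finite-endpoint counterparts ${_aD_x^{\mu}}$ and ${_aD_x^{-\mu}}$, and then invoke the $C^\infty$-with-support part of \Cref{lem:IP2}. Since $w \in C_0^\infty(\mathbb{R})$, I first fix any $a \in \mathbb{R}$ with $\text{supp}(w) \subset (a, +\infty)$. Because $w(s) = 0$ for $s \le a$, replacing the lower limit $-\infty$ by $a$ leaves the defining integral in \Cref{def:RLI} unchanged: for every $x > a$ and every order $\nu > 0$ one has $\boldsymbol{D}^{-\nu} w(x) = {_aD_x^{-\nu}} w(x)$, while for $x < a$ the integrand vanishes identically so that $\boldsymbol{D}^{-\nu} w(x) = 0$.

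Next I would apply this truncation to the fractional integral $\boldsymbol{D}^{\mu - n} w$, where $n$ is the smallest integer exceeding $\mu$ as in \Cref{def:RLD}. This gives $\boldsymbol{D}^{\mu-n}w = {_aD_x^{\mu-n}}w$ on $(a, +\infty)$ and $\boldsymbol{D}^{\mu-n}w = 0$ on $(-\infty, a)$. Differentiating $n$ times, which is legitimate because both sides are smooth on the open set $(a, +\infty)$, yields $\boldsymbol{D}^{\mu}w = {_aD_x^{\mu}}w$ on $(a, +\infty)$ and $\boldsymbol{D}^{\mu}w = 0$ on $(-\infty, a)$. In particular $\boldsymbol{D}^{\mu}w$ again vanishes to the left of $a$, so the very same truncation argument applies to the outer integral: for $x > a$ one obtains $\boldsymbol{D}^{-\mu}\boldsymbol{D}^{\mu}w(x) = {_aD_x^{-\mu}}(\boldsymbol{D}^{\mu}w)(x) = {_aD_x^{-\mu}}{_aD_x^{\mu}}w(x)$.

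Now $w$ is smooth with $\text{supp}(w) \subset (a, +\infty)$, so the second (smooth, compactly supported) clause of \Cref{lem:IP2} gives ${_aD_x^{-\mu}}{_aD_x^{\mu}}w = w$ for all $x \in (a, +\infty)$; combined with the preceding identity this proves $\boldsymbol{D}^{-\mu}\boldsymbol{D}^{\mu}w = w$ on $(a, +\infty)$. For $x \le a$ both sides vanish, the right side because $w$ is supported in $(a,+\infty)$ and the left side because $\boldsymbol{D}^{\mu}w = 0$ on $(-\infty, a)$ forces the outer integral to be zero, so the identity holds on all of $\mathbb{R}$. The companion identity $\boldsymbol{D}^{-\mu*}\boldsymbol{D}^{\mu*}w = w$ follows by the mirror argument: fix $b$ with $\text{supp}(w) \subset (-\infty, b)$, truncate the upper limit $+\infty$ to $b$ in both $\boldsymbol{D}^{\mu*}$ and $\boldsymbol{D}^{-\mu*}$, and apply the right-sided clause of \Cref{lem:IP2}.

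I expect the main obstacle to be the middle step: justifying that truncating the lower limit commutes with the $n$-fold differentiation, that is, that $\boldsymbol{D}^{\mu}w$ genuinely coincides with ${_aD_x^{\mu}}w$ on $(a, +\infty)$ and vanishes on $(-\infty, a)$. Only once this is established does the outer operator $\boldsymbol{D}^{-\mu}$ collapse to ${_aD_x^{-\mu}}$ so that \Cref{lem:IP2} becomes applicable. The smoothness and compact support of $w$ are exactly the ingredients that keep this bookkeeping clean.
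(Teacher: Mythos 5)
Your proposal is correct and follows essentially the same route as the paper: the paper also fixes $a$ with $\text{supp}(w)\subset(a,+\infty)$, observes that $\boldsymbol{D}^{\mu}w=\mathbbm{1}_{\{x>a\}}\,{_aD_x^{\mu}}w$ so that $\boldsymbol{D}^{-\mu}\boldsymbol{D}^{\mu}w=\mathbbm{1}_{\{x>a\}}\,{_aD_x^{-\mu}}{_aD_x^{\mu}}w$, and then invokes the smooth, supported-in-$(a,+\infty)$ clause of \Cref{lem:IP2}. Your added care in justifying that the truncation of the lower limit commutes with the $n$-fold differentiation is a welcome elaboration of a step the paper treats as immediate.
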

%
\begin{comment}
%
\begin{proof}
The proof is shown only for the first equality, the other one could be established in a similar way.
%
Suppose $\text{supp}(u)\subset(a,b)$, where $a,b$ are finite numbers.
Then, by definition,  we know $\text{supp}(\boldsymbol{D}^\mu u)$ $\subset(a,\infty)$. Notice 
%

\begin{equation}
\boldsymbol{D}^{\mu} u =
\mathbbm{1}_{\{a<x\}} \, \, {_aD^{\mu}_xu}
\quad
\text{and}
\quad
\boldsymbol{D}^{-\mu}\boldsymbol{D}^\mu u =
\mathbbm{1}_{\{a<x\}} \, \,
{_aD^{-\mu}_x}{_aD^\mu_xu}.
\end{equation}
%
Since  $u\in C_0^\infty(\mathbb{R})$, applying above \Cref{lem:IP2} gives
$
\boldsymbol{D}^{-\mu}\boldsymbol{D}^\mu u =
\mathbbm{1}_{\{a<x\}} \, \, u = u.
$
%
\end{proof}
%
\end{comment}
%
\begin{property} \label{prop:Boundedness}
Let $0<\mu $ and  $w\in C_0^\infty(\mathbb{R})$, then $\boldsymbol{D}^\mu w, \boldsymbol{D}^{\mu*} w \in L^p(\mathbb{R})$ for any $1\leq p<\infty$.
\end{property}
\begin{proof}
The proof is shown only for $\boldsymbol{D}^\mu w$, the other one can be established in a similar fashion. Since $w\in C_0^\infty(\mathbb{R})$, there exists a bounded interval $(a,b-1)$, such that $\text{supp}(w) \subset (a,b-1)$. When $\mu$ is a positive integer, then $\boldsymbol{D}^\mu w=w^{(\mu)}\in L^p(\mathbb{R})$ for any $1\leq p<\infty$. Otherwise, we can always choose a non-negative integer $n$ such that $n-1<\mu< n$. Since $w \in C^\infty_0(\mathbb{R})$, by \Cref{cor:IPC}, $w(x)=\boldsymbol{D}^{-n}v(x)$, where $v(x)=w^{(n)}(x)$ also belonging to
$C_0^\infty(\mathbb{R})$. Thus, $\boldsymbol{D}^\mu w=\boldsymbol{D}^\mu\boldsymbol{D}^{-n}v(x)$.
Applying  \Cref{cor:SGInfinity}, we know $\boldsymbol{D}^{-n}v(x)=\boldsymbol{D}^{-\mu}\boldsymbol{D}^{-(n-\mu)}v(x)$. Plugging in back gives  
\begin{equation}\label{equ:Property2.5}
\boldsymbol{D}^\mu w =\boldsymbol{D}^\mu(\boldsymbol{D}^{-\mu}\boldsymbol{D}^
{-(n-\mu)}v(x))
=(\boldsymbol{D}^\mu \boldsymbol{D}^{-\mu})\boldsymbol{D}^
{-(n-\mu)}v(x).
\end{equation}
Since $(\boldsymbol{D}^\mu \boldsymbol{D}^{-\mu})\boldsymbol{D}^{-(n-\mu)}v(x)=\mathbbm{1}_{\{a<x\}}\,({_aD_x^\mu}{_aD_x^{-\mu}}){_aD_x^{-(n-\mu)}}v(x)$, applying \Cref{cor:InverseProperty1} and plugging  back into \cref{equ:Property2.5} yields
\begin{equation}\label{equ:AAA}
\boldsymbol{D}^\mu w=\boldsymbol{D}^{-(n-\mu)}v(x).
\end{equation}
Now we consider decomposition $\boldsymbol{D}^{\mu} w = f(x)+g(x)$, where

\begin{equation}
f(x)=\mathbbm{1}_{\{ x\le b\}} \, \, \boldsymbol{D}^\mu w
\quad \text{and} \quad
g(x)=\mathbbm{1}_{\{ x> b\}} \, \, \boldsymbol{D}^\mu w.
\end{equation}
In order to show $\boldsymbol{D}^{\mu} w \in L^p(\mathbb{R})$, we only need to show $f,g\in L^p(\mathbb{R})$. First we claim $f\in L^p(\mathbb{R})$. By \cref{equ:AAA} and definition in \cref{def:Infty}, we know
\begin{equation}
f(x)= \mathbbm{1}_{\{ x\leq b\}} \, \, \boldsymbol{D}^{-(n-\mu)} v(x)
= \mathbbm{1}_{\{a<x\leq b\}} \, \, {_aD_x}^{-(n-\mu)} v(x),
\end{equation}
and thus (see for example \cite{MR1347689}, p. 48)
\[
\|f\|_{L^p(\mathbb{R})}=\|{_aD_x}^{-(n-\mu)} v\|_{L^p(a,b)} \le \dfrac{(b-a)^{n-\mu}}{(n-\mu) \Gamma(n-\mu) }\|v\|_{L^p(a,b)}<\infty. 
\]
Thus, $f\in L^p(\mathbb{R})$.
Next it is demonstrated that $g\in L^p(\mathbb{R})$. By setting $\sigma=n-\mu$ and using \Cref{def:RLD}, $g(x) = \mathbbm{1}_{\{ x>b\}} (\Gamma(\sigma))^{-1}  \RN{1}$, with
\[
\RN{1} = \dfrac{{\rm d}^n}{{\rm d}x^n}\int_{-\infty}^x(x-s)^{\sigma-1}w(s)\,{\rm d}s
= \dfrac{{\rm d}^n}{{\rm d}x^n}\int_{-\infty}^{b-1}(x-s)^{\sigma-1}w(s)\,{\rm d}s,
\]
Notice that when $x>b$,
\[
\left |\dfrac{{\rm d}^n}{{\rm d}x^n}\left((x-s)^{\sigma-1}w(s) \right) \right|\leq |w(s)| ~~\text{and} ~~ |w(s)| ~\text{is integrable over}~ (b, \infty).
\]
Therefore, application Dominated Convergence Theorem gives
\[
\RN{1}= C \int_{-\infty}^{b-1}(x-s)^{\widetilde{\sigma}}w(s)\,{\rm d}s, ~~\text{where}~~\widetilde{\sigma} = \sigma-1-n,
\]
and $C=(\sigma-1)\cdot(\sigma-2)\cdots(\sigma-n)$. 
Applying H$\ddot{\mathbf{o}}$lder's Inequality to $\RN{1}$, for $x>b$, results in
\begin{equation*}
\RN{1} \le |\RN{1}|
\le  C \|(x-s)^{\widetilde{\sigma}}\|_{L^\infty(-\infty,b-1)} \|w\|_{L^1(-\infty,b-1)}
\le C (x-b+1)^{\widetilde{\sigma}} \|w\|_{L^1(-\infty,b-1)}.
\end{equation*}
Notice that $\|w\|_{L^1(-\infty,b-1)}=\|w\|_{L^1(\mathbb{R})}<\infty$,  and since $\widetilde{\sigma}<-1$, it can be easily verified that $(x-b+1)^{\widetilde{\sigma}}\in L^p(b,\infty)$ for $1\leq p< \infty$. Taken all these together into account yields 
$\|g\|_{L^p(\mathbb{R})}=\|\mathbbm{1}_{\{ x>b\}} (\Gamma(\sigma))^{-1}  \RN{1}\|_{L^p(\mathbb{R})} = \|(\Gamma(\sigma))^{-1}  \RN{1}\|_{L^p(b,\infty)}<\infty$.
Therefore, $\mathbf{D}^{\mu} w \in L^p(\mathbb{R})$, which completes the proof.
\end{proof}
\begin{property}[Fourier Transform of R-L Derivatives  \cite{MR1347689}, p. 137] \label{lem:FTFD}
 If $\mu > 0$ and $w \in C_0^\infty(\mathbb{R})$, then
 \begin{equation} \label{eq:FTFDo}
 \mathcal{F}(\boldsymbol{D}^{\mu}w) = (2\pi i \xi)^{\mu} \mathcal{F}(w) \text{ and }
 \mathcal{F}(\boldsymbol{D}^{\mu*} w) = (-2\pi i \xi)^{\mu} \mathcal{F}(w), \quad \xi \ne 0, 
\end{equation}
where as in \Cref{prop:FTFI}, $(\mp i\xi)^{\sigma}$is understood as $|\xi|^\sigma e^{\mp  \sigma \pi i \cdot \emph{sign} (\xi)/2}$.
\end{property}
 \begin{proof}
Using a different version of the Fourier Transform (namely up to a sign $-2\pi$ in the exponential position), this property was stated without a proof in \cite{MR1347689}, p.137. The proof below is provided only for completeness. The proof is shown only for the Fourier Transform of left derivative since the right derivative counterpart can be carried out analogously. First notice that if $\mu $ is a positive integer, integration by parts and a simple calculation give the equality (see, e.g. \cite{MR924154}, p. 274).
Otherwise,  there is a positive integer $n$, such that $n-1<\mu< n$.
 Suppose $\text{supp}(w) \subset(a,b)$, with $-\infty<a<b<\infty$. Using \Cref{rem:ConvolutionofIntegral} and \Cref{thm:differentiability} gives
  \begin{equation}
 \boldsymbol{D}^{\mu}w =
\mathbbm{1}_{\{ x>a\}} \, \, {_aD_x^\mu}w
 = \mathbbm{1}_{\{ x>a\}} \, \,  {_{a}D_x^{-(n-\mu)}} w^{(n)}(x)
 =  \boldsymbol{D}^{-(n-\mu)} w^{(n)}(x).
 \end{equation}
 Notice now $0<n-\mu<1$, so using \Cref{prop:FTFI} yields
 \begin{equation}
 \mathcal{F}(\boldsymbol{D}^{\mu}w)=\mathcal{F}(\boldsymbol{D}^{-n+\mu} w^{(n)} ) =  (2\pi i\xi)^{-n+\mu} \mathcal{F}(w^{(n)}), \xi\neq 0.
 \end{equation}
The proof is completed by recalling that  $\mathcal{F}(w^{(n)}) = (2 \pi i \xi)^n \mathcal{F}(w)$.
 \end{proof}
\begin{property}\label{pro:TranslationDerivative} Given $w \in C_0^\infty(\mathbb{R})$, $\mu>0$, and a positive integer $n$  such that $n-1\leq \mu<n$, then
\begin{equation}
\begin{aligned}
\tau_h(\boldsymbol{D}^{\mu}w)=\boldsymbol{D}^{\mu}(\tau_hw)&,\quad \tau_h(\boldsymbol{D}^{\mu*}w)=\boldsymbol{D}^{\mu*}(\tau_hw)\\
\Pi_\kappa(\boldsymbol{D}^{\mu}w)=\kappa^{-\mu}\boldsymbol{D}^{\mu}(\Pi_\kappa w)&,\quad
\Pi_\kappa(\boldsymbol{D}^{\mu*}w)=\kappa^{-\mu}\boldsymbol{D}^{\mu*}(\Pi_\kappa w).
\end{aligned}
\end{equation}
\end{property}
\begin{proof}
First, since $w\in C_0^\infty(\mathbb{R})$, using \Cref{rem:ConvolutionofIntegral} and \Cref{thm:differentiability} gives $\boldsymbol{D}^\mu w= \boldsymbol{D}^{-(n-\mu)}w^{(n)}$. According to \Cref{pro:Translation}, 
\begin{equation*}
\tau_h(\boldsymbol{D}^\mu w)=\tau_h(\boldsymbol{D}^{-(n-\mu)}w^{(n)})=\boldsymbol{D}^{-(n-\mu)}(\tau_h w^{(n)})=\boldsymbol{D}^{-(n-\mu)}(\tau_h w)^{(n)}=\boldsymbol{D}^\mu(\tau_h w).
\end{equation*}
Similar argument is used to establish $\tau_h(\boldsymbol{D}^{\mu*}w)=\boldsymbol{D}^{\mu*}(\tau_hw)$.

Next, again using \Cref{pro:Translation},
\begin{equation}
\begin{aligned}
\Pi_\kappa(\boldsymbol{D}^{\mu}w)
&=\Pi_\kappa(\boldsymbol{D}^{-(n-\mu)}w^{(n)})\\
&=\kappa^{(n-\mu)}\boldsymbol{D}^{-(n-\mu)}(\Pi_\kappa w^{(n)})\\
&=\kappa^{(n-\mu)}\boldsymbol{D}^{-(n-\mu)}(\kappa^{-n}(\Pi_\kappa w)^{(n)})\\
&=\kappa^{-\mu}\boldsymbol{D}^{-(n-\mu)}((\Pi_\kappa w)^{(n)})\\
&=\kappa^{-\mu}\boldsymbol{D}^\mu (\Pi_\kappa w)
\end{aligned}
\end{equation}
Using similar argument, $\Pi_\kappa(\boldsymbol{D}^{\mu*}w)=\kappa^{-\mu} \boldsymbol{D}^{\mu*}(\Pi_\kappa w)$ follows.
\end{proof}
%
%%%%%%%%%%%%%%%%%%%%%%%%%%%%%%%%%%%%
\section{Characterization of Fractional Sobolev Spaces}\label{sec:CFSS}
%%%%%%%%%%%%%%%%%%%%%%%%%%%%%%%%%%%%
In this section, we shall characterize classical fractional Sobolev spaces $W^{s,2} (\mathbb{R})$ (namely $\widehat{H}^s(\mathbb{R})$) by giving another equivalent definition using weak fractional R-L derivatives. There is a vast amount of literatures devoted to Sobolev spaces (see, e.g., \cite{MR2759829}, \cite{MR2944369}, and \cite{MR2328004}), thus those well-established results pertaining to subsequent analyses are stated without proof.
%
%---------------------------------------------------------
\subsection{Some Facts on Sobolev Spaces}
\label{ssec:SWFSS}
%---------------------------------------------------------
%
\begin{definition}[Sobolev Spaces on $\mathbb{R}$ \cite{MR2597943}, p. 258, \cite{MR2424078} p. 250]
Let $m\in \mathbb{N}_0$, $1\leq p\leq \infty$. 
\begin{equation}
W^{m,p}(\mathbb{R})=\{w\in L^p(\mathbb{R}):D^\alpha w\in L^p(\mathbb{R}), \forall 0\leq \alpha \leq m\},
\end{equation} 
where $\alpha$ is integer and $D^\alpha u$ are  weak derivatives. If $s> 0$ is a real number and $m$ is the smallest integer greater than $s$, the fractional order Sobolev spaces are defined by complex interpolation as
\begin{equation}
W^{s, p}(\mathbb{R})=[L^p(\mathbb{R}), W^{m,p}(\mathbb{R})]_{s/m}.
\end{equation}
\end{definition}
\begin{definition}[Sobolev Spaces Via Fourier Transform, e.g. \cite{MR2944369,MR2328004}]\label{thm:FTHsR}
Given $s\geq 0$, let
\begin{equation}
\widehat{H}^s(\mathbb{R}) = \left \{ w \in L^2(\mathbb{R}) : \int_{\mathbb{R}} (1 + |2\pi\xi|^{2s}) |\widehat{w}(\xi) |^2 \, {\rm d} \xi < \infty \right \},
\end{equation}
where $\widehat{w}$ is Plancherel Transform defined in \Cref{thm:PAR}.
It is endowed with norm
\begin{equation}
\|w\|_{\widehat{H}^s (\mathbb{R})}:=\left(\|w\|^2_{L^2(\mathbb{R})} +|w|^2_{\widehat{H}^s(\mathbb{R})}\right)^{1/2}, ~~\text{with}~~
|w|_{\widehat{H}^s(\mathbb{R})}:=\|(2\pi\xi)^s \widehat{w}\|_{L^2(\mathbb{R})}.
\end{equation}
\end{definition}
\noindent
It is well-known that $\widehat{H}^s(\mathbb{R})$ is a Hilbert space.

\begin{theorem}[\cite{MR2328004}, p. 78]\label{thm:DensityOfSobolevSpaces}
$C_0^\infty(\mathbb{R})$ is  dense in $\widehat{H}^s(\mathbb{R})$.
\end{theorem}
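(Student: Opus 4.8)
The plan is to transfer the statement to the Fourier side, where $\widehat{H}^s(\mathbb{R})$ becomes a weighted $L^2$ space, construct there an approximant whose inverse transform is Schwartz, and finally cut it off in physical space to obtain compact support. Set $X := L^2\big(\mathbb{R},(1+|2\pi\xi|^{2s})\,{\rm d}\xi\big)$. By \Cref{thm:PAR} the Plancherel map $w\mapsto\widehat{w}$ is unitary from $L^2(\mathbb{R})$ onto $L^2(\mathbb{R})$, and by \Cref{thm:FTHsR} it identifies $\widehat{H}^s(\mathbb{R})$ isometrically with $X$, so $\|w\|_{\widehat{H}^s(\mathbb{R})}=\|\widehat{w}\|_X$. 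Hence, given $w\in\widehat{H}^s(\mathbb{R})$ and $\varepsilon>0$, it suffices to produce $\phi\in C_0^\infty(\mathbb{R})$ with $\|\widehat{w}-\widehat{\phi}\|_X<\varepsilon$.

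First I would approximate $\widehat{w}$ in the $X$-norm by a compactly supported smooth function. Since $\widehat{w}\in X$, dominated convergence gives $\mathbbm{1}_{(-N,N)}\widehat{w}\to\widehat{w}$ in $X$; on $(-N,N)$ the weight $1+|2\pi\xi|^{2s}$ is bounded, so the $X$- and $L^2$-norms are equivalent there, and $C_0^\infty$ is $L^2$-dense. Thus I can choose $g\in C_0^\infty(\mathbb{R})$ with $\|\widehat{w}-g\|_X<\varepsilon/2$. Setting $v:=g^\vee$, the inverse Plancherel transform of a $C_0^\infty$ (hence Schwartz) function is Schwartz, so $v\in\mathcal{S}(\mathbb{R})$ with $\widehat{v}=g$, giving $\|w-v\|_{\widehat{H}^s(\mathbb{R})}=\|\widehat{w}-g\|_X<\varepsilon/2$.

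The function $v$ is smooth and rapidly decreasing but not compactly supported (its transform $g$ has compact support). I would fix $\chi\in C_0^\infty(\mathbb{R})$ with $\chi\equiv1$ on $(-1,1)$ and put $v_R(x):=\chi(x/R)\,v(x)\in C_0^\infty(\mathbb{R})$, then show $\|v-v_R\|_{\widehat{H}^s(\mathbb{R})}\to0$ as $R\to\infty$ and select $R$ with $\|v-v_R\|_{\widehat{H}^s(\mathbb{R})}<\varepsilon/2$; the triangle inequality then finishes the argument, since $\phi:=v_R\in C_0^\infty(\mathbb{R})$.

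The main obstacle is controlling the nonlocal seminorm $|v-v_R|_{\widehat{H}^s(\mathbb{R})}$ under the spatial cutoff, because multiplication in $x$ becomes convolution against $\widehat{\chi}(R\,\cdot)$ on the Fourier side and does not interact simply with the weight $(2\pi\xi)^s$. I would bypass this by comparing with the nearest integer order. Let $m\in\mathbb{N}_0$ satisfy $m\ge s$. The elementary bound $1+|2\pi\xi|^{2s}\le C\big(1+|2\pi\xi|^{2m}\big)$ (the ratio is continuous on $[0,\infty)$ and vanishes at infinity) yields a continuous embedding $\widehat{H}^m(\mathbb{R})\hookrightarrow\widehat{H}^s(\mathbb{R})$, so $\|v-v_R\|_{\widehat{H}^s(\mathbb{R})}\le C\|v-v_R\|_{\widehat{H}^m(\mathbb{R})}$. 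Since $v-v_R\in\mathcal{S}(\mathbb{R})$, Plancherel together with $\widehat{D^\alpha u}=(2\pi i\xi)^\alpha\widehat{u}$ makes $\|v-v_R\|_{\widehat{H}^m(\mathbb{R})}$ comparable to $\big(\sum_{\alpha=0}^{m}\|D^\alpha(v-v_R)\|_{L^2(\mathbb{R})}^2\big)^{1/2}$. Finally the Leibniz rule $D^\alpha\big(\chi(\cdot/R)v\big)=\sum_{\beta=0}^{\alpha}\binom{\alpha}{\beta}R^{-\beta}(D^\beta\chi)(\cdot/R)\,D^{\alpha-\beta}v$ splits into the term $\chi(\cdot/R)D^\alpha v\to D^\alpha v$ in $L^2(\mathbb{R})$ (dominated convergence, using $D^\alpha v\in L^2(\mathbb{R})$) and terms carrying a factor $R^{-\beta}\to0$ with the remaining factors uniformly bounded, so each $\|D^\alpha(v-v_R)\|_{L^2(\mathbb{R})}\to0$. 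This reduces the fractional difficulty to the classical integer-order product rule.
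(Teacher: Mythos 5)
The paper does not prove this statement at all: it is quoted as a classical fact with a citation to the literature, so there is no internal argument to compare against. Your proposal supplies the standard self-contained proof, and it is correct. The two-step structure --- (1) pass to the weighted space $L^2(\mathbb{R},(1+|2\pi\xi|^{2s})\,{\rm d}\xi)$, truncate and mollify on the Fourier side to get a Schwartz approximant $v=g^\vee$, then (2) cut off $v$ in physical space and control the error by dominating the fractional norm with an integer-order norm $\widehat{H}^m(\mathbb{R})$, $m\ge s$, where the Leibniz rule applies --- is exactly the usual route, and each step checks out: the truncation converges by dominated convergence, the weight is bounded on the (bounded) support of the mollified approximant so $L^2$-density suffices there, $1+|2\pi\xi|^{2s}\le 2\bigl(1+|2\pi\xi|^{2m}\bigr)$ gives the embedding, and the cutoff error tends to zero term by term. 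Two small points worth recording. First, since the paper works with real-valued functions, you should symmetrize the Fourier-side approximant, replacing $g(\xi)$ by $\tfrac12\bigl(g(\xi)+\overline{g(-\xi)}\bigr)$; this keeps it in $C_0^\infty$, does not increase the error because $\widehat{w}$ already satisfies $\widehat{w}(-\xi)=\overline{\widehat{w}(\xi)}$ and the weight is even, and makes $v=g^\vee$ real. Second, when you invoke the equivalence of the $X$- and $L^2$-norms ``on $(-N,N)$,'' make explicit that the mollified approximant of $\mathbbm{1}_{(-N,N)}\widehat{w}$ is supported in a slightly larger bounded interval on which the weight is still bounded; otherwise the passage from an $L^2$ bound to an $X$ bound is not justified. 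Neither point is a genuine gap.
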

\begin{theorem}[\cite{MR2424078}, p. 252]\label{thm:ImbedingTheorem}
Let $s\geq 0$. $u\in W^{s,2}(\mathbb{R})$ if and only if $u\in \widehat{H}^s(\mathbb{R})$. In addition,  $W^{s_1,2}(\mathbb{R})\subseteq{W^{s_2,2}(\mathbb{R})}$ for $s_1\geq s_2$.
\end{theorem}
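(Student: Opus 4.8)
The plan is to reduce both assertions to the Fourier-side description in \Cref{thm:FTHsR}, handling the integer and the fractional cases separately, the latter by complex interpolation.

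First I would settle the integer case $s=m\in\mathbb{N}_0$. By Plancherel's theorem (\Cref{thm:PAR}), the weak derivative is intertwined with multiplication by $(2\pi i\xi)^{\alpha}$, i.e. $\widehat{D^{\alpha}w}=(2\pi i\xi)^{\alpha}\widehat{w}$, so $w\in W^{m,2}(\mathbb{R})$ if and only if $(2\pi\xi)^{\alpha}\widehat{w}\in L^2(\mathbb{R})$ for every $0\le\alpha\le m$. The three weights $1+|2\pi\xi|^{2m}$, $\sum_{\alpha=0}^{m}|2\pi\xi|^{2\alpha}$, and $(1+|2\pi\xi|^2)^{m}$ are mutually comparable (up to constants depending only on $m$), so $\|w\|_{W^{m,2}}$ and $\|w\|_{\widehat{H}^m}$ are equivalent norms and $W^{m,2}(\mathbb{R})=\widehat{H}^m(\mathbb{R})$.

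For non-integer $s$, with $m$ the smallest integer exceeding $s$, I would exploit that complex interpolation is functorial under the unitary Plancherel transform $\mathcal{F}$. After replacing the norm of $\widehat{H}^{\sigma}(\mathbb{R})$ by the equivalent one generated by the weight $(1+|2\pi\xi|^2)^{\sigma}$, the map $\mathcal{F}$ is an isometry of $\widehat{H}^{\sigma}(\mathbb{R})$ onto the weighted space $L^2\!\bigl(\mathbb{R};(1+|2\pi\xi|^2)^{\sigma}\,{\rm d}\xi\bigr)$. The Stein--Weiss interpolation theorem for weighted $L^2$ spaces gives $[L^2(w_0),L^2(w_1)]_{\theta}=L^2\!\bigl(w_0^{1-\theta}w_1^{\theta}\bigr)$; taking $w_0\equiv 1$, $w_1=(1+|2\pi\xi|^2)^{m}$, and $\theta=s/m$ produces the weight $(1+|2\pi\xi|^2)^{s}$. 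Transporting this back through $\mathcal{F}^{-1}$ and invoking the integer case $\widehat{H}^{m}=W^{m,2}$, I conclude $W^{s,2}(\mathbb{R})=[L^2(\mathbb{R}),W^{m,2}(\mathbb{R})]_{s/m}=[\widehat{H}^{0},\widehat{H}^{m}]_{s/m}=\widehat{H}^{s}(\mathbb{R})$.

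The embedding $W^{s_1,2}(\mathbb{R})\subseteq W^{s_2,2}(\mathbb{R})$ for $s_1\ge s_2\ge 0$ is then immediate: the pointwise bound $1+|2\pi\xi|^{2s_2}\le 2\,(1+|2\pi\xi|^{2s_1})$, obtained by splitting into the regions $|2\pi\xi|\le 1$ and $|2\pi\xi|>1$, shows the $\widehat{H}^{s_2}$-weight is dominated by the $\widehat{H}^{s_1}$-weight, whence $\|w\|_{\widehat{H}^{s_2}}\le C\|w\|_{\widehat{H}^{s_1}}$ and $\widehat{H}^{s_1}(\mathbb{R})\subseteq\widehat{H}^{s_2}(\mathbb{R})$. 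The only genuinely non-elementary ingredient is the weighted-$L^2$ interpolation identity together with the fact that complex interpolation commutes with the isometry $\mathcal{F}$; I expect the main obstacle to lie precisely in justifying $[\widehat{H}^{0},\widehat{H}^{m}]_{s/m}=\widehat{H}^{s}$, while the integer case and the embedding are routine Plancherel-and-weight estimates.
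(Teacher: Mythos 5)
This statement is one the paper deliberately does not prove: it is imported from the literature (\cite{MR2424078}, p.~252), consistent with the remark at the start of \Cref{sec:CFSS} that well-established facts about Sobolev spaces are stated without proof. Your argument is a correct, essentially self-contained substitute, and it follows the standard textbook route. The integer case is right: the comparability of the three weights $1+|2\pi\xi|^{2m}$, $\sum_{\alpha=0}^{m}|2\pi\xi|^{2\alpha}$, and $(1+|2\pi\xi|^{2})^{m}$ is elementary, though note that the step ``$D^{\alpha}w\in L^{2}$ iff $(2\pi i\xi)^{\alpha}\widehat{w}\in L^{2}$'' for \emph{weak} derivatives itself needs the same kind of test-function/Plancherel argument that the paper carries out in the fractional setting in \Cref{thm:EquivalenceOfSpaces}; you should either cite it or observe it is the $s\in\mathbb{N}_0$ instance of that theorem combined with \Cref{def:WFD} and \Cref{rem:Notations}. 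The fractional case correctly identifies the two nontrivial external ingredients: the Stein--Weiss interpolation identity $[L^{2}(w_0),L^{2}(w_1)]_{\theta}=L^{2}(w_0^{1-\theta}w_1^{\theta})$ and the fact that complex interpolation is preserved under an isomorphism of the couple (here the Plancherel transform), plus the harmless replacement of the weight $1+|2\pi\xi|^{2s}$ by the equivalent $(1+|2\pi\xi|^{2})^{s}$. The monotone embedding at the end is a routine pointwise weight comparison and is fine. What your approach buys over the paper's bare citation is an explicit reduction of the equivalence to named interpolation facts; what it costs is that those facts (Stein--Weiss, functoriality of $[\cdot,\cdot]_{\theta}$) are themselves nontrivial theorems that you are invoking rather than proving, so the argument is a correct proof sketch conditional on standard interpolation theory rather than a from-scratch proof.
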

\begin{remark}
Notice the  particular case, if $s=0$, $L^2(\mathbb{R})=\widehat{H}^0(\mathbb{R})=W^{0,2}(\mathbb{R})$.
\end{remark}
%
\begin{comment}
%
\begin{theorem}[Sobolev Embedding Theorem, \cite{MR1787146}, p. 220]
Let $\Omega \subset{\mathbb{R}} $ be open, and $m,k\in N_0$ with $m>k+n/2$. If $f\in W^{m,2}(\Omega)$, then there exists a k-times continuously differentiable function on $\Omega$ that is equal to $f$ almost everywhere. In other words, the class of equivalent functions $f\in W^{m,2}(\Omega)$ has a representative in $C^k(\Omega)$.
\end{theorem}
%
\end{comment}
%
%
%
\subsection{Connections between Sobolev Spaces and R-L Derivatives}
First, a generalization of the usual integer-order weak derivatives to include weak fractional R-L derivatives is presented.
 \begin{definition}[Weak Fractional R-L Derivatives]\label{def:WFD}
 Let $s>0$, and $v, w \in L^1_{loc}(\mathbb{R})$.  The function $w$ is called weak $s$-order left fractional derivative of $v$, written as  $\boldsymbol{D}^s v = w$, provided
 \begin{equation}
(v, \boldsymbol{D}^{s* } \psi) = (w, \psi),~\forall \psi \in C_0^\infty(\mathbb{R}).
 \end{equation}
In a similar fashion, $w$ is weak $s$-order right fractional derivative of $v$, written as $\boldsymbol{D}^{s*} v=w$,  provided
 \begin{equation}
 (v, \boldsymbol{D}^{s}\psi) = (w, \psi), ~\forall \psi \in C_0^\infty(\mathbb{R}).
 \end{equation}
 \end{definition}
 \begin{lemma}[Uniqueness of Weak Fractional R-L Derivatives]
If $v\in L^1_{loc}(\mathbb{R})$ has a weak $s$-order left (or right) fractional derivative, then it is unique up to a set of zero measure.
 \begin{proof}
We only show the uniqueness for the left fractional derivative.  Assume $w_1,w_2\in L^1_{loc}(\mathbb{R})$ are both weak $s$-order fractional derivatives of $v$, namely,   $(w_1,\psi) = (v,\boldsymbol{D}^{s*} \psi) =(w_2,\psi)$, for all $\psi\in C_0^\infty(\mathbb{R})$. This implies $(w_1-w_2,\psi)=0$ for all $\psi\in C_0^\infty(\mathbb{R})$, whence $w_1=w_2$ a.e.. (e.g. \cite{MR2759829}, Corollary 4.24, p.110).
 \end{proof}
 \end{lemma}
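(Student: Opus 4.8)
The plan is to reduce the statement to the fundamental lemma of the calculus of variations (the du Bois--Reymond lemma), which asserts that a locally integrable function pairing to zero against every test function must vanish almost everywhere. I would treat only the left-derivative case and remark that the right case is identical with $\boldsymbol{D}^{s*}$ and $\boldsymbol{D}^{s}$ interchanged in \Cref{def:WFD}.

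First I would suppose that $w_1, w_2 \in L^1_{loc}(\mathbb{R})$ are both weak $s$-order left fractional derivatives of $v$. By the defining relation in \Cref{def:WFD}, each satisfies $(w_j, \psi) = (v, \boldsymbol{D}^{s*}\psi)$ for every $\psi \in C_0^\infty(\mathbb{R})$. The crucial observation is that the right-hand side $(v, \boldsymbol{D}^{s*}\psi)$ is the same quantity for $j = 1$ and $j = 2$, since it depends only on $v$ and $\psi$ and not on the candidate derivative. Hence $(w_1, \psi) = (w_2, \psi)$, and by linearity of the $L^2$-pairing (which is well defined here because $\psi$ has compact support and $w_1, w_2 \in L^1_{loc}$) we obtain $(w_1 - w_2, \psi) = 0$ for all $\psi \in C_0^\infty(\mathbb{R})$.

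It then remains to conclude $w_1 = w_2$ almost everywhere from the vanishing of this pairing against all test functions. This is exactly the content of the standard annihilation lemma for locally integrable functions, and I would invoke it directly (it is available in any reference on distributions or Sobolev theory, e.g.\ the Corollary~4.24 cited alongside the statement). I expect this to be the only nontrivial input, but it is entirely off-the-shelf; no regularity of $v$ beyond $v \in L^1_{loc}(\mathbb{R})$ and no properties of $\boldsymbol{D}^{s*}$ are required, because the definition pairs the derivative $w$ \emph{directly} against $\psi$ rather than against a transformed test function. In particular I would emphasize that one does \emph{not} need density or surjectivity of the map $\psi \mapsto \boldsymbol{D}^{s*}\psi$ on any space; the argument is purely formal once the two defining identities are subtracted.
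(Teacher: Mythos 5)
Your proposal is correct and follows the paper's proof essentially verbatim: subtract the two defining identities to get $(w_1-w_2,\psi)=0$ for all $\psi\in C_0^\infty(\mathbb{R})$, then invoke the standard annihilation lemma for $L^1_{loc}$ functions (the same Corollary~4.24 the paper cites). Your added remarks---that no properties of $\boldsymbol{D}^{s*}$ or density of its range are needed---are accurate but do not change the argument.
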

   \begin{definition}\label{def:FractionalSobolevSpaces}
 Given $s\geq 0$, let
 \begin{equation*}
  \widetilde{W}^{s}_L(\mathbb{R})=\{v\in L^2(\mathbb{R}): \boldsymbol{D}^s v \in L^2(\mathbb{R})\}, ~~
  \widetilde{W}^{s}_R(\mathbb{R})=\{v\in L^2(\mathbb{R}): \boldsymbol{D}^{s*} v \in L^2(\mathbb{R})\}, 
 \end{equation*}
where $\boldsymbol{D}^s v$ and $\boldsymbol{D}^{s*} v$ are understood as the weak fractional derivative of \Cref{def:WFD}. A semi-norm
\begin{equation}
|v|_L:= \|\boldsymbol{D}^s v\|_{L^2(\mathbb{R})} ~~\text{for}~~\widetilde{W}^{s}_L(\mathbb{R}) ~~\text{and}~~ |v|_R:= \|\boldsymbol{D}^{s*} v\|_{L^2(\mathbb{R})}
~~\text{for}~~\widetilde{W}^{s}_R(\mathbb{R}),
\end{equation}
is given with the corresponding norm
$\quad \|v\|_{\star}:=(\|v\|^2_{L^2(\mathbb{R})}+|v|_\star^2)^{1/2}$, with $\star=L,R$.
 \end{definition}
\begin{remark}\label{rem:Notations}
By convention,  $\widetilde{W}^{0}_L(\mathbb{R})=\widetilde{W}^{0}_R(\mathbb{R})
=L^2(\mathbb{R})$. If $s$ is a positive integer, by definition,  $\boldsymbol{D}^s=D^s$, and $\boldsymbol{D}^{s*}=(-1)^sD^s$, so $\boldsymbol{D}^1=\boldsymbol{D}= D$, and $\boldsymbol{D}^{1*}=\boldsymbol{D}^*=-D$.  
\end{remark}

It is obvious that $\widetilde{W}^{s}_L(\mathbb{R})$ and $\widetilde{W}^{s}_R(\mathbb{R})$ are normed linear spaces. The following theorem describes a characterization of  Sobolev space $\widehat{H}^s(\mathbb{R})$ in terms of these spaces.
\begin{theorem}\label{thm:EquivalenceOfSpaces}
Given $s\geq 0$, $\widetilde{W}^{s}_L(\mathbb{R})$, $\widetilde{W}^{s}_R(\mathbb{R})$ and $\widehat{H}^s(\mathbb{R})$ are identical spaces with equal norms.
\end{theorem}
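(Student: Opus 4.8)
The plan is to transport everything to the Fourier (Plancherel) side, where the defining condition of $\widehat{H}^s(\mathbb{R})$ and the action of the R--L derivatives both reduce to multiplication by $(\pm 2\pi i\xi)^s$, whose modulus is $|2\pi\xi|^s$ in either case (\Cref{rem:ComplexPowerFunctions}). Since $s=0$ is trivial by the standing conventions that all three spaces equal $L^2(\mathbb{R})$, I assume $s>0$. The engine of the whole argument is one Parseval identity: for every $v\in L^2(\mathbb{R})$ and every $\psi\in C_0^\infty(\mathbb{R})$,
\[
(v,\boldsymbol{D}^{s*}\psi)=\int_{\mathbb{R}}\widehat v\,\overline{\widehat{\boldsymbol{D}^{s*}\psi}}\,{\rm d}\xi=\int_{\mathbb{R}}(2\pi i\xi)^s\,\widehat v\,\overline{\widehat\psi}\,{\rm d}\xi .
\]
The first equality is Plancherel's theorem (\Cref{thm:PAR}) applied with the real-valued factor $\boldsymbol{D}^{s*}\psi$; the identity $\widehat{\boldsymbol{D}^{s*}\psi}=(-2\pi i\xi)^s\widehat\psi$ is \Cref{lem:FTFD}, legitimate because \Cref{prop:Boundedness} gives $\boldsymbol{D}^{s*}\psi\in L^1(\mathbb{R})\cap L^2(\mathbb{R})$, so its Plancherel transform coincides with $\mathcal F(\boldsymbol{D}^{s*}\psi)$; the last equality uses $\overline{(-2\pi i\xi)^s}=(2\pi i\xi)^s$.

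First I would prove $\widehat{H}^s(\mathbb{R})\subseteq\widetilde W^s_L(\mathbb{R})$ with matching seminorms. Given $v\in\widehat{H}^s(\mathbb{R})$, the function $(2\pi i\xi)^s\widehat v$ lies in $L^2(\mathbb{R})$ since $\||2\pi\xi|^s\widehat v\|_{L^2(\mathbb{R})}=|v|_{\widehat H^s(\mathbb{R})}<\infty$; set $w:=\big((2\pi i\xi)^s\widehat v\big)^\vee\in L^2(\mathbb{R})$, so $\widehat w=(2\pi i\xi)^s\widehat v$. Feeding this into the Parseval identity yields $(w,\psi)=\int_{\mathbb{R}}\widehat w\,\overline{\widehat\psi}=\int_{\mathbb{R}}(2\pi i\xi)^s\widehat v\,\overline{\widehat\psi}=(v,\boldsymbol{D}^{s*}\psi)$ for all $\psi\in C_0^\infty(\mathbb{R})$, which is precisely the statement $w=\boldsymbol{D}^s v$ in the weak sense of \Cref{def:WFD}. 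Hence $v\in\widetilde W^s_L(\mathbb{R})$ and $|v|_L=\|w\|_{L^2(\mathbb{R})}=\||2\pi\xi|^s\widehat v\|_{L^2(\mathbb{R})}=|v|_{\widehat H^s(\mathbb{R})}$.

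For the reverse inclusion $\widetilde W^s_L(\mathbb{R})\subseteq\widehat H^s(\mathbb{R})$, take $v\in\widetilde W^s_L(\mathbb{R})$ with $w:=\boldsymbol{D}^s v\in L^2(\mathbb{R})$. Combining the defining relation $(w,\psi)=(v,\boldsymbol{D}^{s*}\psi)$ with the Parseval identity gives $\int_{\mathbb{R}}\big(\widehat w-(2\pi i\xi)^s\widehat v\big)\overline{\widehat\psi}\,{\rm d}\xi=0$ for all $\psi\in C_0^\infty(\mathbb{R})$. The goal is to deduce $\widehat w=(2\pi i\xi)^s\widehat v$ almost everywhere; once that holds, $|2\pi\xi|^s\widehat v=|\widehat w|\in L^2(\mathbb{R})$ forces $v\in\widehat H^s(\mathbb{R})$ with $|v|_{\widehat H^s(\mathbb{R})}=\|w\|_{L^2(\mathbb{R})}=|v|_L$. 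The difficulty is that $h:=\widehat w-(2\pi i\xi)^s\widehat v$ is known a priori only to be in $L^2_{loc}(\mathbb{R})$ (the weight $|2\pi\xi|^s$ is bounded on bounded sets), so one cannot pass from ``pairs to zero against every $\overline{\widehat\psi}$'' to ``$h=0$ a.e.'' by a bare $L^2$ density argument. I would resolve this by observing that $h$ is a tempered distribution (it becomes $L^2$ after dividing by $(1+\xi^2)^{s/2}$), rewriting $\overline{\widehat\psi}=\widehat{\check\psi}$ with $\check\psi(x)=\psi(-x)\in C_0^\infty(\mathbb{R})$, and reading $\int_{\mathbb{R}} h\,\widehat{\check\psi}\,{\rm d}\xi=0$ as $\langle\widehat h,\check\psi\rangle=0$ for all $\check\psi\in C_0^\infty(\mathbb{R})$, whence $\widehat h=0$ and $h=0$.

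I expect this last identification to be the main obstacle, precisely because the Fourier images $\{\widehat\psi:\psi\in C_0^\infty(\mathbb{R})\}$ are not compactly supported while $h$ fails to be globally $L^2$, so localization requires the tempered-distribution detour above (or an equivalent mollification argument). Everything else follows mechanically: the right-sided case $\widetilde W^s_R(\mathbb{R})$ repeats the computation verbatim with the roles of $(2\pi i\xi)^s$ and $(-2\pi i\xi)^s$ interchanged, using $|(-2\pi i\xi)^s|=|2\pi\xi|^s$, so the same $\widehat H^s(\mathbb{R})$ condition emerges. Adding the common term $\|v\|_{L^2(\mathbb{R})}^2$ to the matching seminorms upgrades the seminorm equalities to full norm equalities $\|v\|_L=\|v\|_R=\|v\|_{\widehat H^s(\mathbb{R})}$, giving $\widetilde W^s_L(\mathbb{R})=\widetilde W^s_R(\mathbb{R})=\widehat H^s(\mathbb{R})$ as identical spaces with equal norms.
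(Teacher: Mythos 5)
Your proposal is correct, and the forward inclusion $\widehat{H}^s(\mathbb{R})\subseteq\widetilde{W}^s_L(\mathbb{R})$ is essentially identical to the paper's (define $v_s=((2\pi i\xi)^s\widehat v)^\vee$ and verify the weak-derivative identity via Parseval). Where you genuinely diverge is the reverse inclusion. You correctly isolate the crux --- that $h=\widehat{\boldsymbol{D}^s v}-(2\pi i\xi)^s\widehat v$ is only locally square integrable while the available test objects $\overline{\widehat\psi}$ are never compactly supported, so a bare $L^2$ density argument cannot conclude $h=0$ --- and you resolve it by viewing $h$ as a tempered distribution, writing $\overline{\widehat\psi}=\widehat{\check\psi}$, and invoking injectivity of the distributional Fourier transform. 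The paper instead stays entirely within classical $L^1$/$L^2$ harmonic analysis: it tests against all translates $\tau_h\psi$, which converts the weak-derivative identity into the convolution identity $v*A=B*\Psi$ valid for every shift, applies the convolution theorem to get $\widehat v\,\widehat A=\widehat B\,\widehat\Psi$ pointwise a.e., and then uses dilations $\Pi_\epsilon\psi$ of a single test function to produce intervals $(\epsilon c,\epsilon d)$ sweeping out $\mathbb{R}$ on which $\mathcal F(\psi)\neq 0$, forcing the factor $(2\pi i\xi)^s\widehat v-\widehat{\boldsymbol{D}^s v}$ to vanish everywhere. Your route is shorter and arguably cleaner, at the cost of importing the tempered-distribution calculus (density of $C_0^\infty(\mathbb{R})$ in $\mathcal S$, Fourier inversion on $\mathcal S'$), machinery the paper deliberately avoids --- its appendix cites only Plancherel, Parseval, and the $L^2$--$L^1$ convolution theorem. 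The paper's translation--dilation argument buys self-containment within those elementary tools and reuses the same dilation trick later in its density lemma; your argument buys brevity and makes the analytic obstruction explicit. Both are sound, and your concluding steps (the verbatim right-sided case via $\overline{(\mp 2\pi i\xi)^s}=(\pm 2\pi i\xi)^s$, and upgrading seminorm equality to norm equality by adding $\|v\|_{L^2(\mathbb{R})}^2$) match the paper.
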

\begin{proof}
The proof only demonstrates $\widetilde{W}^{s}_L(\mathbb{R}) = \widehat{H}^s(\mathbb{R})$ and their norms equality, noting that the case for $\widetilde{W}^{s}_R(\mathbb{R}) = \widehat{H}^s(\mathbb{R})$ can be analogously established. By the construction of respective norms, equality of norms is achieved by showing equality of seminorms.

First we show that $\widehat{H}^s(\mathbb{R})\subseteq \widetilde{W}^{s}_L(\mathbb{R})$. Pick any $v\in \widehat{H}^s(\mathbb{R})$, which implies that $(2\pi i \xi)^s\widehat{v}\in L^2(\mathbb{R})$. In turn, this gives a justification for setting
$v_s:=((2\pi i \xi)^s \widehat{v})^{\vee}$, where $^\vee$ denotes the inverse of Plancherel Transform. Furthermore, \Cref{thm:PAR} (Plancherel) guarantees that $v_s\in L^2(\mathbb{R})$. An application of \Cref{thm:ParsevalFormula} gives
\begin{equation} \label{eq:mummyone}
(v, \boldsymbol{D}^{s*}\psi)=(\overline{v},\boldsymbol{D}^{s*}\psi)
=(\overline{\widehat{v}},\widehat{\boldsymbol{D}^{s*}\psi}), ~~\forall \psi \in C_0^\infty(\mathbb{R}).
\end{equation}
Next we use \Cref{lem:FTFD} to $\widehat{\boldsymbol{D}^{s*}\psi}$ and utilize \Cref{thm:ParsevalFormula} to yield
\begin{equation} \label{eq:mummytwo}
(\overline{\widehat{v}},\widehat{\boldsymbol{D}^{s*}\psi})
=(\overline{\widehat{v}},(-2\pi i\xi)^s\widehat{\psi})
=(\overline{(2\pi i\xi)^s\widehat{v}},\widehat{\psi})
=(v_s,\psi).
\end{equation}
By combining \cref{eq:mummytwo} with \cref{eq:mummyone}, we get $(v, \boldsymbol{D}^{s*}\psi) = (v_s,\psi)$ for any $\psi \in C_0^\infty(\mathbb{R})$, which according to \Cref{def:WFD} implies that
$\boldsymbol{D}^s v=v_s$, and thus $v \in \widetilde{W}^s_L(\mathbb{R})$. It is straightforward to see the equality of semi-norms, namely,
\[|v|_{\widehat{H}^s(\mathbb{R})}=\|(2\pi i\xi)^s\widehat{v}\|_{L^2(\mathbb{R})}=\|v_s\|_{L^2(\mathbb{R})}=|v|_L.\]

It remains now to show $\widehat{H}^s(\mathbb{R})\supseteq \widetilde{W}^{s}_L(\mathbb{R})$. Pick any $v\in \widetilde{W}^{s}_L(\mathbb{R})$. By \Cref{def:WFD},
\begin{equation}\label{equ:ReplacingFunction}
(v,\boldsymbol{D}^{s*} \psi)=(\boldsymbol{D}^s v,\psi), ~~\forall \psi
\in C_0^\infty(\mathbb{R})
\end{equation}
Fix $h\in \mathbb{R}$ and use $\tau_h\psi \in C_0^\infty(\mathbb{R})$ in \cref{equ:ReplacingFunction} to obtain
\begin{equation}
(v,\boldsymbol{D}^{s*} (\tau_h\psi))
=(v,\tau_h(\boldsymbol{D}^{s*}\psi))
=(\boldsymbol{D}^s v,\tau_h\psi),
\end{equation}
where \Cref{pro:TranslationDerivative} was used. For convenience, set 
\begin{equation}
A(z)=[\boldsymbol{D}^{s*}\psi](-z),\quad B(z)=[\boldsymbol{D}^s v](z), ~~\Psi(z) = \psi(-z).
\end{equation}
Then
\begin{equation}
\int_\mathbb{R} v(t)  A(h-t) \, {\rm d}t
=\int_{\mathbb{R}}B(t) \Psi(h-t)\, {\rm d}t, ~~ \forall h\in \mathbb{R},
\end{equation}
or in other words,
\begin{equation}\label{equ:ConvolutionCommunicateFourier}
[v*A](h)=[B*\Psi](h). 
\end{equation}
Notice that $v, \Psi \in L^2(\mathbb{R})$, and $A, B \in L^1(\mathbb{R})$ by \Cref{prop:Boundedness}.  \Cref{thm:ConvolutionFourierTransform} applied to \cref{equ:ConvolutionCommunicateFourier} gives
 \begin{equation}\label{equ:CrossCorrelation}
\widehat{v}\widehat{A}=\widehat{B}\widehat{\Psi}.
\end{equation} 
Since $\Psi(z) = \psi(-z)$ and $\psi \in C_0^\infty(\mathbb{R})$, then $\widehat{\Psi} = \overline{\widehat{\psi}} = \overline{\mathcal{F}(\psi)}$. In a similar fashion, and using \Cref{lem:FTFD}, $\widehat{A} = \overline{\widehat{\boldsymbol{D}^{s*}\psi}} =  \overline{(-2\pi i\xi)^s\mathcal{F}(\psi)} = (2\pi i\xi)^s \overline{\mathcal{F}(\psi)}$.
Putting these back to \cref{equ:CrossCorrelation} gives
\begin{equation}\label{equ:Nonzero}
\left( (2\pi i\xi)^s\widehat{v}-\widehat{\boldsymbol{D}^s v}\right)\overline{\mathcal{F}(\psi)}=0~~\forall \psi \in C_0^\infty(\mathbb{R}).
\end{equation}
We claim that \cref{equ:Nonzero} implies that $(2\pi i\xi)^s\widehat{v}=\widehat{\boldsymbol{D}^s v}$. Since $\psi \in C_0^\infty(\mathbb{R})$ is arbitrary, choose a non-zero $\psi$ such that by \Cref{thm:PAR} (Plancherel), $\mathcal{F}(\psi)\neq 0$. Without loss of generality, suppose $[\mathcal{F}(\psi)](a)\neq 0$ at point $a\in \mathbb{R}$. Since $\mathcal{F}(\psi)$ is continuous, there exists an open interval $(c,d)$ containing $a$ such that $\mathcal{F}(\psi)\neq 0$ in $(c,d)$. Notice that
$\Pi_\epsilon \psi \in C_0^\infty(\mathbb{R})$ and by the Fourier Transform property of dilation operator,  $[\mathcal{F}(\Pi_\epsilon \psi)](\xi) = \epsilon^{-1} [\mathcal{F}(\psi)](\epsilon^{-1}\xi)$ for arbitrary $\epsilon>0$. This means $\mathcal{F}(\Pi_\epsilon \psi) \neq 0$ in $(\epsilon c, \epsilon d)$. With this fact in place and using $\Pi_\epsilon \psi$ as a test function in \cref{equ:Nonzero} implies
\begin{equation}
(2\pi i\xi)^s\widehat{v}-\widehat{\boldsymbol{D}^s v}=0,\quad \text{in $ (\epsilon c, \epsilon d)$}.
\end{equation}
Since $\epsilon$ is arbitrary, we conclude that
\begin{equation}\label{equ:ThelastOne}
(2\pi i\xi)^s\widehat{v}=\widehat{\boldsymbol{D}^s v},\quad \text{in $\mathbb{R}$}.
\end{equation}
Therefore $(2\pi i\xi)^s\widehat{v}\in L^2(\mathbb{R})$, and thus $v\in \widehat{H}^s(\mathbb{R})$. Furthermore, \cref{equ:ThelastOne} implies $|v|_{\widehat{H}^s(\mathbb{R})}=|v|_L$.
\end{proof}
The preceding theorem reveals that $\boldsymbol{D}^sv$ and $\boldsymbol{D}^{s*}v$ always makes sense for $v\in \widehat{H}^s(\mathbb{R})$, $s>0$. The following results will be utilized later.
\begin{corollary}\label{cor:DensityOfSobolevsSpaces}
$C_0^\infty(\mathbb{R})$ is dense in $\widetilde{W}^s_L(\mathbb{R})$ and  $\widetilde{W}^s_R(\mathbb{R})$.
\end{corollary}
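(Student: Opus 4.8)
The plan is to reduce this corollary entirely to the two results already in hand: the norm-identity of \Cref{thm:EquivalenceOfSpaces} and the density statement of \Cref{thm:DensityOfSobolevSpaces}. The essential observation is that \Cref{thm:EquivalenceOfSpaces} asserts not merely a set-theoretic equality $\widetilde{W}^s_L(\mathbb{R}) = \widehat{H}^s(\mathbb{R})$ but an \emph{isometric} one, i.e. $\|v\|_L = \|v\|_{\widehat{H}^s(\mathbb{R})}$ for every $v$ in the common space. Consequently the two spaces carry exactly the same norm topology, so a sequence converges in $\|\cdot\|_L$ if and only if it converges in $\|\cdot\|_{\widehat{H}^s(\mathbb{R})}$ to the same limit.

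First I would record that $C_0^\infty(\mathbb{R}) \subseteq \widetilde{W}^s_L(\mathbb{R})$. This is immediate, since $C_0^\infty(\mathbb{R}) \subseteq \widehat{H}^s(\mathbb{R})$ (test functions lie in $L^2(\mathbb{R})$ and their Plancherel transforms decay rapidly, so $(1+|2\pi\xi|^{2s})|\widehat{\psi}|^2$ is integrable), and the set equality of \Cref{thm:EquivalenceOfSpaces} then places these functions inside $\widetilde{W}^s_L(\mathbb{R})$ as well. Thus the candidate dense subset genuinely sits inside the target space.

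Next I would carry out the density argument directly from the definition. Fix any $v \in \widetilde{W}^s_L(\mathbb{R})$ and any $\varepsilon > 0$. By \Cref{thm:EquivalenceOfSpaces}, $v \in \widehat{H}^s(\mathbb{R})$, so \Cref{thm:DensityOfSobolevSpaces} supplies a $\psi \in C_0^\infty(\mathbb{R})$ with $\|v - \psi\|_{\widehat{H}^s(\mathbb{R})} < \varepsilon$. Since $v - \psi$ again belongs to the common space, the norm identity of \Cref{thm:EquivalenceOfSpaces} gives $\|v - \psi\|_L = \|v - \psi\|_{\widehat{H}^s(\mathbb{R})} < \varepsilon$. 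As $v$ and $\varepsilon$ were arbitrary, $C_0^\infty(\mathbb{R})$ is dense in $\widetilde{W}^s_L(\mathbb{R})$. The statement for $\widetilde{W}^s_R(\mathbb{R})$ follows verbatim, replacing $\|\cdot\|_L$ by $\|\cdot\|_R$ and invoking the corresponding half of \Cref{thm:EquivalenceOfSpaces}.

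I do not anticipate a genuine obstacle here, because all of the analytic substance has already been absorbed into \Cref{thm:EquivalenceOfSpaces}; the corollary is a transport of density across an isometric identification. The only point warranting a moment of care is to use the equality of \emph{norms} rather than just the equality of \emph{sets}: density is a topological property, and without the isometry one could only conclude that the two closures coincide as sets, not that an $\varepsilon$-approximation in one norm is an $\varepsilon$-approximation in the other. Since \Cref{thm:EquivalenceOfSpaces} delivers exactly the equality of norms (and hence of seminorms), this subtlety is already resolved and the proof is short.
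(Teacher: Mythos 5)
Your proposal is correct and follows exactly the paper's route: the paper's proof of this corollary is literally a one-line citation of \Cref{thm:EquivalenceOfSpaces} and \Cref{thm:DensityOfSobolevSpaces}, and you have simply spelled out the details, correctly emphasizing that the equality of norms (not just of sets) is what transports density. No gap.
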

\begin{proof}
This is a consequence of \Cref{thm:EquivalenceOfSpaces} and \Cref{thm:DensityOfSobolevSpaces}.
\end{proof}
\begin{corollary}\label{cor:DensityOfC}
$v\in \widehat{H}^{s}(\mathbb{R})$ if and only if there exists a  sequence $\{v_n\}\subset C_0^\infty(\mathbb{R})$ such that $\{v_n\}, \{\boldsymbol{D}^sv_n\}$ are Cauchy sequences in $L^2(\mathbb{R})$ with $\lim_{n\rightarrow \infty}v_n=v$.
Likewise,
$v\in \widehat{H}^{s}(\mathbb{R}) $ if and only if there exists a  sequence $\{v_n\}\subset C_0^\infty(\mathbb{R})$ such that $\{v_n\}, \{\boldsymbol{D}^{s*}v_n\}$ are Cauchy sequences in $L^2(\mathbb{R})$, with $\lim_{n\rightarrow \infty}v_n=v$.
\end{corollary}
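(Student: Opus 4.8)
The plan is to read off this corollary from the isometric identification $\widehat H^s(\mathbb R)=\widetilde W^s_L(\mathbb R)=\widetilde W^s_R(\mathbb R)$ of \Cref{thm:EquivalenceOfSpaces} together with the density statement \Cref{cor:DensityOfSobolevsSpaces}. I would treat only the left-derivative equivalence and note that the right-derivative claim follows verbatim after exchanging the roles of $(\widetilde W^s_L,\boldsymbol D^s,\boldsymbol D^{s*})$ and $(\widetilde W^s_R,\boldsymbol D^{s*},\boldsymbol D^{s})$.

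For the ``only if'' direction I would argue by density. If $v\in\widehat H^s(\mathbb R)$, then $v\in\widetilde W^s_L(\mathbb R)$ by \Cref{thm:EquivalenceOfSpaces}, so \Cref{cor:DensityOfSobolevsSpaces} furnishes $\{v_n\}\subset C_0^\infty(\mathbb R)$ with $\|v_n-v\|_L\to 0$. Since \Cref{def:FractionalSobolevSpaces} gives $\|u\|_L^2=\|u\|_{L^2(\mathbb R)}^2+\|\boldsymbol D^s u\|_{L^2(\mathbb R)}^2$ and $\boldsymbol D^s$ is linear, this one bound simultaneously forces $v_n\to v$ and $\boldsymbol D^s v_n\to\boldsymbol D^s v$ in $L^2(\mathbb R)$; convergent sequences being Cauchy, $\{v_n\}$ and $\{\boldsymbol D^s v_n\}$ are exactly the required Cauchy sequences with $\lim_n v_n=v$.

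For the ``if'' direction, starting from a sequence $\{v_n\}\subset C_0^\infty(\mathbb R)$ as hypothesized, I would first use completeness of $L^2(\mathbb R)$ to get $\boldsymbol D^s v_n\to w$ for some $w\in L^2(\mathbb R)$, and then identify $w$ with the weak left fractional derivative of $v$. The crucial ingredient is that each smooth $v_n$ already satisfies the weak identity
\begin{equation}\label{eq:ibp-smooth}
(v_n,\boldsymbol D^{s*}\psi)=(\boldsymbol D^s v_n,\psi),\qquad\forall\,\psi\in C_0^\infty(\mathbb R),
\end{equation}
which holds because $v_n\in C_0^\infty(\mathbb R)\subset\widehat H^s(\mathbb R)=\widetilde W^s_L(\mathbb R)$: the computation in the proof of \Cref{thm:EquivalenceOfSpaces} shows the weak derivative of $v_n$ has Plancherel transform $(2\pi i\xi)^s\widehat{v_n}$, which by \Cref{lem:FTFD} is also the transform of the classical $\boldsymbol D^s v_n$, so the two agree and \cref{eq:ibp-smooth} is just \Cref{def:WFD}. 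Equivalently, \cref{eq:ibp-smooth} may be obtained directly from \Cref{thm:ParsevalFormula} and \Cref{lem:FTFD}. Passing $n\to\infty$ in \cref{eq:ibp-smooth} with a fixed $\psi$ is then legitimate: by \Cref{prop:Boundedness} both $\psi$ and $\boldsymbol D^{s*}\psi$ belong to $L^2(\mathbb R)$, so the inner products are continuous, and $v_n\to v$, $\boldsymbol D^s v_n\to w$ give $(v,\boldsymbol D^{s*}\psi)=(w,\psi)$ for all $\psi\in C_0^\infty(\mathbb R)$. By \Cref{def:WFD} this says $\boldsymbol D^s v=w\in L^2(\mathbb R)$, hence $v\in\widetilde W^s_L(\mathbb R)=\widehat H^s(\mathbb R)$ by \Cref{thm:EquivalenceOfSpaces}.

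The step I expect to be the main obstacle is the verification of \cref{eq:ibp-smooth}, i.e.\ confirming that for compactly supported smooth data the classical R-L derivative coincides with the weak one and thereby satisfies the fractional integration-by-parts relation. Once that is secured, the remainder is routine: completeness supplies the limit $w$, continuity of the $L^2$ pairing transfers the identity to $v$, and \Cref{thm:EquivalenceOfSpaces} converts the resulting membership $v\in\widetilde W^s_L(\mathbb R)$ back into $v\in\widehat H^s(\mathbb R)$.
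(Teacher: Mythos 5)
Your argument is correct and follows the same route the paper intends: the paper's proof is a one-line appeal to \Cref{thm:EquivalenceOfSpaces} and \Cref{cor:DensityOfSobolevsSpaces}, and your proposal simply fills in the routine details (norm convergence in $\widetilde{W}^s_L(\mathbb{R})$ for the forward direction; $L^2$ completeness plus passage to the limit in the weak-derivative identity for the converse). Your careful verification that the classical and weak R-L derivatives of a $C_0^\infty(\mathbb{R})$ function coincide is a legitimate and worthwhile point that the paper leaves implicit.
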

\begin{proof}
This is a consequence of \Cref{thm:EquivalenceOfSpaces} and \Cref{cor:DensityOfSobolevsSpaces}.
\end{proof}
\begin{remark}
As a consequence of \Cref{cor:DensityOfC}, $\lim_{n\rightarrow \infty}\boldsymbol{D}^{s}v_n=\boldsymbol{D}^{s}v$ and $\lim_{n\rightarrow\infty}\boldsymbol{D}^{s*}v_n=\boldsymbol{D}^{s*}v$.
\end{remark}
%
%%%%%%%%%%%%%%%%%%%%%%%%%%%%%%%%%%%
\section{Stationary Fractional Diffusion-Advection-Reaction Equations} \label{sec:SFDARE}
%%%%%%%%%%%%%%%%%%%%%%%%%%%%%%%%%%%
In this section, we investigate the following Stationary Fractional Diffusion-Advection-Reaction equation: find $u\in \widehat{H}^{2-\mu}(\mathbb{R}) $ such that
\begin{equation}\label{eq:FODE}
\begin{cases}
[Lu](x)=f(x),\quad x\in \mathbb{R},\\
\text{where}\,Lu = p\boldsymbol{D}^{2-\mu} u+q\boldsymbol{D}^{(2-\mu)*}u+aDu+bu, f\in L^2(\mathbb{R}),\\
\text{with } \,p,q, a, b, \mu \in \mathbb{R}, \text{ such that } b \ne 0, p^2+q^2\ne0, \mu \in (0,1).
\end{cases}
\end{equation}
In this equation, $\boldsymbol{D}^{2-\mu} u, \boldsymbol{D}^{(2-\mu)*}u, Du$ are all understood as weak derivatives. The condition $p^2+q^2\neq 0$ implies that at least either $p\boldsymbol{D}^{2-\mu} u$ or  $q\boldsymbol{D}^{(2-\mu)*}u$  must be present in \cref{eq:FODE}, thereby avoiding the classical first order ODEs. Also, we point out that $b\neq 0$ plays an important role in determining the regularity of solution to problem~\cref{eq:FODE}. The main results are stated in \Cref{thm:1stex}, \Cref{Thm:Regularity}. 

\subsection{Several Important Tools}
Several results that are crucial in the subsequent analysis are first established.
 \begin{theorem}\label{thm:symmetry}
For $v, w \in C_0^\infty(\mathbb{R})$ and $\mu \geq 0$, it is true that
\begin{equation} \label{eq:MSK}
\begin{aligned}
& (\boldsymbol{D}^{\mu} v , \boldsymbol{D}^{\mu} w )  =
  (\boldsymbol{D}^{\mu*} v , \boldsymbol{D}^{\mu*} w ) = (2\pi)^{2\mu}\int_\mathbb{R}|\xi|^{2\mu} \widehat{v}(\xi) \overline{\widehat{w}(\xi)}  \,{\rm d}\xi,\\
 &(\boldsymbol{D}^{\mu} v, \boldsymbol{D}^{\mu*} w ) +(\boldsymbol{D}^{\mu} w, \boldsymbol{D}^{\mu*} v )
 =2\cos(\mu \pi)(\boldsymbol{D}^{\mu} v , \boldsymbol{D}^{\mu} w ).
 \end{aligned}
 \end{equation}
 \end{theorem}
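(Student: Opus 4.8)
The plan is to transfer both identities to the frequency side via Plancherel/Parseval and then read off the results from the explicit Fourier symbols of the R-L derivatives. First, since $v,w\in C_0^\infty(\mathbb{R})$, \Cref{prop:Boundedness} (with $p=2$) guarantees that $\boldsymbol{D}^\mu v,\boldsymbol{D}^\mu w,\boldsymbol{D}^{\mu*}v,\boldsymbol{D}^{\mu*}w$ all lie in $L^2(\mathbb{R})$, so every inner product in \cref{eq:MSK} is finite and \Cref{thm:PAR} and \Cref{thm:ParsevalFormula} apply. The case $\mu=0$ is immediate, since then $\boldsymbol{D}^0=\boldsymbol{D}^{0*}=\mathrm{Id}$ and $\cos(0)=1$, so I assume $\mu>0$. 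The only other inputs needed are \Cref{lem:FTFD}, giving $\widehat{\boldsymbol{D}^\mu v}=(2\pi i\xi)^\mu\widehat v$ and $\widehat{\boldsymbol{D}^{\mu*}v}=(-2\pi i\xi)^\mu\widehat v$ (and likewise for $w$), and the convention of \Cref{rem:ComplexPowerFunctions}, which lets me write these symbols in polar form
\begin{equation*}
(2\pi i\xi)^\mu=(2\pi)^\mu|\xi|^\mu e^{\,i\mu\pi\,\mathrm{sign}(\xi)/2},\qquad (-2\pi i\xi)^\mu=(2\pi)^\mu|\xi|^\mu e^{-i\mu\pi\,\mathrm{sign}(\xi)/2}.
\end{equation*}

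For the first identity I would apply \Cref{thm:ParsevalFormula}, using that $v,w$ are real so that the Parseval identity reads $(f,g)=\int_\mathbb{R}\widehat f\,\overline{\widehat g}\,{\rm d}\xi$. Substituting the two symbols above into $(\boldsymbol{D}^\mu v,\boldsymbol{D}^\mu w)$, the phase $e^{i\mu\pi\,\mathrm{sign}(\xi)/2}$ coming from $\widehat{\boldsymbol{D}^\mu v}$ is cancelled exactly by the conjugate phase coming from $\overline{\widehat{\boldsymbol{D}^\mu w}}$, leaving $(2\pi)^{2\mu}\int_\mathbb{R}|\xi|^{2\mu}\widehat v\,\overline{\widehat w}\,{\rm d}\xi$. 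The same cancellation occurs for $(\boldsymbol{D}^{\mu*}v,\boldsymbol{D}^{\mu*}w)$, yielding the identical value, which proves the first line of \cref{eq:MSK}.

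For the second identity I compute the two cross terms the same way, but now the phases reinforce rather than cancel: $(\boldsymbol{D}^\mu v,\boldsymbol{D}^{\mu*}w)=(2\pi)^{2\mu}\int_\mathbb{R}|\xi|^{2\mu}e^{\,i\mu\pi\,\mathrm{sign}(\xi)}\,\widehat v\,\overline{\widehat w}\,{\rm d}\xi$, and $(\boldsymbol{D}^\mu w,\boldsymbol{D}^{\mu*}v)$ is the same with $v,w$ interchanged. Adding them produces the factor $\widehat v\,\overline{\widehat w}+\widehat w\,\overline{\widehat v}=2\,\mathrm{Re}(\widehat v\,\overline{\widehat w})$ against the kernel $|\xi|^{2\mu}e^{i\mu\pi\,\mathrm{sign}(\xi)}=|\xi|^{2\mu}\bigl(\cos(\mu\pi)+i\sin(\mu\pi)\,\mathrm{sign}(\xi)\bigr)$.

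The crux, and the step I expect to be the main obstacle, is showing that the imaginary $\sin(\mu\pi)$ part of this kernel contributes nothing. This rests on a parity argument: since $v$ and $w$ are real, their transforms are Hermitian, $\widehat v(-\xi)=\overline{\widehat v(\xi)}$ and likewise for $w$, so $\mathrm{Re}(\widehat v\,\overline{\widehat w})$ is an even function of $\xi$, whereas $|\xi|^{2\mu}\,\mathrm{sign}(\xi)$ is odd; hence the odd integrand integrates to zero over $\mathbb{R}$. What survives is $2\cos(\mu\pi)(2\pi)^{2\mu}\int_\mathbb{R}|\xi|^{2\mu}\,\mathrm{Re}(\widehat v\,\overline{\widehat w})\,{\rm d}\xi$. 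Finally, because $(\boldsymbol{D}^\mu v,\boldsymbol{D}^\mu w)$ is an inner product of real $L^2$ functions it is real, so by the first identity it already equals $(2\pi)^{2\mu}\int_\mathbb{R}|\xi|^{2\mu}\,\mathrm{Re}(\widehat v\,\overline{\widehat w})\,{\rm d}\xi$; substituting this in yields exactly $2\cos(\mu\pi)(\boldsymbol{D}^\mu v,\boldsymbol{D}^\mu w)$, which is the second line of \cref{eq:MSK}.
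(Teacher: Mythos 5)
Your argument is correct and follows the same route as the paper: reduce everything to the Fourier side via \Cref{prop:Boundedness}, \Cref{thm:ParsevalFormula}, \Cref{lem:FTFD}, and the polar convention of \Cref{rem:ComplexPowerFunctions}, from which the first identity is immediate. The only place you diverge is the step you flag as the crux. You write both cross terms with the \emph{same} phase $e^{i\mu\pi\,\mathrm{sign}(\xi)}$ against $\widehat v\,\overline{\widehat w}$ and $\widehat w\,\overline{\widehat v}$ respectively, and then need Hermitian symmetry of the transforms of real functions plus an even/odd parity argument to kill the $i\sin(\mu\pi)\,\mathrm{sign}(\xi)$ contribution; that argument is valid as written. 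The paper instead exploits the freedom to represent the real number $(\boldsymbol{D}^{\mu}w,\boldsymbol{D}^{\mu*}v)$ as $(\overline{\mathcal{F}(\boldsymbol{D}^{\mu}w)},\mathcal{F}(\boldsymbol{D}^{\mu*}v))$ rather than its conjugate, so that the two cross terms carry conjugate phases against the \emph{same} factor $\widehat v\,\overline{\widehat w}$ and combine pointwise as $e^{i\mu\pi\,\mathrm{sign}(\xi)}+e^{-i\mu\pi\,\mathrm{sign}(\xi)}=2\cos(\mu\pi)$, with no parity or realness argument needed. Both routes are sound; the paper's bookkeeping makes the "obstacle" disappear, while yours requires the extra (correct) observation that $\mathrm{Re}(\widehat v\,\overline{\widehat w})$ is even and that $(\boldsymbol{D}^{\mu}v,\boldsymbol{D}^{\mu}w)$ is real so the $\mathrm{Re}$ can be dropped at the end.
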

\begin{proof}
The two equalities in \cref{eq:MSK} are true when $\mu=0$, so suppose $\mu>0$.
Since $v,w \in C_0^\infty(\mathbb{R})$, \Cref{prop:Boundedness} guarantees that $\boldsymbol{D}^\mu v , \boldsymbol{D}^\mu w, \boldsymbol{D}^{\mu*}v, \boldsymbol{D}^{\mu*}w \in L^p(\mathbb{R})$ with $p\geq 1$. Using \Cref{thm:ParsevalFormula} (Parseval Formula) and  in combination with \Cref{lem:FTFD}  give
\begin{equation*}
\begin{aligned}
(\boldsymbol{D}^{\mu} v, \boldsymbol{D}^{\mu} w ) &=
(\mathcal{F}(\boldsymbol{D}^\mu v), \overline{\mathcal{F}(\boldsymbol{D}^\mu w)} ) = (2\pi)^{2\mu}\int_\mathbb{R}|\xi|^{2\mu} \widehat{v}(\xi) \overline{\widehat{w}(\xi)}  \,{\rm d}\xi, \\
(\boldsymbol{D}^{\mu*} v, \boldsymbol{D}^{\mu*} w ) &=
(\mathcal{F}(\boldsymbol{D}^{\mu*} v), \overline{\mathcal{F}(\boldsymbol{D}^{\mu*} w)} ) = (2\pi)^{2\mu}\int_\mathbb{R}|\xi|^{2\mu} \widehat{v}(\xi) \overline{\widehat{w}(\xi)}  \,{\rm d}\xi,
\end{aligned}
\end{equation*}
confirming the first equality in \cref{eq:MSK}. In a similar fashion,
\begin{equation}\label{equ:ExcellentResults}
\begin{aligned}
(\boldsymbol{D}^{\mu} v , \boldsymbol{D}^{\mu*} w ) &=
(\mathcal{F}(\boldsymbol{D}^{\mu} v), \overline{\mathcal{F}(\boldsymbol{D}^{\mu*} w)} )
= (2\pi)^{2\mu}  \RN{1},\\
(\boldsymbol{D}^{\mu} w , \boldsymbol{D}^{\mu*} v ) &=
(\overline{\mathcal{F}(\boldsymbol{D}^{\mu} w)}, \mathcal{F}(\boldsymbol{D}^{\mu*} v) )
= (2\pi)^{2\mu}  \RN{2},
\end{aligned}
\end{equation}
where
\begin{equation*}
\RN{1} =  \int_\mathbb{R} (i\xi)^{\mu} \, \overline{(-i\xi)^\mu} \, \widehat{v}(\xi) \,\overline{\widehat{w}(\xi)} \,{\rm d}\xi ~~\text{and}~~ \RN{2} =   \int_\mathbb{R} \overline{(i\xi)^{\mu}} \, (-i\xi)^\mu \,\widehat{v}(\xi) \, \overline{\widehat{w}(\xi)} \,{\rm d}\xi.
\end{equation*}
Upon utilization of \Cref{rem:ComplexPowerFunctions},
\begin{equation*}
\begin{aligned}
\RN{1} &= 
\int_\mathbb{R} |\xi|^{2\mu} e^{i \mu \text{sign}(\xi) \pi/2} \, \overline{e^{-i \mu \text{sign}(\xi) \pi/2}} \, \widehat{v}(\xi) \,\overline{\widehat{w}(\xi)} \,{\rm d}\xi
= 
\int_\mathbb{R} |\xi|^{2\mu} e^{i \mu \text{sign}(\xi) \pi}  \, \widehat{v}(\xi) \,\overline{\widehat{w}(\xi)} \,{\rm d}\xi,
\end{aligned}
\end{equation*}
\begin{equation*}
\begin{aligned}
\RN{2} &= \hspace*{-0.1cm}
\int_\mathbb{R} |\xi|^{2\mu} \overline{e^{i \mu \text{sign}(\xi) \pi/2}} \, e^{-i \mu \text{sign}(\xi) \pi/2} \, \widehat{v}(\xi) \,\overline{\widehat{w}(\xi)} \,{\rm d}\xi
\hspace*{-0.05cm} = \hspace*{-0.1cm}
\int_\mathbb{R} |\xi|^{2\mu} e^{-i \mu \text{sign}(\xi) \pi}  \, \widehat{v}(\xi) \,\overline{\widehat{w}(\xi)} \,{\rm d}\xi.
\end{aligned}
\end{equation*}
Summation of $\RN{1}$ and $\RN{2}$ and decomposition of $\mathbb{R}$ into $(-\infty,0)$ and $(0,\infty)$ yield
\begin{equation*}
\begin{aligned}
\RN{1} + \RN{2} &=(e^{-\pi i\mu}+e^{\pi i\mu})\int_{-\infty}^0 |\xi|^{2\mu}\, \widehat{v}(\xi) \,\overline{\widehat{w}(\xi)} \,{\rm d}\xi+(e^{-\pi i\mu}+e^{\pi i\mu}) \int_0^\infty |\xi|^{2\mu}\, \widehat{v}(\xi) \,\overline{\widehat{w}(\xi)} \,{\rm d}\xi\\
&=2\cos(\mu \pi)\int_{\mathbb{R}}|\xi|^{2\mu} \widehat{v}(\xi) \overline{\widehat{w}(\xi)}  \,{\rm d}\xi,
\end{aligned}
\end{equation*}
from which the second equality in \cref{eq:MSK} follows.
\end{proof}%
 \begin{lemma}\label{lem:dense}
The set $M=\{w: w=Lv, ~\forall v\in C_0^\infty(\mathbb{R})$\} is dense in $L^{2}(\mathbb{R})$. Furthermore, the set
$\widetilde{M}=\{w: w=\widetilde{L} v, ~\forall v\in C_0^\infty(\mathbb{R})$\} is also dense in $L^{2}(\mathbb{R})$, where
$\widetilde{L} v = p\boldsymbol{D}^{(2-\mu)*}v+q\boldsymbol{D}^{2-\mu}v-aDv+bv$.
\end{lemma}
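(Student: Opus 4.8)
The plan is to prove density by showing that the orthogonal complement of $M$ in $L^2(\mathbb{R})$ is trivial; the claim for $\widetilde{M}$ will then follow by an essentially identical computation. First I would take an arbitrary $g\in L^2(\mathbb{R})$ satisfying $(Lv,g)=0$ for every $v\in C_0^\infty(\mathbb{R})$ and aim to conclude $g=0$. Writing $Lv=p\boldsymbol{D}^{2-\mu}v+q\boldsymbol{D}^{(2-\mu)*}v+aDv+bv$ and applying \Cref{thm:ParsevalFormula} together with \Cref{lem:FTFD} (and the elementary identity $\mathcal{F}(Dv)=2\pi i\xi\,\mathcal{F}(v)$), the orthogonality relation transforms into
\begin{equation*}
\int_{\mathbb{R}} P(\xi)\,\widehat{v}(\xi)\,\overline{\widehat{g}(\xi)}\,{\rm d}\xi = 0, \qquad \forall\, v\in C_0^\infty(\mathbb{R}),
\end{equation*}
where $P(\xi)=p(2\pi i\xi)^{2-\mu}+q(-2\pi i\xi)^{2-\mu}+2\pi i a\xi+b$ is the Fourier symbol of $L$, interpreted through \Cref{rem:ComplexPowerFunctions}.

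Next I would argue that this relation forces $P\,\overline{\widehat{g}}=0$ almost everywhere. Since $C_0^\infty(\mathbb{R})$ is dense in $L^2(\mathbb{R})$ and the Plancherel transform (\Cref{thm:PAR}) is unitary, the family $\{\widehat{v}:v\in C_0^\infty(\mathbb{R})\}$ is dense in $L^2(\mathbb{R})$; being the Fourier image of $C_0^\infty(\mathbb{R})$, it is in fact dense in the Schwartz class. Because $P$ has at most polynomial growth and $\widehat{g}\in L^2(\mathbb{R})$, the product $P\,\overline{\widehat{g}}$ defines a tempered distribution, and the vanishing of its pairing against every $\widehat{v}$ therefore yields $P(\xi)\,\overline{\widehat{g}(\xi)}=0$ for a.e.\ $\xi$. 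I expect to handle the polynomial growth of $P$ precisely by pairing against the rapidly decaying Schwartz functions $\widehat{v}$, so that no integrability difficulty arises in this passage.

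The crux --- and the main obstacle --- is to show that the zero set $\{\xi:P(\xi)=0\}$ has Lebesgue measure zero, so that $\widehat{g}=0$ a.e.\ and hence $g=0$. Here the hypothesis $b\neq 0$ is essential: since $2-\mu\in(1,2)$, the power terms vanish as $\xi\to 0$, so $P$ is continuous at the origin with $P(0)=b\neq 0$. On each open half-line the map $\xi\mapsto|\xi|^{2-\mu}$ is real-analytic, hence so is $P$; as $P$ is not identically zero near $0$ on either side, its restriction to $(0,\infty)$ and to $(-\infty,0)$ is a real-analytic function that is not identically zero, and such a function has isolated zeros. Consequently $\{P=0\}$ is countable and of measure zero, which completes the argument for $M$.

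Finally, for $\widetilde{M}$ I would observe that the Fourier symbol of $\widetilde{L}$ is exactly $\overline{P}$: using $\overline{(2\pi i\xi)^{2-\mu}}=(-2\pi i\xi)^{2-\mu}$ from \Cref{rem:ComplexPowerFunctions} together with the reality of $p,q,a,b$, one checks that $\widetilde{L}v=p\boldsymbol{D}^{(2-\mu)*}v+q\boldsymbol{D}^{2-\mu}v-aDv+bv$ has symbol $\overline{P}$. Since $\overline{P}$ shares the zero set of $P$, the identical reasoning --- trivial orthogonal complement via $\overline{P}\,\overline{\widehat{g}}=0$ a.e.\ and the measure-zero zero set --- establishes that $\widetilde{M}$ is dense in $L^2(\mathbb{R})$ as well.
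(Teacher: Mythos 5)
Your proposal is correct, but it follows a genuinely different route from the paper's. The paper also reduces the problem to showing that the orthogonal complement of $M$ is trivial (via \Cref{Thm:dense}, after first noting, via \Cref{prop:Boundedness}, that $M$ is a subspace of $L^2(\mathbb{R})$ --- a preliminary step you should make explicit, since you tacitly use $Lv\in L^2(\mathbb{R})$ when forming $(Lv,g)$). From that point the two arguments diverge. The paper never pairs $\widehat{g}$ directly against $P\widehat{v}$; instead it exploits the translation invariance of $M$ (\Cref{pro:TranslationDerivative}) to show $(g,\tau_y w)=0$ for all $y$, recognizes this as a vanishing convolution, applies \Cref{thm:ConvolutionFourierTransform} to obtain $\widehat{g}\,\overline{\mathcal{F}(w)}=0$, and then uses a dilation $\Pi_\epsilon\varphi$ to produce, for every $\epsilon>0$, an interval $(\epsilon a,\epsilon b)$ on which $\mathcal{F}(w)\ne 0$ a.e., forcing $\widehat{g}=0$ everywhere. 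You instead pass directly to Fourier space via Parseval and argue that $P\,\overline{\widehat{g}}$, viewed as a tempered distribution, is annihilated by the dense subset $\mathcal{F}(C_0^\infty(\mathbb{R}))$ of the Schwartz class; this is a standard and valid shortcut, at the price of invoking distribution-theoretic machinery (density of $\mathcal{F}(C_0^\infty)$ in $\mathcal{S}$ and continuity of the pairing) that the paper deliberately avoids in favor of the more elementary convolution and dilation arguments already collected in its appendix. For the zero set of the symbol, the paper solves the explicit real--imaginary system, using that $\cos(\vartheta)$ and $\sin(\vartheta)$ are nonzero for $\mu\in(0,1)$, to conclude there is at most one zero; your real-analyticity argument on each half-line, anchored by $P(0)=b\ne 0$, reaches the same measure-zero conclusion and is arguably more robust (it would survive perturbations of the operator), though it is less explicit. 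Your observation that the symbol of $\widetilde{L}$ is $\overline{P}$, so the same zero set works for $\widetilde{M}$, is a nice economy; the paper simply says the argument repeats.
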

\begin{proof}
By \Cref{prop:Boundedness}, $M\subset L^2(\mathbb{R})$. Since $L^2(\mathbb{R})$ is a Hilbert space, the density of $M$ is established by invoking \Cref{Thm:dense}.  Furthermore, because $C_0^\infty(\mathbb{R})$ is closed under addition and scalar multiplication, so is $M$, and thus $M$ is a subspace of $L^2(\mathbb{R})$. Therefore all conditions are met for the utilization of \Cref{Thm:dense}.
 Using \Cref{lem:FTFD} (Fourier Transform) for $w=Lv$ gives 
\[
[\mathcal{F}(w)](\xi)= H(\xi) [\mathcal{F}(v)](\xi), ~~H(\xi) = \left(p(2\pi i \xi)^{2-\mu}+q(-2\pi i\xi)^{2-\mu}+a(2\pi i\xi)+b \right).
\]
Setting $\displaystyle \vartheta = \frac{(2-\mu) \pi \, \text{sign}(\xi) }{2}$ and following \Cref{rem:ComplexPowerFunctions}, $H(\xi)$ is expressed as
\begin{equation}
\begin{aligned}
H(\xi) &= (2\pi |\xi|)^{2-\mu} \left( p e^{i \vartheta} + q e^{-i \vartheta} \right)
+ a(2\pi i\xi)+b\\
&= \left( (2\pi |\xi|)^{2-\mu} (p+q) \cos(\vartheta) + b \right) +
i \left( (2\pi |\xi|)^{2-\mu} (p-q) \sin(\vartheta) + 2\pi a \xi \right).
\end{aligned}
\end{equation}
If $H(\xi) = 0$, then $\xi$ must satisfy
\begin{equation} 
\begin{cases}
(2\pi |\xi|)^{2-\mu} (p+q) \cos(\vartheta) + b=0,\\
(2\pi |\xi|)^{2-\mu} (p-q) \sin(\vartheta) + 2\pi a \xi =0.
\end{cases}
\end{equation}
Notice that $\cos(\vartheta)$ and $\sin(\vartheta)$ can never be zero when $\mu \in (0,1)$.
In such a case, there is at most one $\xi \in \mathbb{R}$ such that $H(\xi) = 0$,  thereby confirming that $H(\xi) \neq 0$ a.e in $\mathbb{R}$.

At this stage, we repeat some of the arguments in the proof of \Cref{thm:EquivalenceOfSpaces}. Specifically, choose $0 \ne \varphi \in C_0^\infty(\mathbb{R})$, so that by \Cref{thm:PAR} (Plancherel), $\mathcal{F}(\varphi)\neq 0$. On the account of continuity of $\mathcal{F}(\varphi)$, there exists $(a,b)\subset \mathbb{R}$ such that $\mathcal{F}(\varphi)\neq 0$ in $(a,b)$. Choose
$\epsilon>0$, and let $v \in C_0^\infty(\mathbb{R})$ such that $v=\Pi_\epsilon \varphi$. 
It is true that  $[\mathcal{F}(v)](\xi)= \epsilon^{-1} [\mathcal{F}(\varphi)](\epsilon^{-1}\xi)$ and  thus $\mathcal{F}(v)\neq 0$ in $(\epsilon a, \epsilon b)$. This and in combination with the fact that $H(\xi) \ne 0$ a.e. in $\mathbb{R}$ implies $\mathcal{F}(w) \neq 0$ a.e. in $(\epsilon a, \epsilon b)$, or equivalently,  $\overline{\mathcal{F}(w)} \neq 0$ a.e. in $(\epsilon a, \epsilon b)$.
 
Let $g\in L^2(\mathbb{R})$ such that $(g,w) = 0 ~\text{ for any } w\in M$. By \Cref{Thm:dense}, the density of $M$ is confirmed if this equation implies that $g = 0$.
Given $w \in M$ and any fixed $y\in \mathbb{R}$, and using the translation operator, set  

\[
G(y) = (g, \tau_y w) = \int_{\mathbb{R}} g(x) w(x-y) \, {\rm d} x =  \int_{\mathbb{R}} g(y-z) w(-z) \, {\rm d} z, 
\] 
where a change of variable was used to get the last term in the above equality.
Notice that \Cref{pro:TranslationDerivative} implies that
$ \tau_y w= \tau_y L v = L(\tau_y v)$, where it is true that $\tau _y v\in C_0^\infty(\mathbb{R})$ for $v\in C_0^\infty(\mathbb{R})$. This means $\tau_y w\in M$ and thus $G(y)=0$ for every $y\in \mathbb{R}$. This fact along with an application of \Cref{thm:ConvolutionFourierTransform} yields $0 = \widehat{g}\overline{\widehat{w}} = \widehat{g} \overline{\mathcal{F}(w)}$. However, as noted earlier, $\overline{\mathcal{F}(w)} \neq 0$ a.e. in $(\epsilon a, \epsilon b)$, so it must be that $\widehat{g}=0$ a.e.  in $(\epsilon a,\epsilon b)$.  Because $\epsilon>0$ is arbitrary, $\widehat{g}=0$ in any open interval, and thus $\widehat{g}=0$ in $\mathbb{R}$. Another use of  \Cref{thm:PAR} (Plancherel) concludes that $g=0$, implying the density of $M$ in $L^2(\mathbb{R})$.

Density of $\widetilde{M}$ is shown by repeating the foregoing arguments using $\widetilde{L}$.
 \end{proof}

\Cref{lem:dense} is the basis for computing $\| w \|_{L^2(\mathbb{R})}$ for $w\in M$ whose representation can be either
$w=L v$ or $w = L(\Pi_{1/\delta} v)$,  $v \in C_0^\infty(\mathbb{R})$. The results are stated in
\Cref{lem:norm} and \Cref{lem:normtwo}.
 \begin{lemma}\label{lem:norm}
 For $w=Lv$, with $v\in C_0^\infty(\mathbb{R})$, the following norm equality holds,
\[
 \|w\|^2_{L^2(\mathbb{R})} = \sum_{j=1}^5 C_j \|\boldsymbol{D}^{\sigma_j}v\|^2_{L^2(\mathbb{R})},
\]
where
\begin{equation}
\begin{aligned}
C_1&=p^2+q^2+2pq\cos(\sigma_1\pi), & \sigma_1 &= 2-\mu,\\
C_2&=2a(q-p)\cos(\sigma_2\pi), & \sigma_2 &= \frac{1}{2}(3-\mu),\\
C_3&=a^2, & \sigma_3 &= 1,\\
C_4&=2b(p+q)\cos(\sigma_4\pi), & \sigma_4 &= \frac{1}{2}(2-\mu),\\
C_5&=b^2, & \sigma_5 &= 0.
\end{aligned}
\end{equation}
 \end{lemma}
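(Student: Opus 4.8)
The plan is to pass to the frequency side via the Plancherel theorem and reduce the whole computation to expanding a single multiplier $|H(\xi)|^2$. Since $p,q,a,b$ are real and $v\in C_0^\infty(\mathbb{R})$, the function $w=Lv$ is real-valued and, by \Cref{prop:Boundedness}, lies in $L^2(\mathbb{R})$; hence \Cref{thm:PAR} gives $\|w\|_{L^2(\mathbb{R})}^2 = \|\mathcal{F}(Lv)\|_{L^2(\mathbb{R})}^2$. Applying \Cref{lem:FTFD} term by term yields $[\mathcal{F}(Lv)](\xi) = H(\xi)\,\widehat{v}(\xi)$ with exactly the $H$ already computed in the proof of \Cref{lem:dense}, so that $\|w\|^2_{L^2(\mathbb{R})} = \int_{\mathbb{R}} |H(\xi)|^2\, |\widehat{v}(\xi)|^2\,{\rm d}\xi$. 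The task is thus to expand $|H(\xi)|^2$ and recognize each summand as $C_j\,(2\pi)^{2\sigma_j}|\xi|^{2\sigma_j}\,|\widehat v(\xi)|^2$: indeed, setting $w=v$ in the first identity of \Cref{thm:symmetry} gives $\|\boldsymbol{D}^{\sigma_j}v\|_{L^2(\mathbb{R})}^2 = (2\pi)^{2\sigma_j}\int_{\mathbb{R}}|\xi|^{2\sigma_j}\,|\widehat v(\xi)|^2\,{\rm d}\xi$, so once the integrand is in this form the claimed equality follows by matching coefficients.

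Next I would split $H$ into real and imaginary parts as in \Cref{lem:dense}: writing $\vartheta = \tfrac{(2-\mu)\pi}{2}\,\text{sign}(\xi)$ and $R=(2\pi|\xi|)^{2-\mu}$, one has $\operatorname{Re}H = R(p+q)\cos\vartheta + b$ and $\operatorname{Im}H = R(p-q)\sin\vartheta + 2\pi a\xi$, and since $|H|^2 = (\operatorname{Re}H)^2 + (\operatorname{Im}H)^2$ there is no real--imaginary cross term. Expanding the two squares produces five groups. The two diagonal terms $R^2[(p+q)^2\cos^2\vartheta + (p-q)^2\sin^2\vartheta]$ combine, via $\cos^2+\sin^2=1$ and the double-angle formula, into $R^2[(p^2+q^2)+2pq\cos(2\vartheta)]$; because cosine is even, $\cos(2\vartheta)=\cos((2-\mu)\pi)=\cos(\sigma_1\pi)$, and $R^2=(2\pi)^{2\sigma_1}|\xi|^{2\sigma_1}$, giving the $(C_1,\sigma_1)$ contribution. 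The pure constants $b^2$ and $(2\pi a\xi)^2$ give directly the $C_5=b^2$, $\sigma_5=0$ term and the $C_3=a^2$, $\sigma_3=1$ term (using $(2\pi\xi)^2=(2\pi)^2|\xi|^2$). The mixed term $2bR(p+q)\cos\vartheta$ yields $(C_4,\sigma_4)$ after noting $\cos\vartheta=\cos(\tfrac{(2-\mu)\pi}{2})=\cos(\sigma_4\pi)$ and $R=(2\pi)^{2\sigma_4}|\xi|^{2\sigma_4}$, since $2\sigma_4=2-\mu$.

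I expect the advection cross term $4\pi a\xi R(p-q)\sin\vartheta$ to be the only delicate step. Here the factor $\xi$ carries a sign while $\sin\vartheta = \text{sign}(\xi)\sin(\tfrac{(2-\mu)\pi}{2})$ is odd, so the product $\xi\sin\vartheta = |\xi|\sin(\tfrac{(2-\mu)\pi}{2})$ becomes even; consequently this group contributes $2a(p-q)(2\pi)^{3-\mu}|\xi|^{3-\mu}\sin(\tfrac{(2-\mu)\pi}{2})$ to $|H(\xi)|^2$, and since $2\sigma_2=3-\mu$ the power $|\xi|^{2\sigma_2}$ is already correct. It then remains to match the trigonometric coefficient against $C_2=2a(q-p)\cos(\sigma_2\pi)$, which follows from the phase-shift identity $\cos(\sigma_2\pi)=\cos(\tfrac{(3-\mu)\pi}{2})=\cos(\tfrac{(2-\mu)\pi}{2}+\tfrac{\pi}{2})=-\sin(\tfrac{(2-\mu)\pi}{2})$, whence $(p-q)\sin(\tfrac{(2-\mu)\pi}{2})=(q-p)\cos(\sigma_2\pi)$. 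Summing the five matched contributions, integrating against $|\widehat v|^2$, and invoking the identity for $\|\boldsymbol{D}^{\sigma_j}v\|^2$ above gives $\|w\|_{L^2(\mathbb{R})}^2 = \sum_{j=1}^5 C_j\,\|\boldsymbol{D}^{\sigma_j}v\|_{L^2(\mathbb{R})}^2$, completing the proof.
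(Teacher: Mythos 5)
Your proof is correct, but it takes a genuinely different route from the paper's. The paper stays entirely on the physical side: it expands $(Lv,Lv)$ into a diffusion block, an advection--reaction block, and a cross block, kills $(aDv,bv)$ by integration by parts, and then handles the cross terms $(\boldsymbol{D}^{2-\mu}v,v)$ and $(\boldsymbol{D}^{2-\mu}v,Dv)$ by repeatedly splitting the order via the semigroup property (\Cref{cor:SGInfinity}), moving half of it across the inner product with the adjoint relation (\Cref{cor:ADJInfinity}), and finally invoking the second identity of \Cref{thm:symmetry} to produce the $\cos(\sigma_2\pi)$ and $\cos(\sigma_4\pi)$ factors at the half-orders $\sigma_2=(3-\mu)/2$ and $\sigma_4=(2-\mu)/2$. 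You instead pass to the frequency side once and for all, write $\|Lv\|^2_{L^2}=\int|H(\xi)|^2|\widehat v|^2\,{\rm d}\xi$ with the multiplier $H$ already computed in \Cref{lem:dense}, and expand $|H|^2=(\operatorname{Re}H)^2+(\operatorname{Im}H)^2$; the only delicate point, the odd factor $\xi\sin\vartheta$ becoming the even $|\xi|\sin(\tfrac{(2-\mu)\pi}{2})$ and the phase identity $\cos(\sigma_2\pi)=-\sin(\tfrac{(2-\mu)\pi}{2})$, is handled correctly, and you only need the first identity of \Cref{thm:symmetry} to translate back to $\|\boldsymbol{D}^{\sigma_j}v\|^2_{L^2}$. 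Your argument is shorter and more mechanical, reducing the lemma to a single algebraic expansion of a real-valued multiplier; the paper's argument is longer but exercises the real-space operator calculus (semigroup, adjoint, and inversion properties of the R--L operators) that the rest of the paper also relies on. Both are legitimate; just make sure to note explicitly that $Lv\in L^1(\mathbb{R})\cap L^2(\mathbb{R})$ (which follows from \Cref{prop:Boundedness}) so that the Plancherel transform of $Lv$ coincides with $\mathcal{F}(Lv)$ and the pointwise multiplier formula of \Cref{lem:FTFD} may be used inside the $L^2$ norm.
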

 \begin{proof}
 By definition,
\begin{equation}
\|w\|_2^2=(Lv ,Lv)=\RN{1}+\RN{2}+\RN{3},
\end{equation} 
 where
 \begin{equation}
\begin{aligned}
 \RN{1}&=(p\boldsymbol{D}^{2-\mu} v+q\boldsymbol{D}^{(2-\mu)*}v\, ,\,p\boldsymbol{D}^{2-\mu} v+q\boldsymbol{D}^{(2-\mu)*}v),\\
 \RN{2}&=(aDv+b v,aDv+b v),\\
 \RN{3}&=2(p\boldsymbol{D}^{2-\mu} v+q\boldsymbol{D}^{(2-\mu)*}v\, ,\, aDv+b v).
\end{aligned}
 \end{equation}
In the following, we compute $\RN{1},\RN{2},\RN{3}$ separately. The idea is that we would like to shift the exponents in the fractional derivatives by using basic properties of R-L operators, so that \Cref{thm:symmetry} can be utilized.

Application of \Cref{thm:symmetry} shows that
 \begin{equation}
 \begin{aligned}
 \RN{1}&=(p\boldsymbol{D}^{\sigma_1} v,p\boldsymbol{D}^{\sigma_1} v)+(q\boldsymbol{D}^{\sigma_1*}v,q\boldsymbol{D}^{\sigma_1*}v)+2(p\boldsymbol{D}^{\sigma_1} v,q\boldsymbol{D}^{\sigma_1*}v)\\
 &=(p^2+q^2)(\boldsymbol{D}^{\sigma_1} v,\boldsymbol{D}^{\sigma_1} v)+2pq\cos((2-\mu)\pi)(\boldsymbol{D}^{\sigma_1} v,\boldsymbol{D}^{\sigma_1} v)\\
 &=C_1 \|\boldsymbol{D}^{\sigma_1} v\|^2_{L^2(\mathbb{R})}.
 \end{aligned}
 \end{equation}

An integration by parts shows that $(aDv,bv) = -(av, Dv)$ and thus $(aDv,v) = 0$. This means
 \begin{equation}
 \RN{2}=(aDv,aDv)+(bv,bv)+2(aDv,bv) = C_3 \|\boldsymbol{D}^{\sigma_3}v\|^2_{L^2(\mathbb{R})}+ C_5\|\boldsymbol{D}^{\sigma_5} v\|^2_{L^2(\mathbb{R})}.
 \end{equation}

Moreover, we make a decomposition $\RN{3} = 2b \RN{3}_1 + 2a\RN{3}_2$, with
\[
\RN{3}_1 = p(\boldsymbol{D}^{2-\mu} v, v)+q(\boldsymbol{D}^{(2-\mu)*}v, v)~\text{and}~ \RN{3}_2 =p(\boldsymbol{D}^{2-\mu} v,Dv)+q(\boldsymbol{D}^{(2-\mu)*}v,Dv).
\]
The following calculation for $\RN{3}_1$ is performed:
\[
\begin{aligned}
\RN{3}_1 &= p(\boldsymbol{D}^{-\mu}v^{(2)}, v)+q(\boldsymbol{D}^{-\mu*}v^{(2)},v)  \hspace*{1.84cm} (\text{by \Cref{rem:ConvolutionofIntegral} and \Cref{thm:differentiability}})\\
&= p(\boldsymbol{D}^{-\mu/2}\boldsymbol{D}^{-\mu/2}v^{(2)}, v)+q(\boldsymbol{D}^{-\mu/2*}\boldsymbol{D}^{-\mu/2*}v^{(2)},v) \hspace*{1.648cm}  (\text{by \Cref{cor:SGInfinity}})\\
&= p(\boldsymbol{D}^{-\mu/2}v^{(2)}, \boldsymbol{D}^{-\mu/2*}v)+q(\boldsymbol{D}^{-\mu/2*}v^{(2)},\boldsymbol{D}^{-\mu/2}v) \hspace*{1.648cm} (\text{by \Cref{cor:ADJInfinity}})\\
&= p(D^2\boldsymbol{D}^{-\mu/2}v, \boldsymbol{D}^{-\mu/2*}v)+q(D^2\boldsymbol{D}^{-\mu/2*}v,\boldsymbol{D}^{-\mu/2}v) \hspace*{0.385cm}  (\text{by \Cref{rem:ConvolutionofIntegral} and \ref{thm:differentiability}})\\
&= p(\boldsymbol{D}^{\sigma_4}v,\boldsymbol{D}^{\sigma_4*}v)+q(\boldsymbol{D}^{\sigma_4*}v,\boldsymbol{D}^{\sigma_4}v) \hspace*{1.569cm} (\text{int. by parts and \Cref{def:RLD}})\\
&= (p+q) \cos(\sigma_4 \pi) \|\boldsymbol{D}^{\sigma_4}v\|^2_{L^2(\mathbb{R})}.  \hspace*{4.8cm}(\text{by \Cref{thm:symmetry}})
\end{aligned}
\]
Similar calculation is performed for $\RN{3}_2$,
 after first integrating it by parts and using \Cref{def:RLD}:
\[
\begin{aligned}
\RN{3}_2 &=-p(\boldsymbol{D}^{3-\mu}v,v)+q(\boldsymbol{D}^{(3-\mu) *}v, v)  \hspace*{1.212cm} (\text{int. by parts and \Cref{def:RLD}}) \\
&=-p(\boldsymbol{D}^{-\mu} v^{(3)}, v)+q(-\boldsymbol{D}^{-\mu *}v^{(3)},v)  \hspace*{1.915cm} (\text{by \Cref{rem:ConvolutionofIntegral} and \ref{thm:differentiability}}) \\
  &=-p(v^{(3)}, \boldsymbol{D}^{-\mu*}v)+q(-v^{(3)},\boldsymbol{D}^{-\mu}v)  \hspace*{3.015cm} (\text{by \Cref{cor:ADJInfinity}})\\
   &=-p(\boldsymbol{D}^{-1}v^{(4)},\boldsymbol{D}^{-\mu*}v)+q(\boldsymbol{D}^{-1*}v^{(4)}, \boldsymbol{D}^{-\mu}v) \hspace*{1.71cm} (\text{by \Cref{cor:IPC}}) \\
&=-p(\boldsymbol{D}^{-(1-\mu)/2}\boldsymbol{D}^{-(1+\mu)/2}v^{(4)},\boldsymbol{D}^{-\mu*}v)\\
&\hspace*{0.4cm}+q(\boldsymbol{D}^{-(1-\mu)/2*}\boldsymbol{D}^
   {-(1+\mu)/2*}v^{(4)}, \boldsymbol{D}^{-\mu}v) \hspace*{2.367cm} (\text{by \Cref{cor:SGInfinity}})\\
&=-p(\boldsymbol{D}^{-(1+\mu)/2}v^{(4)},\boldsymbol{D}^{-(1+\mu)/2*}v\\
      &\hspace*{0.4cm}+q(\boldsymbol{D}^{-(1+\mu)/2*}v^{(4)}, \boldsymbol{D}^{-(1+\mu)/2}v) \hspace*{3.26cm} (\text{by \Cref{cor:ADJInfinity}})\\
 &=-p(D^4\boldsymbol{D}^{-(1+\mu)/2}v,\boldsymbol{D}^{-(1+\mu)/2*}v)\\
 &\hspace*{0.4cm}+q(D^4\boldsymbol{D}^{-(1+\mu)/2*}v, \boldsymbol{D}^{-(1+\mu)/2}v) \hspace*{2.08cm}   (\text{by \Cref{rem:ConvolutionofIntegral} and \ref{thm:differentiability}})\\
  &=-p(\boldsymbol{D}^{\sigma_2}v,\boldsymbol{D}^{\sigma_2*}v)+q(\boldsymbol{D}^{\sigma_2*}v,\boldsymbol{D}^{\sigma_2}v) \hspace*{0.43cm}  (\text{int. by parts and \Cref{def:RLD}})\\
   &=(q-p) \cos(\sigma_2 \pi) \|\boldsymbol{D}^{\sigma_2}v\|^2_{L^2(\mathbb{R})}. \hspace*{3.95cm}  (\text{by \Cref{thm:symmetry}})
\end{aligned}
\]
This completes the proof.
 \end{proof}
 
 \begin{remark} \label{rem:Cs}
 It is worth noting that from the construction $C_1>0$ (because $|\cos(\sigma_1\pi)|<1, p^2 + q^2 \ne 0$), $C_3 \ge 0$, $C_5 > 0$ (because $b \ne 0$), $\cos(\sigma_2\pi)<0$, and $\cos(\sigma_4\pi)<0$. However  $C_2, C_4$ may be negative, and $C_2\ge 0$ only when $a(q-p)\le0$, $C_4\ge 0$ only when $a(p+q)\le0$. Discussion on different cases of $C_2, C_4$ is relegated to \Cref{lem:normcase1}, \Cref{lem:normcase2} and \Cref{lem:normcase3}.
 \end{remark}
 
 \begin{lemma}\label{lem:normtwo}
Let $\delta>0$, $\varphi \in C_0^\infty(\mathbb{R})$ and $w = L(\Pi_{1/\delta} \varphi)$. Then
  \begin{equation}\label{equ:W_nFourier}
 \begin{aligned}
  \|w\|^2_{L^2(\mathbb{R})} &= \sum_{j=1}^5  \frac{C_j}{\delta^{2\sigma_j-1}} 
\int_\mathbb{R}|2\pi \xi|^{2\sigma_j}|\widehat{\varphi}|^2\, {\rm d}\xi.
 \end{aligned}
 \end{equation}
 \end{lemma}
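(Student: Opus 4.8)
The plan is to reduce everything to \Cref{lem:norm} and then track how the two dilation-scaling mechanisms interact. First I would observe that since $\varphi \in C_0^\infty(\mathbb{R})$, the dilated function $v := \Pi_{1/\delta}\varphi$ also lies in $C_0^\infty(\mathbb{R})$ (dilation preserves both smoothness and compactness of support). Therefore \Cref{lem:norm} applies verbatim to $w = Lv = L(\Pi_{1/\delta}\varphi)$, yielding
\[
\|w\|^2_{L^2(\mathbb{R})} = \sum_{j=1}^5 C_j \,\|\boldsymbol{D}^{\sigma_j}(\Pi_{1/\delta}\varphi)\|^2_{L^2(\mathbb{R})}.
\]
The task thus collapses to evaluating each seminorm $\|\boldsymbol{D}^{\sigma_j}(\Pi_{1/\delta}\varphi)\|^2_{L^2(\mathbb{R})}$ and extracting the correct power of $\delta$.

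Next I would invoke the dilation identity from \Cref{pro:TranslationDerivative}, namely $\Pi_\kappa(\boldsymbol{D}^{\mu}\varphi) = \kappa^{-\mu}\boldsymbol{D}^{\mu}(\Pi_\kappa \varphi)$, specialized to $\kappa = 1/\delta$ and $\mu = \sigma_j$. This rearranges to $\boldsymbol{D}^{\sigma_j}(\Pi_{1/\delta}\varphi) = \delta^{-\sigma_j}\,\Pi_{1/\delta}(\boldsymbol{D}^{\sigma_j}\varphi)$. Taking $L^2$ norms and using the elementary change of variables $y = x/\delta$ (which gives $\|\Pi_{1/\delta} g\|^2_{L^2(\mathbb{R})} = \delta\,\|g\|^2_{L^2(\mathbb{R})}$), I obtain
\[
\|\boldsymbol{D}^{\sigma_j}(\Pi_{1/\delta}\varphi)\|^2_{L^2(\mathbb{R})}
= \delta^{-2\sigma_j}\cdot \delta\cdot \|\boldsymbol{D}^{\sigma_j}\varphi\|^2_{L^2(\mathbb{R})}
= \delta^{\,1-2\sigma_j}\,\|\boldsymbol{D}^{\sigma_j}\varphi\|^2_{L^2(\mathbb{R})}.
\]
Here the factor $\delta^{-2\sigma_j}$ comes from the derivative scaling and the extra factor $\delta$ from the Jacobian of the dilation; combining them produces exactly the $\delta^{-(2\sigma_j-1)}$ prefactor appearing in the statement. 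The edge cases $\sigma_3=1$ (integer derivative, handled by the chain rule as a special case of \Cref{pro:TranslationDerivative}) and $\sigma_5=0$ (where $\boldsymbol{D}^{0}\varphi=\varphi$ and the scaling is pure Jacobian) require no separate treatment.

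Finally I would convert the remaining seminorm into Fourier form by applying \Cref{thm:symmetry} with $v=w=\varphi$ and $\mu=\sigma_j\ge 0$, which gives $\|\boldsymbol{D}^{\sigma_j}\varphi\|^2_{L^2(\mathbb{R})} = (2\pi)^{2\sigma_j}\int_\mathbb{R}|\xi|^{2\sigma_j}|\widehat{\varphi}|^2\,{\rm d}\xi = \int_\mathbb{R}|2\pi\xi|^{2\sigma_j}|\widehat{\varphi}|^2\,{\rm d}\xi$. Substituting back into the summation over $j$ yields precisely \cref{equ:W_nFourier}. There is no genuine obstacle in this argument; it is a clean chaining of \Cref{lem:norm}, the dilation rule, and \Cref{thm:symmetry}. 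The only point demanding care is the bookkeeping of the exponents of $\delta$, specifically keeping the $\delta^{-2\sigma_j}$ from differentiation and the $+\delta$ from the change of variables distinct so that they correctly combine into $\delta^{1-2\sigma_j}=1/\delta^{2\sigma_j-1}$.
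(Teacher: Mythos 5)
Your argument is correct and follows essentially the same route as the paper: apply \Cref{lem:norm} to $\Pi_{1/\delta}\varphi \in C_0^\infty(\mathbb{R})$, use the dilation identity of \Cref{pro:TranslationDerivative} together with the Jacobian factor $\delta$ from the change of variables to produce the $\delta^{1-2\sigma_j}$ prefactor, and then pass to the Fourier representation of the seminorms. The only cosmetic difference is that you cite \Cref{thm:symmetry} for the last step while the paper cites \Cref{lem:FTFD} and \Cref{thm:PAR} directly, but these amount to the same computation.
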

 
 \begin{proof}
 By \Cref{pro:TranslationDerivative}, $[\boldsymbol{D}^{\sigma_j}(\Pi_{1/\delta} \varphi)](x) =  \delta^{-\sigma_j} [\Pi_{1/\delta} (\boldsymbol{D}^{\sigma_j}\varphi)](x) = [\boldsymbol{D}^{\sigma_j}\varphi](x/\delta)$, so using \Cref{lem:norm} along with appropriate change of variable in the integration yields
\[
 \|w\|^2_{L^2(\mathbb{R})} = \sum_{j=1}^5 C_j \|\boldsymbol{D}^{\sigma_j}(\Pi_{1/\delta} \varphi)\|^2_{L^2(\mathbb{R})}
  = \sum_{j=1}^5  \frac{C_j}{\delta^{2\sigma_j-1}}  \| \boldsymbol{D}^{\sigma_j}\varphi\|^2_{L^2(\mathbb{R})},
  \]
  from which \cref{equ:W_nFourier} is obtained through application of
\Cref{lem:FTFD} (Fourier Transform) and  \Cref{thm:PAR} (Plancherel).
 \end{proof}
 
Recall from \Cref{rem:Cs}, $C_1, C_3, C_5$ are non-negative, however $C_2, C_4$ may be positive or non-positive, and this presents a constraint in guaranteeing the existence of solutions to \cref{eq:FODE}. Therefore, different cases for $C_2, C_4$ are treated separately to help materialize the conclusion in \Cref{thm:1stex}. \Cref{lem:normcase1}, \Cref{lem:normcase2}, and \Cref{lem:normcase3} below show different representations of norm of $w=L((\Pi_{1/\alpha} \varphi))$ according to different cases of $C_2, C_4$. More precisely, we discuss three different cases:
 \[(1)~ C_2\geq 0, C_4<0,\quad (2)~ C_2<0, C_4\geq 0,\quad (3)~ C_2<0, C_4<0.\]
 The case $C_2\ge 0, C_4\ge 0$ is treated in a straightforward manner later on.
 %------------------------------------------------
 \begin{lemma} \label{lem:normcase1}
 %------------------------------------------------
With $\{C_j,\sigma_j\}_{j=1}^5$ defined in \Cref{lem:norm}, assume $C_2\ge0$, $C_4<0$ and $b^2 > -C_4\,\alpha^{2(\sigma_5-\sigma_4)}$, where  $\alpha>0$ satisfies
\begin{equation} \label{eq:sum1Cj}
\sum_{j=1}^4 \frac{C_j}{\alpha^{2\sigma_j-1}} > 0.
\end{equation}
Then for $w = L(\Pi_{1/\alpha} \varphi)$, with $\varphi \in C_0^\infty(\mathbb{R})$, we have
\begin{equation}
\begin{aligned}
  \|w\|^2_{L^2(\mathbb{R})} &= \sum_{\substack{\ell=1\\ \ell \ne 4}}^5 \RN{1}_\ell + \sum_{\ell=1}^3 \RN{2}_\ell + \RN{3}_1,
 \end{aligned}
 \end{equation}
where
\begin{equation} \label{eq:Icase1}
\begin{aligned}
\RN{1}_\ell &=  Q_{1,\ell} \int_\mathbb{R}|2\pi \xi|^{2\sigma_\ell}|\widehat{\varphi}|^2\, {\rm d}\xi, ~~\ell=1,2,3,5,\\
\RN{2}_\ell &= Q_{2,\ell} \left(  \int_\mathbb{R}|2\pi \xi|^{2\sigma_\ell}|\widehat{\varphi}|^2\, {\rm d}\xi -  \int_{|\xi |>1}|2\pi \xi|^{2\sigma_4}|\widehat{\varphi}|^2\, {\rm d}\xi \right), ~~\ell=1,2,3,\\
\RN{3}_1 &= Q_{3,1} \left(\int_\mathbb{R}|2\pi \xi|^{2\sigma_5}|\widehat{\varphi}|^2\, {\rm d}\xi - \int_{|\xi|\le 1}|2\pi \xi|^{2\sigma_4}|\widehat{\varphi}|^2\, {\rm d}\xi \right),\\
\end{aligned}
\end{equation}
with constants $Q_{1,1}>0$, $Q_{1,5}>0$ and the rest of $Q_{j,\ell}\ge0$. 
 \end{lemma}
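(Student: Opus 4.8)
The plan is to treat the statement as a purely algebraic rearrangement of the norm formula supplied by \Cref{lem:normtwo}, in which the single offending (negative) coefficient is split and redistributed so that every coefficient in the final expression carries the advertised sign. To lighten notation I would abbreviate $\widetilde{C}_j := C_j\,\alpha^{1-2\sigma_j}$ and $I_j := \int_\mathbb{R}|2\pi\xi|^{2\sigma_j}|\widehat{\varphi}|^2\,{\rm d}\xi$, so that \Cref{lem:normtwo} (with $\delta=\alpha$) reads $\|w\|_{L^2(\mathbb{R})}^2=\sum_{j=1}^5\widetilde{C}_jI_j$. From \Cref{rem:Cs} and the standing case assumption I record the signs $\widetilde{C}_1>0$, $\widetilde{C}_2\ge0$, $\widetilde{C}_3\ge0$, $\widetilde{C}_4<0$, $\widetilde{C}_5>0$; the whole difficulty is the lone negative term $\widetilde{C}_4I_4$, and the construction is entirely a matter of choosing where its negative mass is absorbed.

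The first step is to split this term along the regions dictated by the target expression, writing $I_4=I_4^{>}+I_4^{\le}$ with $I_4^{>}=\int_{|\xi|>1}|2\pi\xi|^{2\sigma_4}|\widehat{\varphi}|^2\,{\rm d}\xi$ and $I_4^{\le}=\int_{|\xi|\le1}|2\pi\xi|^{2\sigma_4}|\widehat{\varphi}|^2\,{\rm d}\xi$. I would then route $\widetilde{C}_4I_4^{\le}$ into the term $\RN{3}_1$ by setting $Q_{3,1}:=-\widetilde{C}_4>0$, which (matching the coefficient of $I_5$, and recalling $\sigma_5=0$ so $\widetilde{C}_5=b^2\alpha$) forces $Q_{1,5}=\widetilde{C}_5-Q_{3,1}=\alpha\bigl(b^2+C_4\alpha^{-2\sigma_4}\bigr)$. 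Since $2(\sigma_5-\sigma_4)=-2\sigma_4$, the hypothesis $b^2>-C_4\,\alpha^{2(\sigma_5-\sigma_4)}$ is precisely $b^2+C_4\alpha^{-2\sigma_4}>0$, which delivers $Q_{1,5}>0$ exactly.

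For the remaining piece $\widetilde{C}_4I_4^{>}$ I would distribute $-\widetilde{C}_4$ proportionally across the three higher-order coefficients, setting $Q_{2,\ell}:=-\widetilde{C}_4\,\widetilde{C}_\ell/(\widetilde{C}_1+\widetilde{C}_2+\widetilde{C}_3)$ for $\ell=1,2,3$; the denominator is strictly positive because $\widetilde{C}_1>0$, and each $Q_{2,\ell}\ge0$. These account for the coefficient $-(Q_{2,1}+Q_{2,2}+Q_{2,3})=\widetilde{C}_4$ of $I_4^{>}$ and leave $Q_{1,\ell}=\widetilde{C}_\ell-Q_{2,\ell}=\widetilde{C}_\ell\,(\widetilde{C}_1+\widetilde{C}_2+\widetilde{C}_3+\widetilde{C}_4)/(\widetilde{C}_1+\widetilde{C}_2+\widetilde{C}_3)$ for $\ell=1,2,3$. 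Here the second hypothesis enters: \cref{eq:sum1Cj} says exactly $\widetilde{C}_1+\widetilde{C}_2+\widetilde{C}_3+\widetilde{C}_4>0$, so the last factor is positive, whence $Q_{1,\ell}\ge0$ and in particular $Q_{1,1}>0$ (as $\widetilde{C}_1>0$). A final bookkeeping check, collecting the coefficients of $I_1,I_2,I_3,I_5,I_4^{>},I_4^{\le}$ in $\sum_{\ell\ne4}\RN{1}_\ell+\sum_{\ell=1}^3\RN{2}_\ell+\RN{3}_1$, reproduces $\sum_{j=1}^5\widetilde{C}_jI_j$ term by term and closes the identity.

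I expect the only genuine subtlety to be confirming that the two numerical hypotheses are sharp enough to enforce all the sign constraints simultaneously: the proportional split is what decouples the $Q_{1,1}>0$ requirement (governed by \cref{eq:sum1Cj}) from the $Q_{1,5}>0$ requirement (governed by $b^2>-C_4\alpha^{2(\sigma_5-\sigma_4)}$), so that neither hypothesis has to do the work of both. The region decomposition into $|\xi|>1$ and $|\xi|\le1$ is then chosen so that, since $\sigma_1,\sigma_2,\sigma_3>\sigma_4>\sigma_5$ and $|\xi|>1$ implies $|2\pi\xi|>1$, the bracketed differences defining $\RN{2}_\ell$ are in fact non-negative pointwise; this is not needed for the identity itself but is what makes the representation useful for the coercivity argument in \Cref{thm:1stex}.
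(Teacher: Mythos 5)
Your proposal is correct and follows essentially the same route as the paper: apply \Cref{lem:normtwo} with $\delta=\alpha$, split the lone negative term $\widetilde{C}_4I_4$ over the regions $|\xi|>1$ and $|\xi|\le1$, absorb the low-frequency piece into the $\sigma_5$ term (where the hypothesis on $b^2$ gives $Q_{1,5}>0$) and the high-frequency piece into the $\ell=1,2,3$ terms (where \cref{eq:sum1Cj} gives $Q_{1,1}>0$). The only difference is cosmetic: the paper merely asserts the existence of non-positive $A_1,A_2,A_3$ summing to $C_4/\alpha^{2\sigma_4-1}$ with the required sign properties, whereas you exhibit them explicitly via the proportional split $Q_{2,\ell}=-\widetilde{C}_4\widetilde{C}_\ell/(\widetilde{C}_1+\widetilde{C}_2+\widetilde{C}_3)$, which is a valid (and slightly more concrete) instantiation of the same step.
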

 \begin{proof}
 Since $0<C_1, 0<2\sigma_1-1=\max_{j=1,2,3,4}\{2\sigma_j-1\}$, there always exists a sufficiently small positive number $\alpha$ such that inequality~\cref{eq:sum1Cj} holds true.  With this $\alpha$, condition $b^2 > -C_4\,\alpha^{2(\sigma_5-\sigma_4)}$ equivalently implies $\frac{C_5}{\alpha^{2\sigma_5-1}} + \frac{C_4}{\alpha^{2\sigma_4-1}}  > 0$.
Since $C_4<0$, from \cref{eq:sum1Cj}, there exist non-positive numbers $A_1,A_2,A_3$ such that
\[
\sum_{j=1}^3 A_j =\frac{C_4}{\alpha^{2\sigma_4-1}}, \quad \frac{C_1}{\alpha^{2\sigma_1-1}}+A_1>0,  \quad \frac{C_2}{\alpha^{2\sigma_2-1}}+A_2 \ge 0, \quad \frac{C_3}{\alpha^{2\sigma_3-1}}+A_3\ge0.
\]
With $\alpha$ in place of $\delta$ and $\frac{C_4}{\alpha^{2\sigma_4-1}}$ substituted by $\sum_{j=1}^3 A_j$ in \Cref{lem:normtwo}, and by adding and subtracting appropriate terms, one has
\[
\begin{aligned}
  \|w\|^2_{L^2(\mathbb{R})} &= \sum_{\substack{j=1,\\ j\ne4}}^5  \frac{C_j}{\alpha^{2\sigma_j-1}} 
\int_\mathbb{R}|2\pi \xi|^{2\sigma_j}|\widehat{\varphi}|^2\, {\rm d}\xi
+ \left(\sum_{j=1}^3 A_j \right) \int_\mathbb{R}|2\pi \xi|^{2\sigma_4}|\widehat{\varphi}|^2\, {\rm d}\xi\\
&= \sum_{\substack{\ell=1 \\ \ell \ne 4}}^5 \RN{1}_\ell + \sum_{\ell=1}^3 \RN{2}_\ell +  \RN{3}_1,
 \end{aligned}
\]
where all the terms are as in \cref{eq:Icase1}, with
\[
\begin{aligned}
Q_{1,\ell} &= \frac{C_\ell}{\alpha^{2\sigma_\ell-1}}+A_\ell, ~\ell=1,2,3, ~~ Q_{1,5} = \sum_{j=4}^5 \frac{C_j}{\alpha^{2\sigma_j-1}},\\
Q_{2,\ell} &= -A_\ell, ~\ell=1,2,3,\\
Q_{3,1} &= -\frac{C_4}{\alpha^{2\sigma_4-1}}.
\end{aligned}
\]
\end{proof}
 
 %
 %------------------------------------------------
 \begin{lemma} \label{lem:normcase2}
 %------------------------------------------------
With $\{C_j,\sigma_j\}_{j=1}^5$ defined in \Cref{lem:norm}, assume $C_2<0$, $C_4\ge0$, and 
$b^2>-\sum_{j=2}^4C_j\,\alpha^{2(\sigma_5-\sigma_j)}$, where $\alpha>0$ satisfies
\begin{equation} \label{eq:sum2Cj}
\sum_{j=1}^2 \frac{C_j}{\alpha^{2\sigma_j-1}} > 0.
\end{equation}
Then for $w = L(\Pi_{1/\alpha} \varphi)$, with  $\varphi \in C_0^\infty(\mathbb{R})$, we have
\begin{equation}
\begin{aligned}
  \|w\|^2_{L^2(\mathbb{R})} &= \sum_{\substack{\ell=1 \\ \ell \ne 2}}^5 \RN{1}_\ell + \sum_{\ell=3,4,5} \RN{2}_\ell + \RN{3}_1,
 \end{aligned}
 \end{equation}
where
\begin{equation} \label{eq:Icase2}
\begin{aligned}
\RN{1}_\ell &=  Q_{1,\ell} \int_\mathbb{R}|2\pi \xi|^{2\sigma_\ell}|\widehat{\varphi}|^2\, {\rm d}\xi, ~~\ell=1,3,4,5,\\
\RN{2}_\ell &= Q_{2,\ell} \left(  \int_\mathbb{R}|2\pi \xi|^{2\sigma_\ell}|\widehat{\varphi}|^2\, {\rm d}\xi -  \int_{|\xi |\le1}|2\pi \xi|^{2\sigma_2}|\widehat{\varphi}|^2\, {\rm d}\xi \right), ~~\ell=3,4,5,\\
\RN{3}_1 &= Q_{3,1} \left(\int_\mathbb{R}|2\pi \xi|^{2\sigma_1}|\widehat{\varphi}|^2\, {\rm d}\xi - \int_{|\xi|> 1}|2\pi \xi|^{2\sigma_2}|\widehat{\varphi}|^2\, {\rm d}\xi \right),\\
\end{aligned}
\end{equation}
with constants $Q_{1,1}>0$, $Q_{1,5}>0$ and the rest of $Q_{j,\ell}\ge0$.
 \end{lemma}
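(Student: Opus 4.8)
The plan is to reproduce the mechanism of the proof of \Cref{lem:normcase1}, but with the roles of the large- and small-exponent terms interchanged: here the single negative coefficient is $C_2$, whose exponent $2\sigma_2-1=2-\mu$ is the \emph{second largest} among the five, whereas in \Cref{lem:normcase1} the offending coefficient $C_4$ had the second \emph{smallest} exponent. Before touching the integral identity I would dispose of two bookkeeping facts. First, since $C_1>0$ and $2\sigma_1-1>2\sigma_2-1>0$ (indeed $\sigma_1-\sigma_2=(1-\mu)/2>0$ for $\mu\in(0,1)$), the positive term $C_1/\alpha^{2\sigma_1-1}$ dominates $|C_2|/\alpha^{2\sigma_2-1}$ as $\alpha\to0^+$, so a sufficiently small $\alpha>0$ validating \cref{eq:sum2Cj} exists. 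Second, multiplying the hypothesis $b^2>-\sum_{j=2}^4 C_j\alpha^{2(\sigma_5-\sigma_j)}$ by $\alpha^{1-2\sigma_5}=\alpha>0$ and recalling $C_5=b^2$ shows that this hypothesis is equivalent to the ``room'' inequality $\sum_{j=2}^5 C_j/\alpha^{2\sigma_j-1}>0$.

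Next I would take the expression for $\|w\|^2_{L^2(\mathbb{R})}$ furnished by \Cref{lem:normtwo} (with $\delta=\alpha$) and operate only on the offending $\sigma_2$-term, splitting its integral as $\int_{\mathbb{R}}|2\pi\xi|^{2\sigma_2}|\widehat{\varphi}|^2\,{\rm d}\xi=\int_{|\xi|>1}(\cdots)+\int_{|\xi|\le1}(\cdots)$. The geometry is dictated by the exponent ordering $2\sigma_1-1>2\sigma_2-1>2\sigma_3-1>2\sigma_4-1>2\sigma_5-1$: on $|\xi|>1$ the negative $\sigma_2$-piece is paired with the lone larger-exponent term $\sigma_1$, while on $|\xi|\le1$ it is distributed among the three smaller-exponent terms $\sigma_3,\sigma_4,\sigma_5$. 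Mirroring the non-positive numbers $A_1,A_2,A_3$ of \Cref{lem:normcase1}, I would invoke the room inequality together with $C_3,C_4\ge0$ and $C_5=b^2>0$ to select non-positive $B_3,B_4,B_5$ with $\sum_{\ell=3}^5 B_\ell=C_2/\alpha^{2\sigma_2-1}$ such that $C_\ell/\alpha^{2\sigma_\ell-1}+B_\ell\ge0$ for $\ell=3,4$ while $C_5/\alpha^{2\sigma_5-1}+B_5>0$; the achievable range of $\sum_{\ell=3}^5 B_\ell$ under these sign constraints is precisely $\bigl(-\sum_{\ell=3}^5 C_\ell/\alpha^{2\sigma_\ell-1},\,0\bigr]$, and the room inequality places $C_2/\alpha^{2\sigma_2-1}$ strictly inside it, so such a choice is available.

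Substituting $C_2/\alpha^{2\sigma_2-1}=\sum_{\ell=3}^5 B_\ell$ only in the $|\xi|\le1$ piece and regrouping, I would read off the coefficients $Q_{1,1}=C_1/\alpha^{2\sigma_1-1}+C_2/\alpha^{2\sigma_2-1}$, $Q_{1,\ell}=C_\ell/\alpha^{2\sigma_\ell-1}+B_\ell$ for $\ell=3,4,5$, $Q_{2,\ell}=-B_\ell$ for $\ell=3,4,5$, and $Q_{3,1}=-C_2/\alpha^{2\sigma_2-1}$. A termwise comparison of the coefficient of each $\int_{\mathbb{R}}|2\pi\xi|^{2\sigma_\ell}|\widehat{\varphi}|^2$ and of the two pieces $\int_{|\xi|>1}$, $\int_{|\xi|\le1}$ of the $\sigma_2$-integral then recovers the \Cref{lem:normtwo} formula exactly, establishing the claimed identity \cref{eq:Icase2}. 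The sign assertions are immediate: $Q_{1,1}>0$ by \cref{eq:sum2Cj}; $Q_{1,5}>0$ since it equals $C_5/\alpha^{2\sigma_5-1}+B_5>0$; $Q_{1,3},Q_{1,4}\ge0$ by the construction of $B_3,B_4$; $Q_{2,\ell}=-B_\ell\ge0$ since each $B_\ell\le0$; and $Q_{3,1}\ge0$ since $C_2<0$.

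The step I expect to be the crux is the second paragraph: recognizing that the hypothesis on $b^2$ is nothing but the precise amount of slack needed to absorb the $|\xi|\le1$ portion of the negative $\sigma_2$-term into $\sigma_3,\sigma_4,\sigma_5$ while keeping $Q_{1,5}$ strictly positive, exactly as \cref{eq:sum2Cj} supplies the slack to absorb the $|\xi|>1$ portion into $\sigma_1$. Once the admissibility of the split $B_3,B_4,B_5$ is secured, the remainder is the same algebraic reshuffling as in \Cref{lem:normcase1}.
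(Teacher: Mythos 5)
Your proposal is correct and follows essentially the same route as the paper: establish the existence of a small $\alpha$ from the dominance of the $C_1$ term, convert the $b^2$ hypothesis into $\sum_{j=2}^5 C_j/\alpha^{2\sigma_j-1}>0$, choose non-positive $B_3,B_4,B_5$ summing to $C_2/\alpha^{2\sigma_2-1}$ with the same sign constraints, and regroup the \Cref{lem:normtwo} identity to obtain exactly the coefficients $Q_{1,1}=\sum_{\ell=1}^{2}C_\ell/\alpha^{2\sigma_\ell-1}$, $Q_{1,\ell}=C_\ell/\alpha^{2\sigma_\ell-1}+B_\ell$, $Q_{2,\ell}=-B_\ell$, $Q_{3,1}=-C_2/\alpha^{2\sigma_2-1}$ that the paper uses. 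Your explicit description of the admissible range of $\sum_{\ell=3}^5 B_\ell$ is a slightly more detailed justification of a step the paper asserts without comment, but the argument is the same.
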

 \begin{proof}
 Since $0<C_1, 0<2\sigma_1-1=\max_{j=1,2}\{2\sigma_j-1\}$, there always exists a sufficiently small positive number $\alpha$ such that \cref{eq:sum2Cj} holds true.
  With this $\alpha$, condition $b^2>-\sum_{j=2}^4C_j\,\alpha^{2(\sigma_5-\sigma_j)}$ equivalently implies
  \begin{equation}\label{equ:TheSecondLemmaDecomposition}
   \sum_{j=2}^5 \frac{C_j}{\alpha^{2\sigma_j-1}}  > 0,
  \end{equation}
Since $C_2<0$, from inequality~\cref{equ:TheSecondLemmaDecomposition}, there exist non-positive numbers $B_3,B_4,B_5$ such that
\[
\sum_{j=3}^5 B_j = \dfrac{C_2}{\alpha^{2\sigma_2-1}}, \quad  \dfrac{C_3}{\alpha^{2\sigma_3-1}}+B_3 \geq 0, \quad \dfrac{C_4}{\alpha^{2\sigma_4-1}}+B_4\geq 0, \quad \frac{C_5}{\alpha^{2\sigma_5-1}}+B_5>0.
\]
With $\alpha$ in place of $\delta$ and $\frac{C_2}{\alpha^{2\sigma_2-1}}$ substituted by $\sum_{j=3}^5 B_j$ in \Cref{lem:normtwo}, and by adding and subtracting appropriate terms, one has
\[
\begin{aligned}
  \|w\|^2_{L^2(\mathbb{R})} &= \sum_{\substack{j=1,\\ j\ne2}}^5  \frac{C_j}{\alpha^{2\sigma_j-1}} 
\int_\mathbb{R}|2\pi \xi|^{2\sigma_j}|\widehat{\varphi}|^2\, {\rm d}\xi
+ \left(\sum_{j=3}^5 B_j \right) \int_\mathbb{R}|2\pi \xi|^{2\sigma_2}|\widehat{\varphi}|^2\, {\rm d}\xi\\
&= \sum_{\substack{\ell=1 \\ \ell \ne 2}}^5 \RN{1}_\ell + \sum_{\ell=3,4,5} \RN{2}_\ell + \RN{3}_1,
 \end{aligned}
 \]
where all the terms are as in \cref{eq:Icase2}, with
\[
\begin{aligned}
Q_{1,1} &= \sum_{\ell=1}^2 \frac{C_\ell}{\alpha^{2\sigma_\ell-1}},~~
Q_{1,\ell} = \dfrac{C_\ell}{\alpha^{2\sigma_\ell-1}}+B_\ell, ~\ell=3,4,5,\\
Q_{2,\ell} &= -B_\ell, ~\ell=3,4,5,\\
Q_{3,1} &= -\frac{C_2}{\alpha^{2\sigma_2-1}}.
\end{aligned}
\]
 \end{proof} 
 %
 %------------------------------------------------
 \begin{lemma} \label{lem:normcase3}
 %------------------------------------------------
With $\{C_j,\sigma_j\}_{j=1}^5$ defined in \Cref{lem:norm}, assume $C_2<0$, $C_4<0$, and

$b^2> -\sum_{j=2,4}C_j\, \alpha^{2(\sigma_5-\sigma_j)} $, where $\alpha>0$ satisfies
\begin{equation} \label{eq:sum3Cj}
\sum_{j=1,2,4} \frac{C_j}{\alpha^{2\sigma_j-1}} > 0.
\end{equation}
Then for $w = L(\Pi_{1/\alpha} \varphi)$, with $\varphi \in C_0^\infty(\mathbb{R})$, we have
\begin{equation}
\begin{aligned}
  \|w\|^2_{L^2(\mathbb{R})} &= \sum_{\ell=1,3,5} \RN{1}_\ell + \sum_{\ell=2,4} \RN{2}_\ell + \sum_{\ell=2,4} \RN{3}_\ell,
 \end{aligned}
 \end{equation}
where
\begin{equation} \label{eq:Icase3}
\begin{aligned}
\RN{1}_\ell &=  Q_{1,\ell} \int_\mathbb{R}|2\pi \xi|^{2\sigma_\ell}|\widehat{\varphi}|^2\, {\rm d}\xi, ~~\ell=1,3,5,\\
\RN{2}_\ell &= Q_{2,\ell} \left(  \int_\mathbb{R}|2\pi \xi|^{2\sigma_1}|\widehat{\varphi}|^2\, {\rm d}\xi -  \int_{|\xi | \ge1}|2\pi \xi|^{2\sigma_\ell}|\widehat{\varphi}|^2\, {\rm d}\xi \right), ~~\ell=2,4,\\
\RN{3}_\ell &= Q_{3,\ell} \left(\int_\mathbb{R}|2\pi \xi|^{2\sigma_5}|\widehat{\varphi}|^2\, {\rm d}\xi - \int_{|\xi|< 1}|2\pi \xi|^{2\sigma_\ell}|\widehat{\varphi}|^2\, {\rm d}\xi \right), ~~\ell=2,4,
\end{aligned}
\end{equation}
with constants $Q_{1,1}>0$, $Q_{1,5}>0$ and the rest of $Q_{j,\ell}\ge0$. 
 \end{lemma}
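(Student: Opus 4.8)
The plan is to follow the template of \Cref{lem:normcase1} and \Cref{lem:normcase2}: begin from the Fourier representation $\|w\|^2_{L^2(\mathbb{R})}=\sum_{j=1}^5 c_j I_j$ supplied by \Cref{lem:normtwo} with $\delta=\alpha$, where I abbreviate $c_j:=C_j/\alpha^{2\sigma_j-1}$ and $I_j:=\int_\mathbb{R}|2\pi\xi|^{2\sigma_j}|\widehat{\varphi}|^2\,{\rm d}\xi$, and then dispose of the two negative summands (those carrying $C_2$ and $C_4$, since $C_2,C_4<0$) by peeling each offending integral into its high- and low-frequency parts and routing the high part into the top order $\sigma_1$ and the low part into the zeroth order $\sigma_5$.

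First I would dispatch the two preliminary facts that govern the signs of the leading coefficients. Since $C_1>0$ and, for $\mu\in(0,1)$, one has $\sigma_1>\sigma_2>\sigma_4$ so that $2\sigma_1-1=\max_{j=1,2,4}\{2\sigma_j-1\}$, the summand $C_1/\alpha^{2\sigma_1-1}$ dominates as $\alpha\to0^+$; hence a sufficiently small $\alpha>0$ renders \cref{eq:sum3Cj} valid, which secures existence of the required $\alpha$. Next, multiplying the hypothesis $b^2>-\sum_{j=2,4}C_j\alpha^{2(\sigma_5-\sigma_j)}$ through by $\alpha^{1-2\sigma_5}=\alpha$ and recalling $C_5=b^2$, $\sigma_5=0$, I would record that this hypothesis is exactly equivalent to $\sum_{j=2,4,5}c_j>0$, the inequality that will later yield $Q_{1,5}>0$.

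The heart of the argument is then a direct regrouping. For each $\ell\in\{2,4\}$ write $I_\ell=J_\ell^++J_\ell^-$ with $J_\ell^+:=\int_{|\xi|\ge1}|2\pi\xi|^{2\sigma_\ell}|\widehat{\varphi}|^2\,{\rm d}\xi$ and $J_\ell^-:=\int_{|\xi|<1}|2\pi\xi|^{2\sigma_\ell}|\widehat{\varphi}|^2\,{\rm d}\xi$, and set $Q_{2,\ell}=Q_{3,\ell}=-c_\ell$. With $\RN{2}_\ell=Q_{2,\ell}(I_1-J_\ell^+)$ and $\RN{3}_\ell=Q_{3,\ell}(I_5-J_\ell^-)$ as in \cref{eq:Icase3}, an immediate rearrangement gives $c_\ell I_\ell=\RN{2}_\ell+\RN{3}_\ell+c_\ell I_1+c_\ell I_5$. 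Substituting this for $\ell=2,4$ into $\sum_{j=1}^5 c_j I_j$ and collecting the $I_1$- and $I_5$-contributions produces the claimed identity with $Q_{1,1}=c_1+c_2+c_4$, $Q_{1,3}=c_3$, $Q_{1,5}=c_5+c_2+c_4$, and $Q_{2,\ell}=Q_{3,\ell}=-c_\ell$ for $\ell=2,4$; matching the coefficients of $I_1,I_3,I_5,J_\ell^\pm$ against $\sum_j c_j I_j$ confirms these values are forced.

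It remains to verify the sign assertions: $Q_{1,1}=\sum_{j=1,2,4}c_j>0$ is exactly \cref{eq:sum3Cj}; $Q_{1,5}=\sum_{j=2,4,5}c_j>0$ is exactly the reinterpreted $b^2$ hypothesis; $Q_{1,3}=a^2/\alpha\ge0$; and $Q_{2,\ell}=Q_{3,\ell}=-c_\ell>0$ because $C_2,C_4<0$. I expect the only genuinely delicate point to be the bookkeeping of which negative piece is charged to which leading order: routing both high-frequency pieces to $\sigma_1$ is precisely what forces the constraint \cref{eq:sum3Cj} on $Q_{1,1}$, while routing both low-frequency pieces to $\sigma_5$ is precisely what forces the $b^2$ hypothesis on $Q_{1,5}$. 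In contrast to \Cref{lem:normcase1} and \Cref{lem:normcase2}, no auxiliary non-positive splitting parameters (the $A_j$ or $B_j$) are needed here, because each negative term is absorbed wholesale into a single order rather than distributed across several.
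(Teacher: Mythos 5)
Your proposal is correct and takes essentially the same route as the paper's proof: both apply \Cref{lem:normtwo} with $\delta=\alpha$, split each negative summand ($\ell=2,4$) into high- and low-frequency parts absorbed into the $\sigma_1$ and $\sigma_5$ orders respectively, and arrive at the identical coefficients $Q_{1,1}=\sum_{j=1,2,4}C_j/\alpha^{2\sigma_j-1}$, $Q_{1,3}=C_3/\alpha^{2\sigma_3-1}$, $Q_{1,5}=\sum_{j=2,4,5}C_j/\alpha^{2\sigma_j-1}$, and $Q_{2,\ell}=Q_{3,\ell}=-C_\ell/\alpha^{2\sigma_\ell-1}$. Your write-up is in fact somewhat more explicit than the paper's ``adding and subtracting appropriate terms,'' and your observation that no auxiliary splitting parameters are needed in this case is accurate.
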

 \begin{proof}
 Since $0< C_1, 0<2\sigma_1-1=\max_{ j=1,2,4}\{2\sigma_j-1\}$, there always exists a sufficiently small positive number $\alpha$ such that \cref{eq:sum3Cj} holds true. With this $\alpha$, condition $b^2> -\sum_{j=2,4}C_j\, \alpha^{2(\sigma_5-\sigma_j)} $ equivalently implies $\sum_{j=2,4,5} \frac{C_j}{\alpha^{2\sigma_j-1}}  > 0$.
With $\alpha$ in place of $\delta$ in \Cref{lem:normtwo}, and by adding and subtracting appropriate terms, one has
\begin{equation}
\begin{aligned}
  \|w\|^2_{L^2(\mathbb{R})} &= \sum_{\substack{j=1,\\ j\ne2,4}}^5  \frac{C_j}{\alpha^{2\sigma_j-1}} 
\int_\mathbb{R}|2\pi \xi|^{2\sigma_j}|\widehat{\varphi}|^2\, {\rm d}\xi
+  \sum_{\substack{j=2,4}} \frac{C_j}{\alpha^{2\sigma_j-1}}  \int_\mathbb{R}|2\pi \xi|^{2\sigma_j}|\widehat{\varphi}|^2\, {\rm d}\xi  \\
&= \sum_{\ell=1,3,5} \RN{1}_\ell + \sum_{\ell=2,4} \RN{2}_\ell + \sum_{\ell=2,4} \RN{3}_\ell,
 \end{aligned}
 \end{equation}
where all the terms are as in \cref{eq:Icase3}, with
\[
\begin{aligned}
Q_{1,1} &= \sum_{j=1,2,4} \frac{C_j}{\alpha^{2\sigma_j-1}}, ~~ 
Q_{1,3} = \frac{C_3}{\alpha^{2\sigma_3-1}}, ~~
Q_{1,5} &= \sum_{j=2,4,5} \frac{C_j}{\alpha^{2\sigma_j-1}},\\
Q_{2,\ell} &= -\dfrac{C_\ell}{\alpha^{2\sigma_\ell-1}}, ~\ell=2,4,\\
Q_{3,\ell} &= -\dfrac{C_\ell}{\alpha^{2\sigma_\ell-1}}, ~\ell=2,4.
\end{aligned}
\]
 \end{proof}
 
 Notice that in view of $\sigma_1>\cdots>\sigma_5$, \Cref{lem:normcase1} implies that each $\RN{1}_\ell$, $\RN{2}_\ell$, $\RN{3}_1$ is at least non-negative. The same situation applies to \Cref{lem:normcase2} and \Cref{lem:normcase3}.

\subsection{Existence, Uniqueness, and Regularity of the Solution}
At this stage, we are ready to prove the existence and uniqueness of strong  solutions to problem~\cref{eq:FODE}. The following theorem implies that, roughly speaking, if $|b|$ is big enough compared to other coefficients, there always exists a unique solution $u\in \widehat{H}^{2-\mu}(\mathbb{R})$ to problem~\cref{eq:FODE}. 

\begin{theorem} \label{thm:1stex}
Consider problem~ \cref{eq:FODE} with $\{C_j,\sigma_j\}_{j=1}^5$  defined in \Cref{lem:norm}. For either of the following cases,
\begin{itemize}
\item [(i)] $C_2 \ge 0$, $C_4 \ge 0$ or
\item [(ii)] $C_2 \ge 0$, $C_4 < 0$, $b^2 > -C_4\,\alpha^{2(\sigma_5-\sigma_4)}$, and $\alpha>0$ with $\displaystyle \sum_{j=1}^4 \frac{C_j}{\alpha^{2\sigma_j-1}} > 0$, or
\item [(iii)] $C_2 < 0$, $C_4 \ge 0$, $b^2>-\displaystyle\sum_{j=2}^4C_j\,\alpha^{2(\sigma_5-\sigma_j)}$, and $\alpha>0$ with $\displaystyle \sum_{j=1}^2 \frac{C_j}{\alpha^{2\sigma_j-1}} > 0$, or
\item [(iv)] $C_2<0$, $C_4<0$, $b^2> -\displaystyle \sum_{j=2,4}C_j\, \alpha^{2(\sigma_5-\sigma_j)} $, and $\alpha>0$ with $\displaystyle \sum_{j=1,2,4} \frac{C_j}{\alpha^{2\sigma_j-1}} > 0$,
\end{itemize}
there exists a unique solution $u\in \widehat{H}^{2-\mu}(\mathbb{R})$ that satisfies \cref{eq:FODE}. Furthermore,
\begin{equation}
 \|u\|_{\widehat{H}^{2-\mu}(\mathbb{R})}\leq C\|f\|_{L^2(\mathbb{R})},
 \end{equation}
 for some positive constants $C>0$ depending only on $L$. 
\end{theorem}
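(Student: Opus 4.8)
The plan is to derive the entire statement from a single coercivity (a priori) estimate on test functions and then run the standard density-plus-completeness argument. Concretely, I would first prove that there is a constant $c>0$ depending only on the coefficients of $L$ such that
\[
\|Lv\|_{L^2(\mathbb{R})}^2 \ge c\,\|v\|_{\widehat{H}^{2-\mu}(\mathbb{R})}^2, \qquad \forall\, v\in C_0^\infty(\mathbb{R}).
\]
Recalling $\sigma_1=2-\mu$ and $\sigma_5=0$, and that \Cref{thm:EquivalenceOfSpaces} gives $|v|_{\widehat{H}^{2-\mu}(\mathbb{R})}=\|\boldsymbol{D}^{2-\mu}v\|_{L^2(\mathbb{R})}$, the target right-hand side equals $\|\boldsymbol{D}^{\sigma_1}v\|_{L^2(\mathbb{R})}^2+\|\boldsymbol{D}^{\sigma_5}v\|_{L^2(\mathbb{R})}^2$, so it suffices to dominate these two anchor terms from below.

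For the coercivity estimate I would split along the four cases. In case (i) all $C_j\ge 0$ with $C_1,C_5>0$ by \Cref{rem:Cs}, so \Cref{lem:norm} applied directly to $v$ gives
\[
\|Lv\|_{L^2(\mathbb{R})}^2=\sum_{j=1}^5 C_j\|\boldsymbol{D}^{\sigma_j}v\|_{L^2(\mathbb{R})}^2 \ge C_1\|\boldsymbol{D}^{\sigma_1}v\|_{L^2(\mathbb{R})}^2+C_5\|\boldsymbol{D}^{\sigma_5}v\|_{L^2(\mathbb{R})}^2 \ge \min(C_1,C_5)\,\|v\|_{\widehat{H}^{2-\mu}(\mathbb{R})}^2.
\]
In cases (ii)--(iv) the possibly negative $C_2$ or $C_4$ block a direct use of \Cref{lem:norm}; here I would invoke the matching \Cref{lem:normcase1}, \Cref{lem:normcase2}, or \Cref{lem:normcase3} with the prescribed $\alpha>0$. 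Writing an arbitrary $v\in C_0^\infty(\mathbb{R})$ as $v=\Pi_{1/\alpha}\varphi$ with $\varphi=\Pi_\alpha v\in C_0^\infty(\mathbb{R})$, each case lemma renders $\|Lv\|_{L^2(\mathbb{R})}^2$ as a sum of terms that are all nonnegative (by $\sigma_1>\cdots>\sigma_5$, as observed immediately after \Cref{lem:normcase3}). Discarding every term except $\RN{1}_1$ and $\RN{1}_5$, whose coefficients $Q_{1,1},Q_{1,5}$ are strictly positive, yields
\[
\|Lv\|_{L^2(\mathbb{R})}^2 \ge Q_{1,1}\!\int_{\mathbb{R}}|2\pi\xi|^{2\sigma_1}|\widehat{\varphi}|^2\,{\rm d}\xi + Q_{1,5}\!\int_{\mathbb{R}}|2\pi\xi|^{2\sigma_5}|\widehat{\varphi}|^2\,{\rm d}\xi \ge \min(Q_{1,1},Q_{1,5})\,\|\varphi\|_{\widehat{H}^{2-\mu}(\mathbb{R})}^2,
\]
where the identification of the two integrals with $\|\boldsymbol{D}^{\sigma_1}\varphi\|_{L^2(\mathbb{R})}^2$ and $\|\varphi\|_{L^2(\mathbb{R})}^2$ uses \Cref{lem:FTFD} and \Cref{thm:PAR}. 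Finally, the dilation scaling $\|\boldsymbol{D}^{\sigma}(\Pi_\alpha v)\|_{L^2(\mathbb{R})}^2=\alpha^{2\sigma-1}\|\boldsymbol{D}^{\sigma}v\|_{L^2(\mathbb{R})}^2$ from \Cref{pro:TranslationDerivative} gives $\|\varphi\|_{\widehat{H}^{2-\mu}(\mathbb{R})}^2\ge \min(\alpha^{-1},\alpha^{2(2-\mu)-1})\,\|v\|_{\widehat{H}^{2-\mu}(\mathbb{R})}^2$, and combining produces the coercivity estimate with $c>0$ depending only on $L$.

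With coercivity in hand the remaining steps are routine. I would first check that $L$ extends to a bounded operator $\widehat{H}^{2-\mu}(\mathbb{R})\to L^2(\mathbb{R})$: for $u\in\widehat{H}^{2-\mu}(\mathbb{R})$ the weak derivatives $\boldsymbol{D}^{2-\mu}u,\boldsymbol{D}^{(2-\mu)*}u,Du$ all lie in $L^2(\mathbb{R})$ by \Cref{thm:EquivalenceOfSpaces} (note $2-\mu>1$), and the Fourier multiplier $H(\xi)$ of \Cref{lem:dense} satisfies $|H(\xi)|\le C(1+|2\pi\xi|^{2-\mu})$, so Plancherel yields $\|Lu\|_{L^2(\mathbb{R})}\le C\|u\|_{\widehat{H}^{2-\mu}(\mathbb{R})}$. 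Since $C_0^\infty(\mathbb{R})$ is dense in $\widehat{H}^{2-\mu}(\mathbb{R})$ by \Cref{cor:DensityOfSobolevsSpaces}, the coercivity estimate passes to the closure, giving $\|Lu\|_{L^2(\mathbb{R})}\ge \sqrt{c}\,\|u\|_{\widehat{H}^{2-\mu}(\mathbb{R})}$ for all $u\in\widehat{H}^{2-\mu}(\mathbb{R})$; this at once delivers uniqueness and the stability bound with $C=1/\sqrt{c}$. For existence, given $f\in L^2(\mathbb{R})$, \Cref{lem:dense} supplies $v_n\in C_0^\infty(\mathbb{R})$ with $Lv_n\to f$ in $L^2(\mathbb{R})$; the coercivity estimate forces $\{v_n\}$ to be Cauchy in the complete space $\widehat{H}^{2-\mu}(\mathbb{R})$, hence $v_n\to u$ there, and boundedness of $L$ gives $Lu=\lim_n Lv_n=f$.

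The genuine difficulty is concentrated entirely in the coercivity estimate for cases (ii)--(iv): the intermediate-order contributions $C_2\|\boldsymbol{D}^{\sigma_2}v\|_{L^2(\mathbb{R})}^2$ and $C_4\|\boldsymbol{D}^{\sigma_4}v\|_{L^2(\mathbb{R})}^2$ may be negative and cannot simply be dropped, so one must redistribute them against the two sign-definite anchor terms of order $\sigma_1$ and $\sigma_5$. This redistribution is exactly what the dilation $\Pi_{1/\alpha}$ and the threshold conditions on $b^2$ and $\alpha$ in \Cref{lem:normcase1}--\Cref{lem:normcase3} accomplish, guaranteeing a nonnegative regrouping; the only additional care needed in the present proof is the bookkeeping that converts the $\varphi$-estimate back into a $v$-estimate through the dilation scaling.
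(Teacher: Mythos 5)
Your proposal is correct and follows essentially the same route as the paper: the decisive ingredient in both is the case-by-case lower bound on $\|Lv\|_{L^2(\mathbb{R})}^2$ obtained from \Cref{lem:norm} and \Cref{lem:normcase1}--\Cref{lem:normcase3} (retaining only the sign-definite $\sigma_1$ and $\sigma_5$ terms after the dilation by $\alpha$), combined with the density of the range of $L$ on $C_0^\infty(\mathbb{R})$ from \Cref{lem:dense} and a Cauchy-sequence/completeness argument. The only difference is organizational—you package the key computation as a standalone coercivity inequality plus boundedness of $L$, while the paper applies the same estimate directly to differences $w_n-w_m$—which does not change the substance.
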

\begin{proof}
These four different cases are discussed together in a unified way as follows.

Fix $f \in L^2(\mathbb{R})$ in \cref{eq:FODE}. 
Since $b\neq 0$ in \cref{eq:FODE}, for each case in the theorem, \Cref{lem:dense} guarantees that there is a Cauchy sequence $\{w_n\} \subset M$ in $L^2(\mathbb{R})$ such that
\begin{equation}\label{equ:limit}
\lim_{n\rightarrow \infty}\|w_n-f\|^2_{L^2(\mathbb{R})}=0,
\end{equation}
 where $w_n=Lu_n$ for certain sequence $\{u_n\}\subset{C_0^\infty(\mathbb{R})}$. 
Now we intend to show that equation~\cref{equ:limit} implies both $\{u_n\},\{\boldsymbol{D}^{2-\mu}u_n\}$ are actually Cauchy sequences in $L^2(\mathbb{R})$ under each case in \Cref{thm:1stex}. 

 To do so, we compute $ \|w_n-w_m\|^2_{L^2(\mathbb{R})}$ in the following in terms of $\{\varphi_n\}$, where $\{\varphi_n\}=\{\Pi_{\beta}u_n\}$ (namely rewrite sequence $\{u_n\}$ as $\{\Pi_{1/\beta}\varphi_n\}$), with  $\beta>0$  chosen in this way: $\beta=1$ for case $(i)$, $\beta=\alpha$ for cases $(ii), (iii), (iv)$.
Let us note carefully that case $(i), (ii), (iii), (iv)$ allow us to apply \Cref{lem:norm}, \Cref{lem:normcase1}, \Cref{lem:normcase2} and  \Cref{lem:normcase3} accordingly. By doing so, we have
 \begin{equation}\label{eq:W_n}
 \begin{aligned}
 \|w_n-w_m\|^2_{L^2(\mathbb{R})}
 &= \sum_{\ell=1,5} P_{1,\ell} \int_\mathbb{R}|2\pi \xi|^{2\sigma_\ell}|\widehat{\varphi}_n-\widehat{\varphi}_m|^2\, {\rm d}\xi + \text{Remainder},
 \end{aligned}
 \end{equation}
where $\text{Remainder}\ge 0$, while $P_{1,1}$ and $P_{1,5}$ are both strictly positive, with
\[
P_{1,1} =
\begin{cases}
\displaystyle C_1 & \text{for case \emph{(i)}},\\

Q_{1,1} &  \text{for case \emph{(ii)}},\\

Q_{1,1} &  \text{for case \emph{(iii)}},\\

Q_{1,1}  & \text{for case \emph{(iv)}},
\end{cases}
~~P_{1,5} =
\begin{cases}
\displaystyle C_5 & \text{for case \emph{(i)}},\\

Q_{1,5} &  \text{for case \emph{(ii)}},\\

Q_{1,5}&  \text{for case \emph{(iii)}},\\

Q_{1,5}  & \text{for case \emph{(iv)}}.
\end{cases}
\]
Given any $\epsilon>0$, there exists a positive integer $N$ such that, for $n,m>N$, it is true that  $\|w_n-w_m\|^2_{L^2(\mathbb{R})} < \epsilon$.
Since every term in equation~\cref{eq:W_n} is nonnegative with the first two are positive, it means 
\[
P_{1,\ell} \int_\mathbb{R}|2\pi \xi|^{2\sigma_\ell}|\widehat{\varphi}_n-\widehat{\varphi}_m|^2\, {\rm d}\xi < \epsilon, ~~ \ell=1,5. 
\]
Based on the fact that
\begin{equation}\label{equ:ConvertingTerm}
\|\boldsymbol{D}^{\sigma_\ell}u_n-\boldsymbol{D}^{\sigma_\ell}u_m\|^2_{L^2(\mathbb{R})} =
\dfrac{1}{\beta^{2\sigma_\ell-1}}\int_{\mathbb{R}}|2\pi \xi|^{2\sigma_\ell}|\widehat{\varphi}_n-\widehat{\varphi}_m|^2\, {\rm d}\xi,  ~~ \ell=1,5,
\end{equation}
it is concluded that
\[
\|\boldsymbol{D}^{\sigma_\ell}u_n-\boldsymbol{D}^{\sigma_\ell}u_m\|^2_{L^2(\mathbb{R})} < \frac{\epsilon}{\beta^{2\sigma_\ell-1} \, P_{1,\ell}}, ~~ \ell=1,5. 
\]
Recall that $\sigma_1 = 2 - \mu$ and $\sigma_5 = 0$, so this last inequality implies that 
$\{\boldsymbol{D}^{2-\mu}u_n\}$ and $\{u_n\}$ are Cauchy sequences for each case in \Cref{thm:1stex}. 
Denoting the limit by
 \[
 u=\lim_{n\rightarrow \infty}u_n,
 \]
  then \Cref{cor:DensityOfC} gives $u\in \widehat{H}^{2-\mu}(\mathbb{R})$.
 Furthermore, \Cref{thm:ImbedingTheorem} and \Cref{thm:EquivalenceOfSpaces} guarantee the existence of $\boldsymbol{D}^{2-\mu}u$, $\boldsymbol{D}^{(2-\mu)*}u$, and $Du$, therefore $u$ is the solution of \cref{eq:FODE}. Actually, by revisiting \cref{equ:limit}, it is seen that
 \begin{equation}
 \begin{aligned}
 f&=\lim_{n\rightarrow \infty} w_n\\
 &=\lim_{n\rightarrow \infty} \left( p\boldsymbol{D}^{2-\mu} u_n+q\boldsymbol{D}^{(2-\mu)*}u_n+aDu_n+b u_n \right)\\
 &=p\boldsymbol{D}^{2-\mu} u+q\boldsymbol{D}^{(2-\mu)*}u+aDu+b u.
 \end{aligned}
 \end{equation}

To estimate the norm of $u$, we revisit \cref{eq:W_n} to get
\[
 \begin{aligned}
 \|w_n\|^2_{L^2(\mathbb{R})}&= \sum_{j=1}^5 C_j \|\boldsymbol{D}^{\sigma_j} u_n\|^2_{L^2(\mathbb{R})}
 = \sum_{\ell=1,5} P_{1,\ell} \int_\mathbb{R}|2\pi \xi|^{2\sigma_\ell}|\widehat{\varphi}_n|^2\, {\rm d}\xi + \text{Remainder}.
 \end{aligned}
 \]
 Simply by noticing that $\text{Remainder}\ge 0$, while $P_{1,1}>0$, $P_{1,5}>0$ and using equation~\cref{equ:ConvertingTerm}, we obtain
 \begin{equation}
 \begin{aligned}
 \|w_n\|^2_{L^2(\mathbb{R})} &\ge \sum_{\ell=1,5} P_{1,\ell} \int_\mathbb{R}|2\pi \xi|^{2\sigma_\ell}|\widehat{\varphi}_n|^2\, {\rm d}\xi\\
 &=  \sum_{\ell=1,5}  \beta^{2\sigma_\ell-1} P_{1,\ell} \|\boldsymbol{D}^{\sigma_\ell}u_n\|^2_{L^2(\mathbb{R})}\\
 &\ge \frac{1}{C^2} \sum_{\ell=1,5}   \|\boldsymbol{D}^{\sigma_\ell}u_n\|^2_{L^2(\mathbb{R})},
 \end{aligned}
 \end{equation}
 where $\displaystyle \frac{1}{C} = \left(\min_{\ell=1,5}\{  \beta^{2\sigma_\ell-1} P_{1,\ell} \}\right)^{1/2}$. By taking the limit as $n\rightarrow \infty$, the last inequality produces 
 \[
 \sum_{\ell=1,5}   \|\boldsymbol{D}^{\sigma_\ell}u\|^2_{L^2(\mathbb{R})}\leq C^2\|f\|^2_{L^2(\mathbb{R})}.
 \]
 Taking the root at both sides, by \Cref{def:FractionalSobolevSpaces}  and \Cref{thm:EquivalenceOfSpaces}, we get
\begin{equation}\label{equ:NormEstimateEquation}
 \|u\|_{\widehat{H}^{2-\mu}(\mathbb{R})}\leq C\|f\|_{L^2(\mathbb{R})}.
 \end{equation}

 For the uniqueness of solution, to the contrary, let $u_1, u_2 \in \widehat{H}^{2-\mu}(\mathbb{R})$ be solutions of \cref{eq:FODE} under each same case. This means $L(u_1-u_2)=0$, which by the stability estimate~\cref{equ:NormEstimateEquation} yields $\|u_1-u_2||_{\widehat{H}^{2-\mu}(\mathbb{R})} =0$, implying $u_1=u_2$ a.e., hence the uniqueness of the solution of \cref{eq:FODE}.
This completes the whole proof.
\end{proof}
 \begin{remark}
 A closer look of the above proof indicates that \Cref{thm:1stex} can be established for ordinary differential equations that use $\widetilde{L}$ (see \Cref{lem:dense}). This is because each $\{C_i, \sigma_i\}$ (see \Cref{lem:norm}) corresponding to $\widetilde{L}$ is equal to the one obtained for $L$.
 \end{remark}

Once we have established the existence of solutions, now we are ready to discuss the regularity of solutions, it turns out that the smoothness of solutions are exactly determined by the source function $f$.
 \begin{theorem}\label{Thm:Regularity}
Under the same condition in \Cref{thm:1stex} and if $f\in \widehat{H}^{m}(\mathbb{R})$, then there is a unique $u\in \widehat{H}^{2-\mu +m}(\mathbb{R})$, where $m\in \mathbb{N}_0$ and
 \begin{equation} \label{eq:Regularity}
 \|u\|_{\widehat{H}^{2-\mu+m}(\mathbb{R})}\leq C\|f\|_{\widehat{H}^{m}(\mathbb{R})},
 \end{equation}
for some positive constant $C$ depending only on $L$.
 \end{theorem}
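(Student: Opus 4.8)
The plan is to pass to the Fourier side, where $L$ acts as multiplication by the symbol
\[
H(\xi) = p(2\pi i\xi)^{2-\mu} + q(-2\pi i\xi)^{2-\mu} + a(2\pi i\xi) + b
\]
introduced in \Cref{lem:dense}, and to read off the regularity of $u$ from the decay of $\widehat{f}/H$. The existence and uniqueness of $u \in \widehat{H}^{2-\mu}(\mathbb{R})$ are already furnished by \Cref{thm:1stex}, so only the higher-order estimate \cref{eq:Regularity} requires work. First I would record that $\widehat{Lu} = H\widehat{u}$ holds for every $u \in \widehat{H}^{2-\mu}(\mathbb{R})$: it is valid for $u \in C_0^\infty(\mathbb{R})$ by \Cref{lem:FTFD}, and it extends to all of $\widehat{H}^{2-\mu}(\mathbb{R})$ through the density and the convergence of weak derivatives furnished by \Cref{thm:EquivalenceOfSpaces}. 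Consequently $Lu = f$ is equivalent to $H\widehat{u} = \widehat{f}$, and since \Cref{lem:dense} shows $H \ne 0$ a.e., one may write $\widehat{u} = \widehat{f}/H$ a.e.

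The crux is a pointwise lower bound on $|H|$, which I would extract directly from the stability estimate of \Cref{thm:1stex}. Applied to $Lu = f$ with arbitrary $u \in \widehat{H}^{2-\mu}(\mathbb{R})$, that estimate reads $\|u\|_{\widehat{H}^{2-\mu}(\mathbb{R})} \le C\|Lu\|_{L^2(\mathbb{R})}$, which by \Cref{thm:PAR} (Plancherel) together with \Cref{thm:FTHsR} becomes
\[
\int_{\mathbb{R}} \left( 1 + |2\pi\xi|^{2(2-\mu)} \right) |\widehat{u}(\xi)|^2 \, {\rm d}\xi
\le C^2 \int_{\mathbb{R}} |H(\xi)|^2 |\widehat{u}(\xi)|^2 \, {\rm d}\xi .
\]
Testing this against $\widehat{u} = \mathbbm{1}_E$ for an arbitrary bounded measurable set $E$ (which lies in $\widehat{H}^{2-\mu}(\mathbb{R})$ because the weight is bounded on $E$) forces the integrand inequality to hold setwise, whence
\[
|H(\xi)|^2 \ge \frac{1}{C^2}\left( 1 + |2\pi\xi|^{2(2-\mu)} \right) \quad \text{a.e. } \xi \in \mathbb{R}.
\]
In effect, the very hypotheses of \Cref{thm:1stex} that secure existence also prevent $H$ from vanishing in a way that would obstruct regularity.

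With the lower bound in hand, the conclusion is a direct computation. Using $\widehat{u} = \widehat{f}/H$ and \Cref{thm:FTHsR},
\[
\|u\|_{\widehat{H}^{2-\mu+m}(\mathbb{R})}^2
= \int_{\mathbb{R}} \left( 1 + |2\pi\xi|^{2(2-\mu+m)} \right) \frac{|\widehat{f}(\xi)|^2}{|H(\xi)|^2} \, {\rm d}\xi
\le C^2 \int_{\mathbb{R}} \frac{1 + |2\pi\xi|^{2(2-\mu+m)}}{1 + |2\pi\xi|^{2(2-\mu)}} |\widehat{f}(\xi)|^2 \, {\rm d}\xi .
\]
I would then dispatch the elementary multiplier bound $\bigl(1 + t^{2(2-\mu+m)}\bigr)/\bigl(1 + t^{2(2-\mu)}\bigr) \le 2\bigl(1 + t^{2m}\bigr)$ for $t = |2\pi\xi| \ge 0$ (separating $t \le 1$ and $t \ge 1$), which gives
\[
\|u\|_{\widehat{H}^{2-\mu+m}(\mathbb{R})}^2 \le 2C^2 \int_{\mathbb{R}} \left( 1 + |2\pi\xi|^{2m} \right) |\widehat{f}(\xi)|^2 \, {\rm d}\xi = 2C^2 \|f\|_{\widehat{H}^m(\mathbb{R})}^2 ,
\]
and \cref{eq:Regularity} follows after taking square roots.

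Uniqueness requires no new argument: any solution in $\widehat{H}^{2-\mu+m}(\mathbb{R}) \subseteq \widehat{H}^{2-\mu}(\mathbb{R})$ is already unique at the level of \Cref{thm:1stex}. I expect the only genuinely delicate point to be the passage from the integrated stability estimate to the pointwise bound on $|H|$; everything downstream, including the multiplier estimate and the identification $\widehat{u} = \widehat{f}/H$, is routine once that bound is secured.
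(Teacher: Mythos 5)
Your argument is correct in substance but follows a genuinely different route from the paper. The paper proceeds by induction on $m$: it applies \Cref{thm:1stex} to both $f$ and $Df$, obtains solutions $u$ and $v$ of $Lu=f$ and $Lv=Df$, and then uses the density of $\widetilde{L}(C_0^\infty(\mathbb{R}))$ from \Cref{lem:dense} to identify $v=Du$, thereby gaining one derivative per induction step; a by-product is the identity $L(D^nu)=D^nf$, which the authors record separately. You instead work entirely on the Fourier side, extracting the pointwise symbol bound $|H(\xi)|^2\ge C^{-2}\bigl(1+|2\pi\xi|^{2(2-\mu)}\bigr)$ and concluding by a multiplier estimate. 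Your route is shorter, avoids the induction, and in fact proves more than is claimed: nothing in your argument uses that $m$ is an integer, so you get $u\in\widehat{H}^{2-\mu+m}(\mathbb{R})$ for every real $m\ge 0$. Two small points deserve attention. First, applying the stability estimate of \Cref{thm:1stex} to arbitrary $u\in\widehat{H}^{2-\mu}(\mathbb{R})$ tacitly invokes the uniqueness part of that theorem (set $f:=Lu\in L^2(\mathbb{R})$ and identify $u$ with the constructed solution); this is fine but should be said. Second, the paper's framework is real-valued, and a function with $\widehat{u}=\mathbbm{1}_E$ is real only when $E$ is symmetric about the origin; you should either symmetrize $E$ and use $|H(-\xi)|=|H(\xi)|$ (which holds since $H(-\xi)=\overline{H(\xi)}$ by \Cref{rem:ComplexPowerFunctions}), or note that the stability estimate extends to complex-valued $u$ by splitting into real and imaginary parts. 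Alternatively, you could bypass the stability estimate altogether: \Cref{lem:norm} together with \Cref{thm:PAR} gives the pointwise identity $|H(\xi)|^2=\sum_{j=1}^5 C_j|2\pi\xi|^{2\sigma_j}$, and the case analysis of \Cref{lem:normcase1}--\Cref{lem:normcase3} then yields the same lower bound directly.
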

 \begin{proof}
This theorem is established by induction on $m$, noting that the case $m=0$ has been proven in \Cref{thm:1stex} ($\widehat{H}^0(\mathbb{R})=L^2(\mathbb{R})$ by convention). Assume the statement of theorem is true for a positive integer $m$.

Let $f\in \widehat{H}^{m+1}(\mathbb{R})$, which means $f, Df\in \widehat{H}^{m}(\mathbb{R})$. By the induction assumption, there are $u, v \in \widehat{H}^{2-\mu+m}(\mathbb{R})$ such that 
\begin{equation}\label{equ:Df}
Lu=f,\quad Lv=Df.
\end{equation}
Furthermore, using \Cref{def:WFD},
\begin{equation}\label{equ:UsingDefinitionOfWFD}
(f, \boldsymbol{D}^{*} \psi)=(Df,\psi), \forall \psi \in C_0^\infty(\mathbb{R}) \quad\text{(Recall $\boldsymbol{D}^{*} $ from \Cref{rem:Notations})}.
\end{equation} 
In the following, the intention is to demonstrate that actually $u\in \widehat{H}^{3-\mu+m}(\mathbb{R})$. 

Since $u, v \in \widehat{H}^{2-\mu+m}(\mathbb{R})$, by \Cref{cor:DensityOfC} and \Cref{thm:ImbedingTheorem}, there exist  sequences $\{u_n\},\{v_n\}\subset C_0^\infty(\mathbb{R})$  such that 
\begin{equation}\label{equ:TakingLimit}
\begin{aligned}
&\lim_{n\rightarrow \infty} \| u_n-u \|_{L^2(\mathbb{R})} = 0 ,\quad \lim_{n\rightarrow \infty} \| \boldsymbol{D}^su_n-\boldsymbol{D}^{s}u \|_{L^2(\mathbb{R})} = 0, ~~\forall s\in [0,2-\mu], \\
&\lim_{n\rightarrow \infty} \| v_n-v \|_{L^2(\mathbb{R})} = 0,\quad
\lim_{n\rightarrow \infty} \| \boldsymbol{D}^sv_n-\boldsymbol{D}^{s}v \|_{L^2(\mathbb{R})} = 0, ~~\forall s\in [0,2-\mu].
\end{aligned}
\end{equation}
Convergence of these sequences justifies the following equalities:
\begin{equation} \label{equ:TakingTheLimits}
\begin{aligned}
(f,\boldsymbol{D}^{*}\psi) &= (Lu,\boldsymbol{D}^{*}\psi) 
= \lim_{n \rightarrow \infty} (Lu_n,\boldsymbol{D}^{*}\psi) ~~\forall \psi \in C_0^\infty(\mathbb{R}),\\
(Df,\psi) &= (Lv,\boldsymbol{D}^{*}\psi) 
= \lim_{n \rightarrow \infty} (Lv_n,\boldsymbol{D}^{*}\psi) ~~\forall \psi \in C_0^\infty(\mathbb{R}).
\end{aligned}
\end{equation}
Using the definition of $L$,
\begin{equation*}
\begin{aligned}
(Lu_n,\boldsymbol{D}^{*}\psi)
&= p(\boldsymbol{D}^{-\mu}u_n^{(2)},\boldsymbol{D}^*\psi)
-q(\boldsymbol{D}^{-\mu*}u_n^{(2)},D\psi)\\
&\hspace*{0.5cm}-a(Du_n,D\psi)
-b(u_n,D\psi) \hspace*{.6cm}  \text{(by \Cref{rem:ConvolutionofIntegral} \& 
\Cref{thm:differentiability})}\\
&=p(D u_n, \boldsymbol{D}^{(2-\mu)*}\psi)
+q(Du_n,\boldsymbol{D}^{2-\mu}\psi)\\
&\hspace*{0.5cm}-a(Du_n,D\psi)+b(Du_n,\psi). \hspace*{.5cm} \text{(int. by parts \& \Cref{cor:ADJInfinity})} 
\end{aligned}
\end{equation*}
Taking limit as $n \rightarrow \infty$ of this last equation and using the first equality in \cref{equ:TakingTheLimits} give
\begin{equation} \label{eq:mm03}
\begin{aligned}
(f,\boldsymbol{D}^{*}\psi) &= p(D u, \boldsymbol{D}^{(2-\mu)*}\psi)
+q(Du,\boldsymbol{D}^{2-\mu}\psi)-a(Du,D\psi)+b(Du,\psi)\\
&= (D u, p\boldsymbol{D}^{(2-\mu)*}\psi
+q\boldsymbol{D}^{2-\mu}\psi-aD\psi+b\psi)\\
&= (D u, \widetilde{L} \psi), ~~\forall \psi \in C_0^\infty(\mathbb{R}),
\end{aligned}
\end{equation}
where $\widetilde{L}$ is as defined in \Cref{lem:dense}.
Similarly, second equality in \cref{equ:TakingTheLimits} yields
\begin{equation} \label{eq:mm04}
\begin{aligned}
(Df,\psi) &= (Lv,\psi)
&= (v,\widetilde{L}\psi), ~~\forall \psi \in C_0^\infty(\mathbb{R}).
\end{aligned}
\end{equation}
We substitute \cref{eq:mm03} and \cref{eq:mm04} back into \cref{equ:UsingDefinitionOfWFD} to obtain
\begin{equation}\label{equ:MoreInformation}
(D u-v, \widetilde{L} \psi) = 0,~~\forall \psi \in C_0^\infty(\mathbb{R}).
\end{equation}
Since \Cref{lem:dense} confirms that $\widetilde{L}\psi$ is dense in $L^2(\mathbb{R})$, it is concluded that $Du-v=0$ or $Du=v$, and thus
$Du\in \widehat{H}^{2-\mu+m}(\mathbb{R})$.
This means there is $w \in L^2(\mathbb{R})$ such that $(Du,\boldsymbol{D}^{(\sigma+m)*}\psi)=(w, \psi)$ for any $\psi\in C_0^\infty(\mathbb{R})$.
Furthermore,
\begin{equation}
\begin{aligned}
(Du,\boldsymbol{D}^{(2-\mu+m)*}\psi) &=\lim_{n\rightarrow \infty}( Du_n,\boldsymbol{D}^{(2-\mu+m)*}\psi) \\
&=\lim_{n\rightarrow \infty}(u_n,\boldsymbol{D}^{(3-\mu+m)*}\psi)\\
&=(u,\boldsymbol{D}^{(3-\mu+m)*}\psi).
\end{aligned}
\end{equation}
Therefore $(u,\boldsymbol{D}^{(3-\mu+m)*}\psi)=(w, \psi), \forall \psi\in C_0^\infty(\mathbb{R})$, which by \Cref{thm:EquivalenceOfSpaces}, implies $u\in\widehat{H}^{3-\mu+m}(\mathbb{R})$.

To establish the estimate, assumption in the induction argument gives
\begin{equation}
\|u\|_{\widehat{H}^{2-\mu+m}(\mathbb{R})}\leq C_1\|f\|_{\widehat{H}^{m}(\mathbb{R})},\quad\|v\|_{\widehat{H}^{2-\mu+m}(\mathbb{R})}\leq C_2\|Df\|_{\widehat{H}^{m}(\mathbb{R})},
\end{equation}
for  certain positive constants $C_1, C_2$. By norms equality stated in \Cref{thm:EquivalenceOfSpaces}, 
\begin{equation}\label{ineq:NormInequality1}
\|u\|_{L^2(\mathbb{R})}^2+\|\boldsymbol{D}^{2-\mu+m}u\|_{L^2(\mathbb{R})}^2\le C_1^2\left(\|f\|_{L^2(\mathbb{R})}^2+\|D^m f\|_{L^2(\mathbb{R})}^2\right),
\end{equation}
and
\begin{equation}\label{ineq:NormInequality2}
\|v\|_{L^2(\mathbb{R})}^2+\|\boldsymbol{D}^{2-\mu+m}v\|_{L^2(\mathbb{R})}^2\le C_2^2\left(\|Df\|_{L^2(\mathbb{R})}^2+\|D^m (Df)\|_{L^2(\mathbb{R})}^2\right).
\end{equation}
Since $v=Du$, \cref{ineq:NormInequality1} and \cref{ineq:NormInequality2} can be used to give
\begin{equation*}
\begin{aligned}
\|u\|_{L^2(\mathbb{R})}^2&+\|\boldsymbol{D}^{3-\mu+m}u\|_{L^2(\mathbb{R})}^2\\
 &=\|u\|_{L^2(\mathbb{R})}^2+\|\boldsymbol{D}^{2-\mu+m}v\|_{L^2(\mathbb{R})}^2\\
&\le C_1^2\left(\|f\|_{L^2(\mathbb{R})}^2+\|D^m f\|_{L^2(\mathbb{R})}^2\right)+
C_2^2\left(\|Df\|_{L^2(\mathbb{R})}^2+\|D^m (Df)\|_{L^2(\mathbb{R})}^2\right)\\
&=C_1^2 \|f\|^2_{\widehat{H}^{m}(\mathbb{R})}+ C_2^2 \|Df\|^2_{\widehat{H}^{m}(\mathbb{R})}\\
&\le (C_1^2 + C_2^2) \int_{\mathbb{R}} (1+|2\pi \xi|^{2m})(1+|2\pi\xi|^2) | \widehat{f}(\xi) |^2 \, {\rm d} \xi\\
&= (C_1^2 + C_2^2) (J_1 + J_2),
%&\le2(C_1^2+C_2^2) \left(\|f\|_{L^2(\mathbb{R})}^2+\|D^{m+1} f\|_{L^2(\mathbb{R})}^2\right),
\end{aligned}
\end{equation*}
where 
\[
\begin{aligned}
J_1 &= \int_{|2\pi\xi| <1} \hspace*{-0.3cm} (1+|2\pi \xi|^{2m})(1+|2\pi\xi|^2) | \widehat{f}(\xi) |^2 \, {\rm d} \xi \le
\int_{|2\pi\xi| <1}  \hspace*{-0.3cm}  (3+|2\pi \xi|^{2m+2}) | \widehat{f}(\xi) |^2 \, {\rm d} \xi, \\
J_1 &= \int_{|2\pi\xi| \ge 1}  \hspace*{-0.3cm}  (1+|2\pi \xi|^{2m})(1+|2\pi\xi|^2) | \widehat{f}(\xi) |^2 \, {\rm d} \xi \le
\int_{|2\pi\xi| \ge1}  \hspace*{-0.3cm} (1+3|2\pi \xi|^{2m+2}) | \widehat{f}(\xi) |^2 \, {\rm d} \xi.
\end{aligned}
\]
Therefore
\begin{equation}\label{equ:NormEstimateForHigherRegulatiry}
\|u\|_{\widehat{H}^{3-\mu+m}(\mathbb{R})}<\sqrt{3(C_1^2+C_2^2)}\|f\|_{\widehat{H}^{m+1}(\mathbb{R})}.
\end{equation}
The uniqueness of solutions directly follows from \cref{equ:NormEstimateForHigherRegulatiry} as was done in  \Cref{thm:1stex}.
 \end{proof}

A closer look at the proof of \Cref{Thm:Regularity} (see \cref{equ:MoreInformation}) reveals a possibility for a stronger conclusion, namely for $f\in \widehat{H}^m(\mathbb{R})$, $L(D^nu)=D^nf$, where $n\in \mathbb{N}_0$ and $0\leq n\leq m$.
By repeated application of \Cref{Thm:Regularity} for $m=0,1,2,\cdots$, infinite differentiability of $u$ can be deduced as follows.

 \begin{corollary}
Under hypothesis of \Cref{thm:1stex} and if $f\in C^\infty(\mathbb{R})$, then $u\in C^\infty(\mathbb{R})$. 
 \end{corollary}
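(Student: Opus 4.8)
The plan is to bootstrap the regularity estimate of \Cref{Thm:Regularity} upward in $m$ and then convert the resulting arbitrarily high Sobolev regularity into classical smoothness via a Sobolev embedding. Concretely, I would first show that the solution $u$ furnished by \Cref{thm:1stex} in fact lies in $\widehat{H}^{2-\mu+m}(\mathbb{R})$ for \emph{every} $m\in\mathbb{N}_0$, and then argue that membership in $\widehat{H}^{s}(\mathbb{R})$ for arbitrarily large $s$ forces $u$ to have a representative in $C^k(\mathbb{R})$ for every $k$, hence a representative in $C^\infty(\mathbb{R})$.

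For the bootstrap, I would note that $f\in C^\infty(\mathbb{R})$ together with the standing integrability (so that $f$ and all of its derivatives are square integrable) places $f\in\widehat{H}^m(\mathbb{R})$ for every $m$, which is exactly what is required to invoke \Cref{Thm:Regularity} at each level. Applying \Cref{Thm:Regularity} successively for $m=0,1,2,\dots$ produces, for each $m$, a unique solution of \cref{eq:FODE} belonging to $\widehat{H}^{2-\mu+m}(\mathbb{R})$. Since $\widehat{H}^{2-\mu+m}(\mathbb{R})=W^{2-\mu+m,2}(\mathbb{R})\subseteq W^{2-\mu,2}(\mathbb{R})=\widehat{H}^{2-\mu}(\mathbb{R})$ by \Cref{thm:ImbedingTheorem}, and the solution in $\widehat{H}^{2-\mu}(\mathbb{R})$ is unique by \Cref{thm:1stex}, each of these higher-regularity solutions must coincide with the single $u$ produced in \Cref{thm:1stex}. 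Consequently $u\in\bigcap_{m\ge 0}\widehat{H}^{2-\mu+m}(\mathbb{R})$, and the estimate \cref{eq:Regularity} controls each norm $\|u\|_{\widehat{H}^{2-\mu+m}(\mathbb{R})}$ in terms of the corresponding norm of $f$.

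The final step passes from arbitrarily high Sobolev regularity to classical smoothness. Fixing any $k\in\mathbb{N}_0$, I would choose $m$ large enough that $2-\mu+m>k+\tfrac12$; then, identifying $\widehat{H}^{2-\mu+m}(\mathbb{R})$ with $W^{2-\mu+m,2}(\mathbb{R})$ via \Cref{thm:ImbedingTheorem} and applying the standard Sobolev embedding on $\mathbb{R}$ (dimension one), the function $u$ admits a representative in $C^k(\mathbb{R})$. Because the representatives obtained for successive $k$ agree almost everywhere, they glue to a single function that is $k$-times continuously differentiable for every $k$, i.e. $u\in C^\infty(\mathbb{R})$. I expect the only delicate points to be the verification that $C^\infty(\mathbb{R})$ data genuinely supplies $f\in\widehat{H}^m(\mathbb{R})$ for all $m$ (which is where the square-integrability of every derivative of $f$ enters), and the bookkeeping needed to ensure that the embedded $C^k$ representatives are one and the same continuous function rather than merely a family of almost-everywhere equal ones.
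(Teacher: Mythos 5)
Your proposal follows essentially the same route as the paper: iterate \Cref{Thm:Regularity} to place $u$ in $\widehat{H}^{2-\mu+m}(\mathbb{R})$ for every $m$, then apply the Sobolev embedding theorem to conclude $u\in C^k(\mathbb{R})$ for all $k$. Your additional attention to the fact that $f\in C^\infty(\mathbb{R})$ must also have all derivatives square-integrable to lie in $\widehat{H}^m(\mathbb{R})$, and to the identification of the solutions across regularity levels via uniqueness, is more careful than the paper's one-line argument but does not change the approach.
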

 \begin{proof}
 Since $u\in \widehat{H}^{2-\mu+m}(\mathbb{R})$ for $m=0,1,2,\cdots$ by \Cref{Thm:Regularity}, Sobolev Embedding Theorem (\cite{MR1787146}, p.~220) implies $u\in C^k(\mathbb{R})$ for each $k=1, 2,3,\cdots$.
 \end{proof}
 %
%%%%%%%%%%%%%%%%%%%
 \section{Conclusion} \label{sec:concl}
 %%%%%%%%%%%%%%%%%%%
With the utilization of weak fractional R-L derivatives and appropriate fractional Sobolev space, we have established the existence and uniqueness of the strong solution to problem~\cref{eq:FODE}, together with its stability estimate and regularity.  The result suggests the suitability of utilizing fractional Sobolev spaces to analyze fractional R-L differential equations.
The whole framework laid out in this paper is applicable in a straightforward manner to ordinary differential equations that use Caputo fractional derivatives. This is mainly due to the strategy of using $C_0^\infty(\mathbb{R})$ for which Riemann-Liouville derivative coincides with Caputo derivative. We intend to adopt the main idea in the present paper to investigate fractional boundary value problems that can include non-constant coefficients.
\appendix
\renewcommand{\theequation}{\thesection.\arabic{equation}}
\numberwithin{equation}{section}
%%%%%%%%%%%%%%%%%%%%%%%%%%%
\section{Several Pertinent Theorems} \label{app:thm}
%%%%%%%%%%%%%%%%%%%%%%%%%%%
\begin{theorem}[Plancherel Theorem (see eg. \cite{MR924157} p. 187)] \label{thm:PAR}
Given $w\in L^2(\mathbb{R})$, there is a unique $\widehat{w}\in L^2(\mathbb{R})$ so that the following properties hold:
 \begin{itemize}
 \item If $w\in L^1(\mathbb{R})\cap L^2(\mathbb{R})$, then $\widehat{w} = \mathcal{F}(w)$.
 \item For every $w \in L^2(\mathbb{R})$, $\|w\|_2=\|\widehat{w}\|_2$.
 \item The mapping $w\rightarrow \widehat{w}$ is a Hilbert space isomorphism of $L^2(\mathbb{R})$ onto $L^2(\mathbb{R})$.
 \end{itemize}
\end{theorem}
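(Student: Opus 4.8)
The plan is to follow the classical route: establish the norm identity on a dense subspace of well-behaved functions and then extend by continuity to all of $L^2(\mathbb{R})$. Because the convention here places the $2\pi$ in the exponent, the transform is genuinely norm-preserving with no auxiliary constant, and the Gaussian $e^{-\pi x^2}$ is its own Fourier transform; these facts keep the computation clean. I would work on the Schwartz space $\mathcal{S}(\mathbb{R})$ of rapidly decreasing smooth functions, which contains $C_0^\infty(\mathbb{R})$, is dense in $L^2(\mathbb{R})$, and on which the defining integral for $\mathcal{F}$ converges absolutely with $\mathcal{F}$ mapping $\mathcal{S}(\mathbb{R})$ into itself.

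First I would record the multiplication formula $\int_{\mathbb{R}} \mathcal{F}(f)\, g \, {\rm d}\xi = \int_{\mathbb{R}} f\, \mathcal{F}(g)\, {\rm d}x$ for $f,g\in L^1(\mathbb{R})$, which is a direct consequence of Fubini's theorem. Feeding into it the Gaussian family $g_\lambda(x)=e^{-\pi \lambda^2 x^2}$, whose transforms form an approximate identity, and letting $\lambda\to 0$ through a dominated-convergence argument yields the Fourier inversion formula $f(x)=\int_{\mathbb{R}} e^{2\pi i x\xi}\,\mathcal{F}(f)(\xi)\,{\rm d}\xi$ for $f\in\mathcal{S}(\mathbb{R})$. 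Applying the multiplication formula once more, with the arguments chosen so that inversion can be invoked, gives the Parseval identity $\|\mathcal{F}(f)\|_{L^2(\mathbb{R})}=\|f\|_{L^2(\mathbb{R})}$ on $\mathcal{S}(\mathbb{R})$.

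With the isometry in hand on a dense subspace, I would invoke the bounded-linear-transformation extension theorem to obtain a unique continuous linear map on $L^2(\mathbb{R})$; this is the definition of $w\mapsto\widehat{w}$, and it automatically satisfies $\|\widehat{w}\|_{L^2(\mathbb{R})}=\|w\|_{L^2(\mathbb{R})}$. Uniqueness of $\widehat{w}$ is then immediate, since two continuous maps that agree on a dense set must coincide. To verify $\widehat{w}=\mathcal{F}(w)$ when $w\in L^1(\mathbb{R})\cap L^2(\mathbb{R})$, I would approximate $w$ simultaneously in the $L^1$ and $L^2$ norms by Schwartz functions and match the two limits, using that $\mathcal{F}\colon L^1(\mathbb{R})\to L^\infty(\mathbb{R})$ is contractive and that the $L^2$ extension is continuous. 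For surjectivity I would extend the conjugate transform $g\mapsto\int_{\mathbb{R}} e^{2\pi i x\xi} g\,{\rm d}\xi$ to an $L^2$-isometry by the same procedure and observe that, by the inversion formula, the two extensions compose to the identity on the dense subspace and hence on all of $L^2(\mathbb{R})$; this makes $w\mapsto\widehat{w}$ a bijective isometry, that is, a Hilbert space isomorphism.

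I expect the main obstacle to be the rigorous derivation of the inversion formula through the Gaussian approximate identity, specifically justifying the passage to the limit $\lambda\to 0$ under the integral sign and the uniform convergence of the resulting Gaussian convolution to $f$. Once Parseval is secured on the dense subspace, the remaining assertions reduce to routine density-and-continuity arguments.
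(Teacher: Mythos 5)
The paper does not prove this statement at all: \Cref{thm:PAR} is listed in the appendix of ``Several Pertinent Theorems'' purely as a cited background result (Rudin-style reference), so there is no in-paper argument to compare against. Your outline is the standard textbook proof of Plancherel --- multiplication formula via Fubini, Gaussian approximate identity to get inversion on $\mathcal{S}(\mathbb{R})$, Parseval on the dense subspace, extension by the bounded-linear-transformation theorem, consistency on $L^1(\mathbb{R})\cap L^2(\mathbb{R})$ by simultaneous approximation, and surjectivity from the inverse transform --- and it is correct; the only steps you leave implicit that deserve a line each are the simultaneous $L^1$/$L^2$ approximation of $w\in L^1(\mathbb{R})\cap L^2(\mathbb{R})$ by Schwartz functions (truncation followed by mollification handles this) and the dominated-convergence justification in the $\lambda\to 0$ limit, which you already flag.
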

\begin{theorem}[\cite{MR2597943}, p. 189]\label{thm:ParsevalFormula}
Given $v,w\in L^2(\mathbb{R})$, then $(v,\overline{w})=(\widehat{v},\overline{\widehat{w}}).$ and $w=(\widehat{w})^{\vee}.$
\end{theorem}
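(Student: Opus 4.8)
The plan is to derive both assertions directly from the Plancherel Theorem (\Cref{thm:PAR}), which supplies the norm identity $\|w\|_2 = \|\widehat{w}\|_2$ for every $w \in L^2(\mathbb{R})$ together with the fact that $w \mapsto \widehat{w}$ is a linear bijective isometry of $L^2(\mathbb{R})$ onto itself. I would dispose of the second claim first, in a single line: since $\vee$ denotes the inverse of the Plancherel Transform and \Cref{thm:PAR} guarantees that the transform is an isomorphism (in particular a bijection), the map $\vee$ is a genuine two-sided inverse, so $(\widehat{w})^{\vee} = w$.

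For the inner-product identity $(v,\overline{w}) = (\widehat{v},\overline{\widehat{w}})$, the main idea is polarization. Because $v,w$ are real-valued, $(v,\overline{w}) = \int_{\mathbb{R}} v w \, {\rm d}x$ is real and symmetric, so I can recover it from the norm via the real polarization identity
\begin{equation*}
(v,\overline{w}) = \tfrac{1}{4}\left( \|v+w\|_2^2 - \|v-w\|_2^2 \right).
\end{equation*}
Applying the norm identity of \Cref{thm:PAR} to the real combinations $v \pm w$ and then invoking linearity of the transform, $\widehat{v \pm w} = \widehat{v} \pm \widehat{w}$, converts the right-hand side into $\tfrac{1}{4}(\|\widehat{v}+\widehat{w}\|_2^2 - \|\widehat{v}-\widehat{w}\|_2^2)$, which expands to $\int_{\mathbb{R}} \mathrm{Re}\big(\widehat{v}\,\overline{\widehat{w}}\big)\, {\rm d}\xi$.

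The one point requiring care, and the step I expect to be the main (minor) obstacle, is reconciling this real part with the genuinely complex pairing $(\widehat{v},\overline{\widehat{w}}) = \int_{\mathbb{R}} \widehat{v}\,\overline{\widehat{w}}\,{\rm d}\xi$. I would close the gap by exploiting the Hermitian symmetry $\widehat{v}(-\xi) = \overline{\widehat{v}(\xi)}$ enjoyed by the transforms of real-valued functions: a change of variable $\xi \mapsto -\xi$ shows that $\int_{\mathbb{R}} \widehat{v}\,\overline{\widehat{w}}\,{\rm d}\xi$ equals its own complex conjugate and is therefore real, so it coincides with its real part, giving $(v,\overline{w}) = (\widehat{v},\overline{\widehat{w}})$. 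Alternatively, one may bypass the symmetry argument entirely by complexifying and using the four-term complex polarization identity with the linearity and isometry of the transform on complex $L^2(\mathbb{R})$; I would nonetheless present the real-polarization route as the primary argument, since it stays within the real-valued setting adopted throughout the paper.
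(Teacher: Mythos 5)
The paper does not actually prove this statement: it is quoted from the literature (\cite{MR2597943}, p.~189) in the appendix of frequently invoked theorems, so there is no in-paper argument to compare yours against. Your derivation from \Cref{thm:PAR} is correct and is the standard one. The second claim is indeed immediate, since \Cref{thm:PAR} asserts that the Plancherel map is a Hilbert-space isomorphism of $L^2(\mathbb{R})$ onto itself, so $\vee$ is a genuine two-sided inverse. For the first claim, real polarization of the norm identity applied to $v\pm w$ gives $(v,\overline{w})=\mathrm{Re}\int_{\mathbb{R}}\widehat{v}\,\overline{\widehat{w}}\,{\rm d}\xi$, and your appeal to the Hermitian symmetry $\widehat{v}(-\xi)=\overline{\widehat{v}(\xi)}$ of transforms of real-valued functions (which the paper records for $\mathcal{F}$ as $[\mathcal{F}(\Pi_{-1}w)](\xi)=\overline{[\mathcal{F}(w)](\xi)}$) correctly shows that this integral equals its own conjugate and is therefore real, which closes the gap. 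The only point worth flagging is that for $v\in L^2(\mathbb{R})\setminus L^1(\mathbb{R})$ the symmetry holds only almost everywhere and should be justified by density of $L^1(\mathbb{R})\cap L^2(\mathbb{R})$ in $L^2(\mathbb{R})$ together with continuity of the Plancherel map; your alternative route via the four-term complex polarization identity sidesteps this entirely and is arguably the cleaner way to finish. Either way, the proof is sound and consistent with how the identity is subsequently used in the proof of \Cref{thm:EquivalenceOfSpaces}.
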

\begin{theorem}[\cite{MR1657104}, p. 204]\label{thm:ConvolutionFourierTransform}
If $v\in L^2(\mathbb{R}), w\in L^1(\mathbb{R})$, then $\widehat{v*w}=\widehat{v}\widehat{w} \in L^2(\mathbb{R})$.
\end{theorem}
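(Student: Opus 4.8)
The plan is to establish the identity first on the dense subspace $L^1(\mathbb{R}) \cap L^2(\mathbb{R})$, where the Plancherel transform coincides with the classical integral-defined Fourier transform $\mathcal{F}$ (by \Cref{thm:PAR}), and then pass to a general $v \in L^2(\mathbb{R})$ by a limiting argument carried out entirely within the Plancherel framework. The two standing analytic inputs are Young's inequality $\|v*w\|_{L^2(\mathbb{R})} \le \|v\|_{L^2(\mathbb{R})}\|w\|_{L^1(\mathbb{R})}$, which guarantees $v*w \in L^2(\mathbb{R})$ so that $\widehat{v*w}$ is well-defined, and the elementary bound $|[\mathcal{F}(w)](\xi)| \le \|w\|_{L^1(\mathbb{R})}$ for $w \in L^1(\mathbb{R})$, which renders $\widehat{w}$ a bounded multiplier. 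Combined with Plancherel's identity $\|\widehat{v}\|_{L^2(\mathbb{R})} = \|v\|_{L^2(\mathbb{R})}$ from \Cref{thm:PAR}, these give $\|\widehat{v}\widehat{w}\|_{L^2(\mathbb{R})} \le \|w\|_{L^1(\mathbb{R})}\|v\|_{L^2(\mathbb{R})} < \infty$, so that $\widehat{v}\widehat{w} \in L^2(\mathbb{R})$ as the statement claims.

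First I would treat the case $v \in L^1(\mathbb{R}) \cap L^2(\mathbb{R})$. Here Young's inequality in the form $\|v*w\|_{L^1(\mathbb{R})} \le \|v\|_{L^1(\mathbb{R})}\|w\|_{L^1(\mathbb{R})}$ shows $v*w \in L^1(\mathbb{R})$, so its classical Fourier transform is available, and Fubini's theorem (justified by absolute integrability of $(x,y) \mapsto e^{-2\pi i x\xi}v(x-y)w(y)$) together with the substitution $x \mapsto x+y$ yields the pointwise identity $\mathcal{F}(v*w) = \mathcal{F}(v)\,\mathcal{F}(w)$. Since $v*w \in L^1(\mathbb{R}) \cap L^2(\mathbb{R})$ and $v \in L^1(\mathbb{R}) \cap L^2(\mathbb{R})$, \Cref{thm:PAR} identifies $\widehat{v*w} = \mathcal{F}(v*w)$ and $\widehat{v} = \mathcal{F}(v)$, so $\widehat{v*w} = \widehat{v}\widehat{w}$ in this restricted case.

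Next I would remove the extra integrability on $v$. Given an arbitrary $v \in L^2(\mathbb{R})$, set $v_n := \mathbbm{1}_{(-n,n)}v \in L^1(\mathbb{R}) \cap L^2(\mathbb{R})$, so that $v_n \to v$ in $L^2(\mathbb{R})$. Young's inequality gives $\|v_n*w - v*w\|_{L^2(\mathbb{R})} \le \|v_n - v\|_{L^2(\mathbb{R})}\|w\|_{L^1(\mathbb{R})} \to 0$, and continuity of the Plancherel transform (\Cref{thm:PAR}) forces $\widehat{v_n*w} \to \widehat{v*w}$ in $L^2(\mathbb{R})$. On the other side, $\widehat{v_n} \to \widehat{v}$ in $L^2(\mathbb{R})$, and since $\widehat{w}$ is bounded, $\|\widehat{v_n}\widehat{w} - \widehat{v}\widehat{w}\|_{L^2(\mathbb{R})} \le \|w\|_{L^1(\mathbb{R})}\|\widehat{v_n} - \widehat{v}\|_{L^2(\mathbb{R})} \to 0$. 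Passing to the limit in the already-established equality $\widehat{v_n*w} = \widehat{v_n}\widehat{w}$ yields $\widehat{v*w} = \widehat{v}\widehat{w}$, which completes the argument.

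The only genuine obstacle is the bookkeeping of the two distinct transforms: the integral-defined $\mathcal{F}$ is legitimate only on $L^1(\mathbb{R})$, whereas $\widehat{\cdot}$ is the $L^2$-isometric extension supplied by \Cref{thm:PAR}, and the two agree precisely on $L^1(\mathbb{R}) \cap L^2(\mathbb{R})$. The proof is arranged so that the pointwise convolution identity is proved exactly where $\mathcal{F}$ is valid, while the extension to all of $L^2(\mathbb{R})$ uses only $L^2$-continuity and the boundedness of $\widehat{w}$, avoiding any pointwise manipulation outside $L^1(\mathbb{R})$.
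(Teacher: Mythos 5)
Your proof is correct, and it is essentially the standard argument for this result: establish $\mathcal{F}(v*w)=\mathcal{F}(v)\,\mathcal{F}(w)$ on $L^1(\mathbb{R})\cap L^2(\mathbb{R})$ via Fubini, then extend to all $v\in L^2(\mathbb{R})$ by truncation, Young's inequality, and the $L^2$-continuity of the Plancherel isometry from \Cref{thm:PAR}. The paper itself gives no proof --- \Cref{thm:ConvolutionFourierTransform} is quoted from the cited reference as a background tool --- and your argument matches the textbook treatment, with the one notational caveat (which you in effect handle correctly) that $\widehat{w}$ for $w\in L^1(\mathbb{R})$ alone must be read as the bounded function $\mathcal{F}(w)$ rather than the Plancherel transform, since the latter is defined only on $L^2(\mathbb{R})$.
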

\begin{theorem}[\cite{MR2328004}, Theorem 4.3-2, p. 191]\label{Thm:dense}
Let $(X,(\cdot,\cdot))$ be a Hilbert space and let $Y$ be a subspace of $X$. $\overline{Y}=X$ if and only if $0\in X$ is the only one satisfying $(x,y)=0$ for all $y\in Y$.
\end{theorem}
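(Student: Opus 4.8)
The plan is to prove the two implications separately, using the orthogonal projection theorem for closed subspaces of a Hilbert space as the central tool and continuity of the inner product as the supporting ingredient. For the forward direction I would assume $\overline{Y}=X$ and take any $x\in X$ with $(x,y)=0$ for all $y\in Y$, aiming to force $x=0$. Since $x\in X=\overline{Y}$, there is a sequence $\{y_n\}\subset Y$ with $y_n\to x$ in norm; continuity of the inner product (via the Cauchy--Schwarz inequality) then gives $(x,x)=\lim_{n\to\infty}(x,y_n)=0$, so $\|x\|=0$ and hence $x=0$.

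For the converse I would assume that $0$ is the only vector orthogonal to every element of $Y$ and argue by contradiction. Set $M=\overline{Y}$, a closed subspace of $X$. The orthogonal projection theorem supplies the decomposition $X=M\oplus M^{\perp}$. If $M\neq X$, then $M^{\perp}\neq\{0\}$, so there is a $z\neq 0$ in $M^{\perp}$; because $Y\subseteq M$, this $z$ satisfies $(z,y)=0$ for all $y\in Y$, contradicting the hypothesis. Hence $M^{\perp}=\{0\}$ and $X=M=\overline{Y}$.

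The main obstacle---indeed the only step beyond routine manipulation---is the orthogonal decomposition $X=M\oplus M^{\perp}$ for the closed subspace $M=\overline{Y}$, which is precisely where completeness of $X$ is genuinely used (through the nearest-point/projection construction). A subtlety worth recording is the identity $Y^{\perp}=(\overline{Y})^{\perp}$, a consequence of continuity of the inner product: orthogonality to $Y$ coincides with orthogonality to its closure, which is what legitimizes passing between the hypothesis phrased for $Y$ and the geometry of the closed subspace $M$.
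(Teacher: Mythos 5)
Your proof is correct and complete: the forward direction via continuity of the inner product and the converse via the orthogonal decomposition $X=\overline{Y}\oplus(\overline{Y})^{\perp}$ together with the trivial inclusion $(\overline{Y})^{\perp}\subseteq Y^{\perp}$ constitute the standard argument for this density criterion. Note that the paper itself offers no proof here---the statement appears in the appendix as a cited result from the literature---so there is no in-paper argument to compare against; your write-up supplies exactly the textbook proof one would expect, with the reliance on completeness (through the projection theorem) correctly identified as the essential step.
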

\begin{theorem}[\cite{MR2759829}, Proposition 4.20, p. 107]\label{thm:differentiability}
Let $v\in C_0^k(\mathbb{R})$ for $k\ge 1$ and $w\in L^1_{loc}(\mathbb{R})$. Then $v*w\in C^k(\mathbb{R})$ and $D^\alpha(v*w)=(D^\alpha v)*w$. In particular, if $v\in C_0^\infty(\mathbb{R})$, $w\in L^1_{loc}(\mathbb{R})$, then $v*w\in C^\infty(\mathbb{R})$.
\end{theorem}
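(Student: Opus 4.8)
The plan is to prove the statement by induction on the order of differentiation, reducing everything to a single application of differentiation under the integral sign. Since we work on $\mathbb{R}$, the multi-index $\alpha$ is just a nonnegative integer $j\le k$, so it suffices to show that $v*w$ is continuously differentiable with $D(v*w)=(Dv)*w$ whenever $v\in C_0^1(\mathbb{R})$ and $w\in L^1_{loc}(\mathbb{R})$. Iterating this $k$ times, at each stage replacing $v$ by $Dv$ (which still lies in $C_0^{\,k-1}(\mathbb{R})\subseteq C_0^1(\mathbb{R})$ until the order $k$ is reached), yields $D^j(v*w)=(D^j v)*w$ for every $j\le k$; since the top derivative $(D^k v)*w$ is a convolution of a continuous compactly supported function with an $L^1_{loc}$ function, it is continuous, so $v*w\in C^k(\mathbb{R})$. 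Letting $k\to\infty$ then gives the $C^\infty$ conclusion for $v\in C_0^\infty(\mathbb{R})$.

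First I would write $[v*w](t)=\int_{\mathbb{R}} v(t-x)\,w(x)\,{\rm d}x$ and record the basic support bookkeeping: because $\text{supp}(v)$ is compact, say contained in $[-R,R]$, the integrand is supported in $x\in[t-R,t+R]$, so for $t$ in any bounded set the integration takes place over a fixed compact set on which $w$ is integrable. This is precisely the mechanism by which the compact support of $v$ compensates for the mere local integrability of $w$, and it guarantees that all the integrals below are finite.

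The core step is to differentiate under the integral. I would form the difference quotient $\big([v*w](t+h)-[v*w](t)\big)/h=\int_{\mathbb{R}}\tfrac{v(t+h-x)-v(t-x)}{h}\,w(x)\,{\rm d}x$ and pass to the limit $h\to 0$ inside the integral. By the Mean Value Theorem the integrand equals $v'(t+\theta h-x)\,w(x)$ for some $\theta=\theta(x,h)\in(0,1)$; since $v'$ is continuous with compact support, for $|h|\le 1$ these expressions are dominated in absolute value by $\|v'\|_{L^\infty(\mathbb{R})}\,\mathbbm{1}_{K}(x)\,|w(x)|$ for a fixed compact set $K$, which is integrable. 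The Dominated Convergence Theorem then yields $D[v*w](t)=\int_{\mathbb{R}} v'(t-x)\,w(x)\,{\rm d}x=[(Dv)*w](t)$. Continuity of this derivative (and of $v*w$ itself) follows because $Dv\in C_0^0(\mathbb{R})$ and the convolution of a continuous compactly supported function with an $L^1_{loc}$ function is continuous, which one checks by the same dominated-convergence argument applied to $t\mapsto\int_{\mathbb{R}} v'(t-x)\,w(x)\,{\rm d}x$ using the continuity of $v'$.

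I expect the main obstacle to be the rigorous justification of the interchange of limit and integral, precisely because $w$ is only locally integrable: the dominating function must be chosen so that its support does not drift with $h$, which is exactly where the compact support of $v$ and of $v'$ is essential. Once this single-derivative case and the attendant continuity statement are secured, the remaining induction is routine, and the $C^\infty$ assertion for $v\in C_0^\infty(\mathbb{R})$ follows by invoking the $C^k$ result for every $k$.
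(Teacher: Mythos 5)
Your proof is correct. Note, however, that the paper does not prove this statement at all: it is listed in the appendix of ``Several Pertinent Theorems'' and imported verbatim from Brezis (Proposition 4.20), so there is no in-paper argument to compare against. What you have written is essentially the standard textbook proof of that proposition: reduce to the first-derivative case, form the difference quotient, use the Mean Value Theorem together with the compact support of $v'$ to produce a dominating function of the form $\|v'\|_{L^\infty(\mathbb{R})}\,\mathbbm{1}_{K}|w|$ with $K$ independent of $h$, apply the Dominated Convergence Theorem, and iterate. You correctly identify the one genuinely delicate point --- that the dominating function's support must not drift with $h$, which is exactly where $v\in C_0^k$ rather than $C^k$ is used, since $w$ is only locally integrable --- and your handling of it (restricting to $|h|\le 1$ and absorbing the translate into a fixed compact set) is sound. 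The only cosmetic remark is that the Dominated Convergence Theorem applies to sequences, so the limit $h\to 0$ should formally be taken along an arbitrary sequence $h_n\to 0$; this is routine and does not affect correctness.
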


\bibliographystyle{siamplain}
\bibliography{ftpbvp}
\end{nolinenumbers}
\end{document}